\newtheorem{theorem}{Theorem}[section]
\newtheorem{lemma}[theorem]{Lemma}
\newtheorem{definition}[theorem]{Definition}
\newtheorem{remark}[theorem]{Remark}
\numberwithin{equation}{section}
\newcommand{\R}{{\mathbb{R}}}
\newcommand{\Ze}{{\mathbb Z}}
\newcommand{\N}{{\mathbb{N}}}
\begin{document}

\begin{abstract}
Incremental stability is a property of dynamical and control systems, requiring the uniform asymptotic stability of every trajectory, rather than that of an equilibrium point or a particular time-varying trajectory. Similarly to stability, Lyapunov functions and contraction metrics play important roles in the study of incremental stability. In this paper, we provide characterizations and descriptions of incremental stability in terms of existence of coordinate-invariant notions of incremental Lyapunov functions and contraction metrics, respectively. Most design techniques providing controllers rendering control systems incrementally stable have two main drawbacks: they can only be applied to control systems in either parametric-strict-feedback or strict-feedback form, and they require these control systems to be smooth. In this paper, we propose a design technique that is applicable to larger classes of (not necessarily smooth) control systems. Moreover, we propose a recursive way of constructing contraction metrics (for smooth control systems) and incremental Lyapunov functions which have been identified as a key tool enabling the construction of finite abstractions of nonlinear control systems, the approximation of stochastic hybrid systems, source-code model checking for nonlinear dynamical systems and so on. The effectiveness of the proposed results in this paper is illustrated by synthesizing a controller rendering a non-smooth control system incrementally stable as well as constructing its finite abstraction, using the computed incremental Lyapunov function.
\end{abstract}

\title[Backstepping controller synthesis and characterizations of incremental stability]{Backstepping controller synthesis and characterizations of incremental stability}
\thanks{This research was sponsored in part by the grant NSF-CNS-0953994 and a contract from Toyota Corp.}

\author[Majid Zamani]{Majid Zamani$^1$} 
\author[Nathan van de Wouw]{Nathan van de Wouw$^2$} 
\author[Rupak Majumdar]{Rupak Majumdar$^3$}
\address{$^1$Department of Electrical Engineering\\
University of California at Los Angeles,
Los Angeles, CA 90095}
\email{zamani@ee.ucla.edu}
\urladdr{http://www.ee.ucla.edu/~zamani}
\address{$^2$Department of Mechanical Engineering, Eindhoven University of Technology, Eindhoven, The Netherlands.}
\email{N.v.d.Wouw@tue.nl}
\urladdr{ http://www.dct.tue.nl/vandewouw}
\address{$^3$Max Planck Institute for Software Systems, Kaiserslautern, Germany}
\email{rupak@mpi-sws.org}
\urladdr{http://www.cs.ucla.edu/~rupak}

\maketitle

\section{Introduction}
Incremental stability is a stronger property than stability for dynamical and control systems. In incremental stability, focus is on convergence of trajectories with respect to each other rather than with respect to an equilibrium point or a specific trajectory. Similarly to stability, Lyapunov functions play an important role in the study of incremental stability. In \cite{angeli}, Angeli proposed the notions of incremental Lyapunov function and incremental input-to-state Lyapunov function, and used these notions to provide characterizations of incremental global asymptotic stability ($\delta$-GAS) and incremental input-to-state stability ($\delta$-ISS). Notions of $\delta$-GAS, $\delta$-ISS and incremental Lyapunov functions, proposed in \cite{angeli}, are not coordinate invariant, in general. Since most of the controller design approaches benefit from changes of coordinates, in \cite{majid1}, the authors proposed different notions of $\delta$-GAS and $\delta$-ISS which are coordinate invariant. In \cite{majid4}, the authors proposed notions of incremental Lyapunov function and incremental input-to-state Lyapunov function that are coordinate invariant as well. We use these new notions of Lyapunov functions to fully characterize the notions of incremental (input-to-state) stability as proposed in \cite{majid1}. Furthermore, we provide sufficient conditions for coordinate invariant incremental (input-to-state) stability in the form of contraction metrics inspired by the work in \cite{aghannan}.

The number of applications of incremental stability has increased progressively in the past years. Examples include building explicit bounds on the region of attraction in phase-locking in the Kuramoto system \cite{franci}, modeling of nonlinear analog circuits \cite{bond}, robustness analysis of systems over finite alphabets \cite{tarraf}, global synchronization in networks of cyclic feedback systems \cite{hamadeh}, control reconfiguration of piecewise affine systems with actuator and sensor faults \cite{richter}, construction of symbolic models for nonlinear control systems \cite{pola,girard2,pola1}, and synchronization \cite{russo,stan}. Unfortunately, there are very few results available in the literature regarding the design of controllers enforcing incremental stability of the resulting closed-loop systems. Therefore, there is a growing need to develop design methods rendering control systems incrementally stable. 

Related works include controller designs for convergence of Lur'e-type systems \cite{pavlov,pavlov2} and a class of piecewise affine systems \cite{wouw} through the solution of linear matrix inequalities (LMIs). In contrast, the current paper does not require the solution of LMIs and the existence of controllers is always guaranteed for the class of systems under consideration. The quest for backstepping design approaches for incremental stability has received increasing attention recently. Recently obtained results include backstepping design approaches rendering parametric-strict-feedback\footnote{\label{kkk}See \cite{miroslav} for a definition.} form systems incrementally globally asymptotically stable\footnote{Understood in the sense of Definition \ref{delta_GAS1}.}  using the notion of contraction metrics in \cite{jouffroy1,sharma,sharma1}, and backstepping design approaches rendering strict-feedback\footnotemark[\getrefnumber{kkk}] form systems incrementally input-to-state stable\footnote{Understood in the sense of Definition \ref{dISS}.} using the notion of contraction metrics and incremental Lyapunov functions in \cite{majid1}, and \cite{majid4}, respectively. The results in \cite{pavlov} offer a backstepping design approach rendering a larger class of control systems than those in strict-feedback form input-to-state convergent, rather than incrementally input-to-state stable. We will build upon these results in \cite{pavlov} and extend those in the scope of incremental stability. The notion of (input-to-state) convergence requires existence of a trajectory which is bounded on the whole time axis which is not required in the case of incremental input-to-state stability. The notion of input-to-state convergence can not be applied to the results in \cite{pola,girard2,pola1},  which require the uniform global asymptotic stability of every trajectory rather than that of a particular trajectory that is bounded on the entire time axis. See \cite{majid1,pavlov1} for a brief comparison between the notions of convergent system and incremental stability.
  
Our techniques improve upon most of the existing backstepping techniques in three directions:
\begin{itemize}
\item[1)] by providing controllers enforcing not only incremental global asymptotic stability but also incremental input-to-state stability; 
\item[2)] by being applicable to larger classes of (non-smooth) control systems;
\item[3)] by providing a recursive way of constructing not only contraction metrics but also incremental Lyapunov functions.
\end{itemize}
In the first direction, our technique extends the results in \cite{jouffroy1,sharma,sharma1}, where only controllers enforcing incremental global asymptotic stability are designed. In the second direction, our technique improves the results in \cite{jouffroy1,sharma,sharma1}, which are only applicable to smooth parametric-strict-feedback form systems, and the results in \cite{majid1,majid4}, which are only applicable to smooth strict-feedback form systems. In the third direction, our technique extends the results in \cite{jouffroy1,sharma,sharma1,majid1}, where the authors only provide a recursive way of constructing contraction metrics, and the results in \cite{pavlov}, where the authors do not provide a way to construct Lyapunov functions characterizing the input-to-state convergence property induced by the controller. Note that the explicit availability of incremental Lyapunov functions is necessary in many applications. Examples include the construction of symbolic models for nonlinear control systems \cite{girard2,girard3,camara}, robust test generation of hybrid systems \cite{julius}, the approximation of stochastic hybrid systems \cite{julius1}, and source-code model checking for nonlinear dynamical systems \cite{kapinski}. Note that incremental Lyapunov functions can be used as bisimulation functions, recognized as a key tool for the analysis in \cite{julius,julius,kapinski}.

Our technical results are illustrated by designing an incrementally input-to-state stabilizing controller for an unstable non-smooth control system that does not satisfy the assumptions required in \cite{jouffroy1,sharma,sharma1,majid1,majid4}. Moreover, we construct a finite bisimilar abstraction for the resulting incrementally stable closed-loop system using the results in \cite{girard2}, which, however, apply only to incrementally stable systems with explicitly available incremental Lyapunov functions. When a finite abstraction is available, the synthesis of the controllers satisfying logic specifications expressed in linear temporal logic or automata on infinite strings can be easily reduced to a fixed-point computation over the finite-state abstraction \cite{paulo}. Note that satisfying those specifications is difficult or impossible to enforce with conventional control design methods. We synthesize another controller on top of the resulting incrementally stable closed-loop system satisfying some logic specification explained in details in the example section. 

The outline of the paper is as follows. Section 2 provides some mathematical preliminaries, the definition of the class of control systems that we consider in this paper, and stability notions. Section 3 provides characterizations of incremental stability in terms of existence of incremental Lyapunov functions and contraction metrics. In Section 4, we present the proposed backstepping design approach. An illustrative (non-smooth) example is discussed in details in Section 5. Finally, Section 6 concludes the paper.   

\section{Control Systems and Stability Notions}
\subsection{Notation} 
The symbols $\Ze$, $\N$, $\N_0$, $\mathbb{R}$, $\mathbb{R}^+$ and $\mathbb{R}_0^+$ denote the set of integer, positive integer, nonnegative integer, real, positive, and nonnegative real numbers, respectively. The symbols $I_m$, $0_{m\times{n}}$, and $0_m$ denote the identity and zero matrices in $\R^{m\times{m}}$ and $\R^{m\times{n}}$, and the zero vector in $\R^m$, respectively. Given a vector $x\in\mathbb{R}^{n}$, we denote by $x_{i}$ the \mbox{$i$--th} element of $x$, by $\vert{x}_i\vert$ the absolute value of $x_i$, and by $\Vert x\Vert$ the Euclidean norm of $x$; we recall that \mbox{$\Vert x\Vert=\sqrt{x_1^2+x_2^2+...+x_n^2}$} for $x\in\R^n$. Given a measurable function \mbox{$f:\mathbb{R}_{0}^{+}\rightarrow\mathbb{R}^n$}, the (essential) supremum of $f$ is denoted by $\Vert f\Vert_{\infty}$; we recall that \mbox{$\Vert f\Vert_{\infty}:=\text{(ess)sup}\{\Vert f(t)\Vert,t\geq0\}$} and $\Vert f\Vert_{[0,\tau)}:=\text{(ess)sup}\{\Vert f(t)\Vert,t\in[0,\tau)\}$. A function $f$ is essentially bounded if $\Vert{f}\Vert_{\infty}<\infty$. For a given time $\tau\in\mathbb{R}^+$, define $f_{\tau}$ so that $f_{\tau}(t)=f(t)$, for any $t\in[0,\tau)$, and $f_{\tau}(t)=0$ elsewhere; $f$ is said to be locally essentially bounded if for any $\tau\in\mathbb{R}^+$, $f_{\tau}$ is essentially bounded. A function $f:\R^n\rightarrow\R^+_0$ is called radially unbounded if $f(x)\rightarrow\infty$ as $\Vert{x}\Vert\rightarrow\infty$. The closed ball centered at $x\in\mathbb{R}^m$ with radius $\varepsilon$ is defined by $\mathcal{B}_\varepsilon(x)=\{y\in\mathbb{R}^m\,\,|\,\,\Vert{x}-y\Vert\leq\varepsilon\}$. A continuous function \mbox{$\gamma:\mathbb{R}_{0}^{+}\rightarrow\mathbb{R}_{0}^{+}$}, is said to belong to class $\mathcal{K}$ if it is strictly increasing and \mbox{$\gamma(0)=0$}; $\gamma$ is said to belong to class $\mathcal{K}_{\infty}$ if \mbox{$\gamma\in\mathcal{K}$} and $\gamma(r)\rightarrow\infty$ as $r\rightarrow\infty$. A continuous function \mbox{$\beta:\mathbb{R}_{0}^{+}\times\mathbb{R}_{0}^{+}\rightarrow\mathbb{R}_{0}^{+}$} is said to belong to class $\mathcal{KL}$ if, for each fixed $s$, the map $\beta(r,s)$ belongs to class $\mathcal{K}_{\infty}$ with respect to $r$ and, for each fixed nonzero $r$, the map $\beta(r,s)$ is decreasing with respect to $s$ and $\beta(r,s)\rightarrow0$ as \mbox{$s\rightarrow\infty$}. If $\phi:\R^n\to \R^n$ is a global diffeomorphism and \mbox{$G:\R^n\to \R^{n\times n}$} is a smooth map, the notation $\phi^*G:\R^n\to \R^{n\times n}$ denotes the smooth map \mbox{$(\phi^*G)(x)=(\frac{\partial \phi}{\partial x})^TG(\phi(x))(\frac{\partial \phi}{\partial x})$}. A Riemannian metric \mbox{$G:\mathbb{R}^n\rightarrow\mathbb{R}^{n\times n}$} is a smooth map on $\mathbb{R}^n$ such that, for any \mbox{$x\in\mathbb{R}^n$}, $G(x)$ is a symmetric positive definite matrix \cite{lee}. For any \mbox{$x\in\mathbb{R}^n$} and smooth functions \mbox{$I,J:\mathbb{R}^n\rightarrow\mathbb{R}^n$}, one can define the scalar function \mbox{$\langle{I},J\rangle_G$} as $I^T(x)G(x)J(x)$. We will still use the notation $\langle{I},J\rangle_{G}$ to denote $I^TGJ$ even if $G$ does not represent a Riemannian metric. A function \mbox{$\mathbf{d}:\R^n\times \R^n\rightarrow\mathbb{R}_{0}^{+}$} is a metric on $\R^n$ if for any $x,y,z\in\R^n$, the following three conditions are satisfied: i) $\mathbf{d}(x,y)=0$ if and only if $x=y$; ii) $\mathbf{d}(x,y)=\mathbf{d}(y,x)$; and iii) $\mathbf{d}(x,z)\leq\mathbf{d}(x,y)+\mathbf{d}(y,z)$. We use the pair $\left(\R^n,\mathbf{d}\right)$ to denote a metric space $\R^n$ equipped with the metric $\mathbf{d}$. We use the notation $\mathbf{d}_G$ to denote the Riemannian distance function provided by the Riemannian metric $G$, as defined for example in \cite{lee}. We refer to the proof of Lemma \ref{lemma20} in the paper for the definition of $\mathbf{d}_G$. For a set $\mathcal{A}\subseteq\R^n$, a metric $\mathbf{d}$, and any $x\in\R^n$, we abuse the notation by using $\mathbf{d}(x,\mathcal{A})$ to denote the point-to-set distance, defined by $\mathbf{d}(x,\mathcal{A})=\inf_{y\in\mathcal{A}}\mathbf{d}(x,y)$. A function $f$ is said to be smooth if it is an infinitely differentiable function of its arguments. Given measurable functions \mbox{$f:\mathbb{R}_{0}^{+}\rightarrow\mathbb{R}^n$} and \mbox{$g:\mathbb{R}_{0}^{+}\rightarrow\mathbb{R}^n$}, we define $\mathbf{d}(f,g)_\infty:=\text{(ess)sup}\{\mathbf{d}(f(t),g(t)),t\geq0\}$ and $\mathbf{d}(f,g)_{[0,\tau)}:=\text{(ess)sup}\{\mathbf{d}(f(t),g(t)),t\in[0,\tau)\}$.

\subsection{Control Systems\label{II.B}}
The class of control systems that we consider in this paper is formalized in
the following definition.
\begin{definition}
\label{Def_control_sys}A \textit{control system} is a quadruple:
\[
\Sigma=(\mathbb{R}^{n},\mathsf{U},\mathcal{U},f),
\]
where:
\begin{itemize}
\item $\mathbb{R}^{n}$ is the state space;
\item $\mathsf{U}\subseteq\mathbb{R}^{m}$ is the input set;
\item $\mathcal{U}$ is the set of all measurable, locally essentially bounded functions of time from intervals of the form \mbox{$]a,b[\subseteq\mathbb{R}$} to $\mathsf{U}$ with $a<0$ and $b>0$; 
\item \mbox{$f:\mathbb{R}^{n}\times \mathsf{U}\rightarrow\mathbb{R}^{n}$} is a continuous map
satisfying the following Lipschitz assumption: for every compact set
\mbox{$Q\subset\mathbb{R}^{n}$}, there exists a constant $Z\in\mathbb{R}^+$ such that $\Vert
f(x,u)-f(y,u)\Vert\leq Z\Vert x-y\Vert$ for all $x,y\in Q$ and all $u\in \mathsf{U}$.
\end{itemize}
\end{definition}

A curve \mbox{$\xi:]a,b[\rightarrow\mathbb{R}^{n}$} is said to be a
\textit{trajectory} of $\Sigma$ if there exists $\upsilon\in\mathcal{U}$
satisfying:
\begin{equation}\nonumber
\dot{\xi}(t)=f\left(\xi(t),\upsilon(t)\right),
\end{equation}
for almost all $t\in$ $]a,b[$. Although we have defined trajectories over open domains, we shall refer to trajectories \mbox{${\xi:}[0,t]\rightarrow\mathbb{R}^{n}$} defined on closed domains $[0,t],$ $t\in\mathbb{R}^{+}$ with the understanding of the existence of a trajectory \mbox{${\xi}^{\prime}:]a,b[\rightarrow\mathbb{R}^{n}$} such that \mbox{${\xi}={\xi}^{\prime}|_{[0,t]}$} with $a<0$ and $b>t$. We also write $\xi_{x\upsilon}(t)$ to denote the point reached at time $t$
under the input $\upsilon$ from initial condition $x=\xi_{x\upsilon}(0)$; the point $\xi_{x\upsilon}(t)$ is
uniquely determined, since the assumptions on $f$ ensure existence and
uniqueness of trajectories \cite{sontag1}. 

A control system $\Sigma$ is said to be forward complete if every trajectory is defined on an interval of the form $]a,\infty[$. Sufficient and necessary conditions for a system to be forward complete can be found in \cite{sontag}. A control system $\Sigma$ is said to be smooth if $f$ is smooth.

\subsection{Stability notions}
Here, we recall the notions of incremental global asymptotic stability ($\delta_\exists$-GAS) and incremental input-to-state stability ($\delta_\exists$-ISS), presented in \cite{majid1}.

\begin{definition}[\cite{majid1}]
\label{delta_GAS1}
A control system $\Sigma=(\mathbb{R}^{n},\mathsf{U},\mathcal{U},f)$ is incrementally globally asymptotically stable ($\delta_\exists$-GAS) if it is forward complete and there exist a metric $\mathbf{d}$ and a $\mathcal{KL}$ function $\beta$ such that for any $t\in{\mathbb{R}_0^+}$, any $x,x'\in{\mathbb{R}^n}$ and any $\upsilon\in\mathcal{U}$ the following condition is satisfied:
\begin{equation}
\mathbf{d}\left(\xi_{x\upsilon}(t),\xi_{x'\upsilon}(t)\right) \leq\beta\left(\mathbf{d}\left(x,x'\right),t\right). \label{delta_GAS}%
\end{equation}
\end{definition}

As defined in~\cite{angeli}, $\delta$-GAS requires the metric $\mathbf{d}$ to be the Euclidean metric. However, Definition~\ref{delta_GAS1} only requires the existence of a metric. We note that while $\delta$-GAS is not generally invariant under changes of coordinates, $\delta_\exists$-GAS is. 
When the origin is an equilibrium point for $\Sigma$, with $\upsilon(t)=0$ for all $t\in\R^+_0$, and the map $\psi:\R^n\to \R_0^+$, defined by $\psi(\cdot)=\mathbf{d}(\cdot,0)$, is continuous\footnote{\label{continuity}Here, continuity is understood with respect to the Euclidean metric.} and radially unbounded, both $\delta_\exists$-GAS and $\delta$-GAS imply global asymptotic stability.

\begin{definition}[\cite{majid1}]
\label{dISS}
A control system $\Sigma=(\mathbb{R}^{n},\mathsf{U},\mathcal{U},f)$ is incrementally input-to-state stable ($\delta_\exists$-ISS) if it is forward complete and there exist a metric $\mathbf{d}$, a $\mathcal{KL}$ function $\beta$, and a $\mathcal{K}_{\infty}$ function $\gamma$ such that for any $t\in{\mathbb{R}_0^+}$, any $x,x'\in{\mathbb{R}^n}$, and any $\upsilon,\upsilon'\in\mathcal{U}$ the following condition is satisfied:
\begin{equation}
\mathbf{d}\left(\xi_{x\upsilon}(t),\xi_{x'{\upsilon}'}(t)\right) \leq\beta\left(\mathbf{d}\left(x,x'\right),t\right)+\gamma\left(\left\Vert{\upsilon}-{\upsilon}'\right\Vert_{\infty}\right). \label{delta_ISS}%
\end{equation}
\end{definition}

By observing (\ref{delta_GAS}) and (\ref{delta_ISS}), it is readily seen that $\delta_\exists$-ISS implies $\delta_\exists$-GAS while the converse is not true in general. Moreover, whenever the metric $\mathbf{d}$ is the Euclidean metric, $\delta_\exists$-ISS becomes $\delta$-ISS as defined in~\cite{angeli}. We note that while $\delta$-ISS is not generally invariant under changes of coordinates, $\delta_\exists$-ISS is. When the origin is an equilibrium point for $\Sigma$, with $\upsilon(t)=0$ for all $t\in\R^+_0$, and the map $\psi:\R^n\to \R_0^+$, defined by $\psi(\cdot)=\mathbf{d}(\cdot,0)$, is continuous\footnotemark[\getrefnumber{continuity}] and radially unbounded, both $\delta_\exists$-ISS and $\delta$-ISS imply input-to-state stability \cite{sontag3}.

\section{Characterizations of Incremental Stability}
This section contains characterizations and descriptions of $\delta_\exists$-GAS and $\delta_\exists$-ISS in terms of existence of incremental Lyapunov functions and contraction metrics, respectively. We note that only the sufficiency part of Lyapunov characterizations of $\delta_\exists$-GAS and $\delta_\exists$-ISS were presented in \cite{majid4}. In Section \ref{back}, we will use such incremental Lyapunov functions and contraction metrics to synthesize controllers rendering closed-loop systems incrementally stable.

\subsection{Incremental Lyapunov function characterizations}
We start by recalling the notions of an incremental global asymptotic stability ($\delta_\exists$-GAS) Lyapunov function and an incremental input-to-state stability ($\delta_\exists$-ISS) Lyapunov function, presented in \cite{majid4}.

\begin{definition}[\cite{majid4}]
\label{delta_GAS_Lya}
Consider a control system $\Sigma=(\mathbb{R}^n,\mathsf{U},\mathcal{U},f)$ and a smooth function \mbox{$V:\mathbb{R}^n\times\mathbb{R}^n\rightarrow\mathbb{R}_0^+$}. Function $V$ is called a $\delta_\exists$-GAS Lyapunov function for $\Sigma$, if there exist a metric $\mathbf{d}$, $\mathcal{K}_{\infty}$ functions $\underline{\alpha}$, $\overline{\alpha}$, and $\kappa\in\mathbb{R}^+$ such that:
\begin{itemize}
\item[(i)] for any $x,x'\in\mathbb{R}^n$,\\
$\underline{\alpha}(\mathbf{d}(x,x'))\leq{V}(x,x')\leq\overline{\alpha}(\mathbf{d}(x,x'))$;
\item[(ii)] for any $x,x'\in\mathbb{R}^n$ and any $u\in\mathsf{U}$,\\
$\frac{\partial{V}}{\partial{x}}f(x,u)+\frac{\partial{V}}{\partial{x'}}f(x',u)\leq -\kappa V(x,x')$.
\end{itemize}
Function $V$ is called a $\delta_\exists$-ISS Lyapunov function for $\Sigma$, if there exist a metric $\mathbf{d}$, $\mathcal{K}_{\infty}$ functions $\underline{\alpha}$, $\overline{\alpha}$, $\sigma$, and $\kappa\in\mathbb{R}^+$ satisfying conditions (i) and:
\begin{itemize}
\item[(iii)] for any $x,x'\in\mathbb{R}^n$ and for any $u,u'\in\mathsf{U}$,\\
\mbox{$\frac{\partial{V}}{\partial{x}}f(x,u)+\frac{\partial{V}}{\partial{x'}}f(x',u')\leq -\kappa V(x,x')+\sigma(\Vert{u}-u'\Vert)$}.
\end{itemize}
\end{definition}

To provide characterizations of $\delta_\exists$-ISS (resp. $\delta_\exists$-GAS) in terms of the existence of $\delta_\exists$-ISS (resp. $\delta_\exists$-GAS) Lyapunov functions, we need the following technical results.

Here, we introduce the following definition which was inspired by the notion of uniform global asymptotic stability (UGAS) with respect to sets, presented in \cite{lin}.
\begin{definition}
A control system $\Sigma=(\mathbb{R}^n,\mathsf{U},\mathcal{U},f)$ is uniformly globally asymptotically stable (U$_\exists$GAS) with respect to a set $\mathcal{A}\subseteq\R^n$ if it is forward complete and there exist a metric $\mathbf{d}$, and a $\mathcal{KL}$ function $\beta$ such that for any $t\in\R_0^+$, any $x\in\R^n$ and any $\upsilon\in\mathcal{U}$, the following condition is satisfied:
\begin{equation}\label{UGAS}
\mathbf{d}(\xi_{x\upsilon}(t),\mathcal{A})\leq\beta(\mathbf{d}(x,\mathcal{A}),t).
\end{equation} 
\end{definition}

We now introduce the following definition which was inspired by the notion of uniform global asymptotic stability (UGAS) Lyapunov functions in \cite{lin}.
\begin{definition}\label{UGAS_Lya}
Consider a control system $\Sigma=(\mathbb{R}^n,\mathsf{U},\mathcal{U},f)$, a set $\mathcal{A}\subseteq\R^n$, and a smooth function $V:\R^n\rightarrow\R_0^+$. Function $V$ is called a U$_\exists$GAS Lyapunov function, with respect to $\mathcal{A}$, for $\Sigma$, if there exist a metric $\mathbf{d}$, $\mathcal{K}_\infty$ functions $\underline\alpha$, $\overline\alpha$, and $\kappa\in\R^+$ such that:
\begin{itemize}
\item[(i)] for any $x\in\R^n$,\\
$\underline{\alpha}(\mathbf{d}(x,\mathcal{A}))\leq{V}(x)\leq\overline{\alpha}(\mathbf{d}(x,\mathcal{A}))$;
\item[(ii)] for any $x\in\R^n$ and any $u\in\mathsf{U}$,\\
$\frac{\partial{V}}{\partial{x}}f(x,u)\leq -\kappa V(x)$.
\end{itemize}
\end{definition}

The following theorem characterizes U$_\exists$GAS in terms of the existence of a U$_\exists$GAS Lyapunov function.
\begin{theorem}\label{theorem10}
Consider a control system $\Sigma=(\mathbb{R}^n,\mathsf{U},\mathcal{U},f)$ and a set $\mathcal{A}\subseteq\R^n$. If $\mathsf{U}$ is compact and $\mathbf{d}$ is a metric such that the function $\psi(\cdot)=\mathbf{d}(\cdot,y)$ is continuous\footnotemark[\getrefnumber{continuity}] for any $y\in\R^n$ then the following statements are equivalent: 
\begin{itemize}
\item[(1)] $\Sigma$ is forward complete and there exists a U$_\exists$GAS Lyapunov function with respect to $\mathcal{A}$, equipped with the metric ${\mathbf{d}}$.
\item[(2)] $\Sigma$ is U$_\exists$GAS with respect to $\mathcal{A}$, equipped with the metric $\mathbf{d}$.
\end{itemize}
\end{theorem}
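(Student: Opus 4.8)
The plan is to prove the two implications separately, mirroring the classical Lyapunov theory for UGAS with respect to sets from \cite{lin}, but carried out in the metric space $(\R^n,\mathbf{d})$ rather than the Euclidean one. Throughout, the standing hypothesis that $\psi(\cdot)=\mathbf{d}(\cdot,y)$ is continuous for every $y$ (with respect to the Euclidean topology) is what lets us pass freely between $\mathbf{d}$-estimates and the usual analytic machinery (continuity of trajectories in time, compactness arguments, etc.); compactness of $\mathsf U$ is what will give us the uniformity in the input needed for the comparison lemma. Note that condition (ii) in Definition \ref{UGAS_Lya} is required to hold for \emph{every} $u\in\mathsf U$, so there is no sup/inf over inputs to worry about here.

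\textbf{(1) $\Rightarrow$ (2).} Assume $\Sigma$ is forward complete and admits a U$_\exists$GAS Lyapunov function $V$ with respect to $\mathcal A$, with associated metric $\mathbf d$, functions $\underline\alpha,\overline\alpha\in\mathcal K_\infty$ and $\kappa\in\R^+$. Fix $x\in\R^n$ and $\upsilon\in\mathcal U$ and write $\xi(t)=\xi_{x\upsilon}(t)$. The map $t\mapsto V(\xi(t))$ is absolutely continuous (composition of the smooth $V$ with the absolutely continuous $\xi$), and for almost all $t$ its derivative equals $\frac{\partial V}{\partial x}(\xi(t))\,f(\xi(t),\upsilon(t))\le -\kappa V(\xi(t))$ by (ii). The comparison lemma then yields $V(\xi(t))\le e^{-\kappa t}V(x)$ for all $t\ge 0$ in the maximal interval of existence; forward completeness makes this all of $\R_0^+$. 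Combining with (i),
\[
\underline\alpha\bigl(\mathbf d(\xi_{x\upsilon}(t),\mathcal A)\bigr)\le V(\xi(t))\le e^{-\kappa t}V(x)\le e^{-\kappa t}\,\overline\alpha\bigl(\mathbf d(x,\mathcal A)\bigr),
\]
so $\mathbf d(\xi_{x\upsilon}(t),\mathcal A)\le \underline\alpha^{-1}\!\bigl(e^{-\kappa t}\overline\alpha(\mathbf d(x,\mathcal A))\bigr)=:\beta(\mathbf d(x,\mathcal A),t)$. A routine check shows $\beta\in\mathcal{KL}$ (it is $\mathcal K_\infty$ in the first argument for fixed $t$ because $\underline\alpha^{-1},\overline\alpha\in\mathcal K_\infty$, and it decreases to $0$ in $t$). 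Hence $\Sigma$ is U$_\exists$GAS with respect to $\mathcal A$ equipped with $\mathbf d$.

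\textbf{(2) $\Rightarrow$ (1).} This is the substantive direction. Suppose $\Sigma$ is U$_\exists$GAS with respect to $\mathcal A$ via the metric $\mathbf d$ and some $\beta\in\mathcal{KL}$; forward completeness is part of the hypothesis. Define the candidate
\[
V(x)\;=\;\sup_{\upsilon\in\mathcal U}\ \sup_{t\ge 0}\ \rho\bigl(\mathbf d(\xi_{x\upsilon}(t),\mathcal A)\bigr)\,e^{\lambda t},
\]
where $\rho\in\mathcal K_\infty$ and $\lambda>0$ are chosen, following the Sontag-style construction used in \cite{lin}, so that $\rho(\beta(r,t))e^{\lambda t}\to 0$ as $t\to\infty$ uniformly for $r$ in compacts (possible by a standard lemma on $\mathcal{KL}$ functions: one splits $\beta$ as $\beta(r,t)\le \sigma_1(r)\sigma_2(t)$ or uses Sontag's lemma directly). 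With this choice the double supremum is finite for every $x$: indeed $\rho(\mathbf d(\xi_{x\upsilon}(t),\mathcal A))e^{\lambda t}\le \rho(\beta(\mathbf d(x,\mathcal A),t))e^{\lambda t}$, and the right-hand side is bounded in $t$ independently of $\upsilon$. Taking $t=0$ gives $V(x)\ge\rho(\mathbf d(x,\mathcal A))$, and the displayed bound gives $V(x)\le \sup_{t\ge0}\rho(\beta(\mathbf d(x,\mathcal A),t))e^{\lambda t}=:\overline\alpha(\mathbf d(x,\mathcal A))$ with $\overline\alpha\in\mathcal K_\infty$ (monotone, vanishing at $0$, and finite by the choice of $\lambda$); so (i) holds with $\underline\alpha=\rho$. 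For the decrease condition, use the semigroup property $\xi_{x\upsilon}(s+t)=\xi_{\xi_{x\upsilon}(s)\,\upsilon(\cdot+s)}(t)$ together with $\upsilon(\cdot+s)\in\mathcal U$ to get, for any $h>0$,
\[
V(\xi_{xu}(h))\;\ge\; e^{-\lambda h}\,\sup_{t\ge h}\rho(\mathbf d(\xi_{xu}(t),\mathcal A))e^{\lambda t},
\]
and since the admissible inputs $u\in\mathsf U$ (viewed as constant controls over $[0,h]$) realize the worst case up to the outer sup, one derives $V(\xi_{xu}(h))\le e^{-\lambda h}V(x)$ for every $u\in\mathsf U$. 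Dividing by $h$ and letting $h\downarrow 0$ gives $\frac{\partial V}{\partial x}(x)f(x,u)\le -\lambda V(x)$, i.e. (ii) with $\kappa=\lambda$, \emph{provided $V$ is differentiable.}

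\textbf{The main obstacle: smoothness of $V$.} The supremum formula produces at best a locally Lipschitz, or merely continuous, function, whereas Definition \ref{UGAS_Lya} demands a smooth $V$. The remedy, exactly as in \cite{lin} (and ultimately Lin--Sontag--Wang), is a two-step regularization: first argue $V$ is continuous and even locally Lipschitz away from $\mathcal A$ — here continuity of $x\mapsto\mathbf d(x,\mathcal A)$ (a consequence of the standing hypothesis, since $\mathbf d(x,\mathcal A)=\inf_{y\in\mathcal A}\psi_y(x)$ is an infimum of continuous functions and one shows it is $\mathbf d$-Lipschitz hence Euclidean-continuous) and continuous dependence of $\xi_{x\upsilon}(t)$ on $x$ uniformly over compact $t$-intervals and over $\upsilon\in\mathcal U$ (the latter uniformity is where \emph{compactness of $\mathsf U$} enters) are used; then smooth $V$ by a partition-of-unity mollification that preserves the inequalities (i)--(ii) up to adjusting the $\mathcal K_\infty$ functions and shrinking $\kappa$ to $\kappa/2$, being careful near $\mathcal A$ where $\underline\alpha(\mathbf d(\cdot,\mathcal A))$ degenerates. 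I expect this smoothing step — and the verification that the mollified function still satisfies the infinitesimal decrease condition along $f(\cdot,u)$ for all $u\in\mathsf U$ simultaneously — to be the delicate part; everything else is the comparison lemma and bookkeeping with $\mathcal K_\infty/\mathcal{KL}$ estimates. One cites \cite{lin} for the technical core of this construction, pointing out that the only change is that all distances are measured with $\mathbf d$ instead of the Euclidean norm, which is legitimate precisely because of the continuity hypothesis on $\psi$.
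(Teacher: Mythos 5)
Your implication (1)$\Rightarrow$(2) is correct and is just the comparison lemma; no complaints there. The gap is in (2)$\Rightarrow$(1), exactly at the point you yourself flag as delicate and then dispose of by citing \cite{lin} with the remark that ``the only change is that all distances are measured with $\mathbf{d}$ instead of the Euclidean norm.'' That transfer is not legitimate. The regularization machinery of \cite{lin} (Lin--Sontag--Wang) hinges on the fact that the Euclidean point-to-set distance is globally Lipschitz and proper: this is what lets one prove that the value function $V(x)=\sup_{\upsilon}\sup_{t\ge0}\rho\bigl(\mathbf{d}(\xi_{x\upsilon}(t),\mathcal{A})\bigr)\mathsf{e}^{\lambda t}$ is \emph{locally Lipschitz} (via Gronwall-type dependence of trajectories on initial conditions, uniformly over the compact input set), and local Lipschitz continuity is in turn what makes the decrease condition survive mollification. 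Under the hypotheses of Theorem \ref{theorem10}, $\mathbf{d}(\cdot,\mathcal{A})$ is $1$-Lipschitz only with respect to $\mathbf{d}$ itself; with respect to the Euclidean topology it is merely continuous (and need not be proper), so your candidate $V$ is a priori only continuous --- indeed, as a supremum of continuous functions it is only lower semicontinuous until one does extra work --- and the ``partition-of-unity mollification \ldots shrinking $\kappa$ to $\kappa/2$'' step has no justification at this level of regularity. Your dynamic-programming estimate $V(\xi_{xu}(h))\le \mathsf{e}^{-\lambda h}V(x)$ and the bounds in (i) are fine, but they only give a decrease along trajectories of a nonsmooth function, which is not what Definition \ref{UGAS_Lya} asks for; the smoothing is the entire content of the hard direction, and it is precisely the part you have not supplied.

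This is also exactly why the paper does not follow the route through \cite{lin}. Its proof consists of two moves: first, a sequence/triangle-inequality argument showing that $\phi(\cdot)=\mathbf{d}(\cdot,\mathcal{A})$ is continuous with respect to the Euclidean metric (using the standing continuity hypothesis on $\psi$); second, an appeal to Theorem 1 of \cite{teel} (Teel--Praly), a converse Lyapunov theorem for class-$\mathcal{KL}$ estimates of the form $\omega_1(\xi(t))\le\beta(\omega_2(x),t)$ in which $\omega_1,\omega_2$ are only required to be \emph{continuous} positive semidefinite functions, applied with $\omega_1=\omega_2=\phi$; compactness of $\mathsf{U}$ is what puts the system within the hypotheses of that theorem. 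That result delivers both directions of the equivalence, including the smooth $V$, at exactly the regularity available here. To repair your argument you would either have to reconstruct the Teel--Praly regularity and smoothing analysis for a merely continuous $\mathbf{d}(\cdot,\mathcal{A})$ --- substantially more than ``bookkeeping'' --- or replace your converse construction by a citation of Theorem 1 in \cite{teel}, which is what the paper does.
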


\begin{proof}
First we show that the function $\phi(\cdot)=\mathbf{d}(\cdot,\mathcal{A})$ is a continuous function with respect to the Euclidean metric. Assume $\{x_n\}_{n=1}^\infty$ is a converging sequence in $\R^n$ with respect to the Euclidean metric, implying that $\Vert x_n-x^*\Vert\rightarrow0$ as $n\rightarrow\infty$ for some $x^*\in\R^n$. By the triangle inequality, we have:
\begin{equation}\label{ineq1}
\mathbf{d}\left(x^*,y\right)\leq\mathbf{d}\left(x^*,x_n\right)+\mathbf{d}\left(y,x_n\right),
\end{equation}
for any $n\in\N$ and any $y\in\mathcal{A}$. Using inequality (\ref{ineq1}), we obtain:
\begin{align}
\label{ineq2}
\phi\left(x^*\right)=\mathbf{d}(x^*,\mathcal{A})=\inf_{y\in\mathcal{A}}\mathbf{d}\left(x^*,y\right)&\leq\inf_{y\in\mathcal{A}}\left\{\mathbf{d}\left(x^*,x_n\right)+\mathbf{d}\left(y,x_n\right)\right\}\\\notag&=\mathbf{d}\left(x^*,x_n\right)+\inf_{y\in\mathcal{A}}\mathbf{d}\left(y,x_n\right)=\mathbf{d}\left(x^*,x_n\right)+\phi\left(x_n\right),
\end{align}
for any $n\in\N$. Using inequality (\ref{ineq2}) and the continuity assumption on $\mathbf{d}$, implying that $\lim_{n\rightarrow\infty}\mathbf{d}\left(x^*,x_n\right)=\mathbf{d}(x^*,x^*)=0$, we obtain for any $n\in\N$:
\begin{equation}\label{ineq3}
\phi\left(x^*\right)\leq\inf_{m\geq{n}}\left\{\mathbf{d}\left(x^*,x_m\right)+\phi\left(x_m\right)\right\}\Rightarrow\phi\left(x^*\right)\leq\lim_{n\rightarrow\infty}\inf\phi\left(x_n\right),
\end{equation}
where limit inferior exists because a lower bounded sequence of real numbers always admit a greatest lower bound \cite{radulescu}. By doing the same analysis, we have:
\begin{equation}\label{ineq4}
\phi\left(x^*\right)\geq\lim_{n\rightarrow\infty}\sup\phi\left(x_n\right),
\end{equation}
where limit superior exists because an upper bounded sequence of real numbers always admit a lowest upper bound \cite{radulescu}. Using inequalities (\ref{ineq3}) and (\ref{ineq4}), one obtains:
\begin{equation}\nonumber
\phi\left(x^*\right)=\lim_{n\rightarrow\infty}\phi\left(x_n\right),
\end{equation}
implying that $\phi$ is a continuous function with respect to the Euclidean metric. Since $\phi(\cdot)=\mathbf{d}(\cdot,\mathcal{A})$ is a continuous, positive semi-definite function, by choosing \mbox{$\omega_1(\cdot)=\omega_2(\cdot)=\mathbf{d}(\cdot,\mathcal{A})$} in Theorem 1 in \cite{teel}, the proof completes.
\end{proof}

Before showing the main results, we need the following technical lemma, inspired by Lemma 2.3 in \cite{angeli}.

\begin{lemma}\label{lemma10}
Consider a control system \mbox{$\Sigma=(\mathbb{R}^n,\mathsf{U},\mathcal{U},f)$}. If $\Sigma$ is $\delta_\exists$-GAS, then the control system $\widehat\Sigma=(\R^{2n},\mathsf{U},\mathcal{U},\widehat{f})$, where $\widehat{f}(\zeta,\upsilon)=\left[f(\xi_1,\upsilon)^T,f(\xi_2,\upsilon)^T\right]^T$, and $\zeta=\left[\xi_1^T,\xi_2^T\right]^T$, is U$_\exists$GAS with respect to the diagonal set $\Delta$, defined by:
\begin{equation}\label{delta}
\Delta=\left\{z\in\R^{2n}\\|\\\exists x\in\R^n:z=\left[x^T,x^T\right]^T\right\}.
\end{equation}
\end{lemma}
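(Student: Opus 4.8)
The plan is to translate the incremental stability of $\Sigma$ directly into the U$_\exists$GAS property of $\widehat\Sigma$ with respect to $\Delta$, by constructing a suitable metric on $\R^{2n}$ from the metric $\mathbf{d}$ witnessing $\delta_\exists$-GAS. First I would invoke Definition \ref{delta_GAS1}: since $\Sigma$ is $\delta_\exists$-GAS, it is forward complete, and there exist a metric $\mathbf{d}$ on $\R^n$ and a $\mathcal{KL}$ function $\beta$ such that $\mathbf{d}(\xi_{x\upsilon}(t),\xi_{x'\upsilon}(t))\leq\beta(\mathbf{d}(x,x'),t)$ for all $t$, $x,x'$, $\upsilon$. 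Forward completeness of $\widehat\Sigma$ is immediate, because a trajectory of $\widehat\Sigma$ from $\zeta=[\xi_1^T,\xi_2^T]^T$ under input $\upsilon$ is exactly the pair $[\xi_{\xi_1\upsilon}(t)^T,\xi_{\xi_2\upsilon}(t)^T]^T$, and each component is defined for all positive time.

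Next I would define a metric $\widehat{\mathbf{d}}$ on $\R^{2n}$ by $\widehat{\mathbf{d}}(z,w):=\mathbf{d}(z_1,w_1)+\mathbf{d}(z_2,w_2)$, where $z=[z_1^T,z_2^T]^T$ and $w=[w_1^T,w_2^T]^T$; the three metric axioms for $\widehat{\mathbf{d}}$ follow coordinatewise from those for $\mathbf{d}$. The key computation is to identify $\widehat{\mathbf{d}}(z,\Delta)$ with something controlled by $\mathbf{d}(z_1,z_2)$: on one hand, taking the diagonal point $[z_1^T,z_1^T]^T\in\Delta$ gives $\widehat{\mathbf{d}}(z,\Delta)\leq\mathbf{d}(z_1,z_2)$; on the other hand, for any diagonal point $[x^T,x^T]^T$, the triangle inequality gives $\mathbf{d}(z_1,z_2)\leq\mathbf{d}(z_1,x)+\mathbf{d}(x,z_2)=\widehat{\mathbf{d}}(z,[x^T,x^T]^T)$, so taking the infimum over $x$ yields $\mathbf{d}(z_1,z_2)\leq\widehat{\mathbf{d}}(z,\Delta)$. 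Hence $\widehat{\mathbf{d}}(z,\Delta)=\mathbf{d}(z_1,z_2)$ exactly.

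With this identity in hand, the estimate is direct: for a trajectory $\widehat\xi_{\zeta\upsilon}$ of $\widehat\Sigma$ with $\zeta=[\xi_1^T,\xi_2^T]^T$, we have
\[
\widehat{\mathbf{d}}\bigl(\widehat\xi_{\zeta\upsilon}(t),\Delta\bigr)=\mathbf{d}\bigl(\xi_{\xi_1\upsilon}(t),\xi_{\xi_2\upsilon}(t)\bigr)\leq\beta\bigl(\mathbf{d}(\xi_1,\xi_2),t\bigr)=\beta\bigl(\widehat{\mathbf{d}}(\zeta,\Delta),t\bigr),
\]
which is precisely inequality (\ref{UGAS}) with the same $\mathcal{KL}$ function $\beta$ and the metric $\widehat{\mathbf{d}}$. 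Together with forward completeness, this shows $\widehat\Sigma$ is U$_\exists$GAS with respect to $\Delta$.

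I do not expect a serious obstacle here; the statement is essentially a bookkeeping lemma that repackages $\delta_\exists$-GAS as a set-stability property on the doubled system, and the only point requiring a small argument is the identity $\widehat{\mathbf{d}}(z,\Delta)=\mathbf{d}(z_1,z_2)$, which follows from the triangle inequality as above. One minor subtlety worth noting is that the choice of product metric $\widehat{\mathbf{d}}$ is not canonical — any equivalent combination (e.g.\ $\max$ instead of sum) would work and give the same distance-to-$\Delta$ — but using the sum keeps the verification of the metric axioms and the triangle-inequality step uniform. If later results need the continuity hypothesis on $\mathbf{d}$ from Theorem \ref{theorem10}, one should also observe that continuity of $x\mapsto\mathbf{d}(x,y)$ transfers to continuity of $\widehat{\mathbf{d}}(\cdot,w)$, but this is not needed for the statement of the lemma itself.
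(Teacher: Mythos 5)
Your proof is correct and follows essentially the same route as the paper's: the same sum metric $\widehat{\mathbf{d}}$ on $\R^{2n}$, the same identity $\widehat{\mathbf{d}}(z,\Delta)=\mathbf{d}(z_1,z_2)$ established via the diagonal point and the triangle inequality, and the same direct estimate transferring (\ref{delta_GAS}) into (\ref{UGAS}). Your explicit note on forward completeness of $\widehat\Sigma$ is a harmless addition the paper leaves implicit.
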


\begin{proof}
Since $\Sigma$ is $\delta_\exists$-GAS, there exists a metric \mbox{$\mathbf{d}:\R^n\times\R^n\rightarrow\R_0^+$} such that property (\ref{delta_GAS}) is satisfied. Now we define a new metric $\widehat{\mathbf{d}}:\R^{2n}\times\R^{2n}\rightarrow\R^+_0$ by: 
\begin{equation}\label{metric1}
\widehat{\mathbf{d}}(z,z')=\mathbf{d}(x_1,x'_1)+\mathbf{d}(x_2,x'_2),
\end{equation}
for any $z=\left[{x_1}^T,{x_2}^T\right]^T$, $z'=\left[{x'_1}^T,{x'_2}^T\right]^T\in\R^{2n}$. It can be readily checked that $\widehat{\mathbf{d}}$ satisfies all three conditions of a metric. Now we show that $\widehat{\mathbf{d}}(z,\Delta)$, for any $z=\left[x_1^T,x_2^T\right]^T\in\R^{2n}$, is proportional to $\mathbf{d}(x_1,x_2)$ that will be exploited later in the proof. We have:
\begin{align}
\label{ineq5}
\widehat{\mathbf{d}}(z,\Delta)=\inf_{z'\in\Delta}\widehat{\mathbf{d}}(z,z')&=\inf_{x'\in\R^n}\widehat{\mathbf{d}}\left(\left[{\begin{array}{c}x_1\\x_2\\\end{array}}\right],\left[{\begin{array}{c}x'\\x'\\\end{array}}\right]\right)=\inf_{x'\in\R^n}\left\{\mathbf{d}(x_1,x')+\mathbf{d}(x_2,x')\right\}\\\notag&\leq\inf_{x'=x_1}\left\{\mathbf{d}(x_1,x')+\mathbf{d}(x_2,x')\right\}=\mathbf{d}(x_1,x_1)+\mathbf{d}(x_1,x_2)=\mathbf{d}(x_1,x_2).
\end{align}
Since $\mathbf{d}$ is a metric, by using the triangle inequality, we have: $\mathbf{d}(x_1,x_2)\leq\mathbf{d}(x_1,x')+\mathbf{d}(x_2,x')$ for any $x'\in\R^n$, implying that: 
\begin{equation}\label{ineq10}
\mathbf{d}(x_1,x_2)\leq\inf_{x'\in\R^n}\left\{\mathbf{d}(x_1,x')+\mathbf{d}(x_2,x')\right\}=\widehat{\mathbf{d}}(z,\Delta).
\end{equation}
Hence, using (\ref{ineq5}) and (\ref{ineq10}), one obtains:
\begin{equation}\label{equality}
\mathbf{d}(x_1,x_2)\leq\widehat{\mathbf{d}}(z,\Delta)\leq\mathbf{d}(x_1,x_2)\Rightarrow\mathbf{d}(x_1,x_2)=\widehat{\mathbf{d}}(z,\Delta).
\end{equation} 
Using equality (\ref{equality}) and property (\ref{delta_GAS}), we have:
\begin{align}\nonumber
\widehat{\mathbf{d}}\left(\zeta_{z\upsilon}(t),\Delta\right)=\widehat{\mathbf{d}}\left(\left[{\begin{array}{c}\xi_{x_1\upsilon}(t)\\\xi_{x_2\upsilon}(t)\\\end{array}}\right],\Delta\right)=\mathbf{d}\left(\xi_{x_1\upsilon}(t),\xi_{x_2\upsilon}(t)\right)\leq\beta\left(\mathbf{d}\left(x_1,x_2\right),t\right)=\beta\left(\widehat{\mathbf{d}}\left(z,\Delta\right),t\right),
\end{align}
for any $t\in\R_0^+$, any $z=\left[x_1^T,x_2^T\right]^T\in\R^{2n}$ and any $\upsilon\in\mathcal{U}$, where $\zeta_{z\upsilon}=\left[\xi_{x_1\upsilon}^T,\xi_{x_2\upsilon}^T\right]^T$. Hence, $\widehat\Sigma$ is U$_\exists$GAS with respect to $\Delta$.
\end{proof}

We can now provide characterization of $\delta_\exists$-GAS in terms of existence of a $\delta_\exists$-GAS Lyapunov function.
\begin{theorem}
Consider a control system \mbox{$\Sigma=(\mathbb{R}^n,\mathsf{U},\mathcal{U},f)$}. If $\mathsf{U}$ is compact and $\mathbf{d}$ is a metric such that the function $\psi(\cdot)=\mathbf{d}(\cdot,y)$ is continuous\footnotemark[\getrefnumber{continuity}] for any $y\in\R^n$ then the following statements are equivalent: 
\begin{itemize}
\item[(1)] $\Sigma$ is forward complete and there exists a $\delta_\exists$-GAS Lyapunov function, equipped with the metric ${\mathbf{d}}$.
\item[(2)] $\Sigma$ is $\delta_\exists$-GAS, equipped with the metric $\mathbf{d}$.
\end{itemize} 
\end{theorem}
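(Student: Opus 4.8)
The plan is to reduce this equivalence to the combination of Lemma~\ref{lemma10} and Theorem~\ref{theorem10}, using the correspondence between a control system $\Sigma$ and its ``doubled'' system $\widehat\Sigma=(\R^{2n},\mathsf{U},\mathcal{U},\widehat f)$ together with the identification of the diagonal set $\Delta$. The key observation, already exploited in the proof of Lemma~\ref{lemma10}, is that if $\widehat{\mathbf d}$ is the metric on $\R^{2n}$ defined by $\widehat{\mathbf d}(z,z')=\mathbf{d}(x_1,x_1')+\mathbf{d}(x_2,x_2')$, then $\widehat{\mathbf d}(z,\Delta)=\mathbf{d}(x_1,x_2)$ for $z=[x_1^T,x_2^T]^T$. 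Moreover $\widehat{\mathbf d}$ inherits the continuity hypothesis: $\widehat\psi(\cdot)=\widehat{\mathbf d}(\cdot,w)$ is continuous with respect to the Euclidean metric on $\R^{2n}$ for any $w$, since it is a sum of two compositions of $\mathbf{d}(\cdot,y)$-type maps with the (continuous) coordinate projections. So $\widehat\Sigma$, $\Delta$, and $\widehat{\mathbf d}$ satisfy the hypotheses of Theorem~\ref{theorem10}.

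For the direction $(2)\Rightarrow(1)$: assume $\Sigma$ is $\delta_\exists$-GAS equipped with $\mathbf{d}$. By Lemma~\ref{lemma10}, $\widehat\Sigma$ is U$_\exists$GAS with respect to $\Delta$ equipped with $\widehat{\mathbf d}$; note that forward completeness of $\widehat\Sigma$ follows from that of $\Sigma$. Applying Theorem~\ref{theorem10} to $\widehat\Sigma$, there exists a U$_\exists$GAS Lyapunov function $\widehat V:\R^{2n}\to\R_0^+$ with respect to $\Delta$, equipped with $\widehat{\mathbf d}$: that is, a smooth $\widehat V$ and $\mathcal{K}_\infty$ functions $\underline\alpha,\overline\alpha$ and $\kappa\in\R^+$ with $\underline\alpha(\widehat{\mathbf d}(z,\Delta))\le\widehat V(z)\le\overline\alpha(\widehat{\mathbf d}(z,\Delta))$ and $\frac{\partial\widehat V}{\partial z}\widehat f(z,u)\le-\kappa\widehat V(z)$. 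Now set $V(x,x')=\widehat V([x^T,x'^T]^T)$. Using $\widehat{\mathbf d}(z,\Delta)=\mathbf{d}(x,x')$, condition~(i) of Definition~\ref{delta_GAS_Lya} holds verbatim with the same $\underline\alpha,\overline\alpha$; and expanding $\frac{\partial\widehat V}{\partial z}\widehat f(z,u)=\frac{\partial V}{\partial x}f(x,u)+\frac{\partial V}{\partial x'}f(x',u)$ and $\widehat V(z)=V(x,x')$ yields condition~(ii) with the same $\kappa$. Hence $V$ is a $\delta_\exists$-GAS Lyapunov function equipped with $\mathbf{d}$, and $\Sigma$ is forward complete, establishing~(1).

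For the direction $(1)\Rightarrow(2)$: given a $\delta_\exists$-GAS Lyapunov function $V$ for $\Sigma$ equipped with $\mathbf{d}$, run the same dictionary in reverse. Define $\widehat V(z)=V(x_1,x_2)$ for $z=[x_1^T,x_2^T]^T$; smoothness of $\widehat V$ follows from smoothness of $V$. Conditions~(i) and~(ii) of Definition~\ref{delta_GAS_Lya} translate, via $\widehat{\mathbf d}(z,\Delta)=\mathbf{d}(x_1,x_2)$ and the chain-rule identity above, into exactly the two conditions making $\widehat V$ a U$_\exists$GAS Lyapunov function with respect to $\Delta$, equipped with $\widehat{\mathbf d}$. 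Forward completeness of $\widehat\Sigma$ follows from that of $\Sigma$. By Theorem~\ref{theorem10} applied to $\widehat\Sigma$, the system $\widehat\Sigma$ is U$_\exists$GAS with respect to $\Delta$ equipped with $\widehat{\mathbf d}$, so there is a $\mathcal{KL}$ function $\beta$ with $\widehat{\mathbf d}(\zeta_{z\upsilon}(t),\Delta)\le\beta(\widehat{\mathbf d}(z,\Delta),t)$. Writing $z=[x^T,x'^T]^T$ and using $\zeta_{z\upsilon}=[\xi_{x\upsilon}^T,\xi_{x'\upsilon}^T]^T$ together with $\widehat{\mathbf d}(\cdot,\Delta)=\mathbf{d}(\cdot,\cdot)$ on the two coordinates recovers $\mathbf{d}(\xi_{x\upsilon}(t),\xi_{x'\upsilon}(t))\le\beta(\mathbf{d}(x,x'),t)$, i.e.\ $\Sigma$ is $\delta_\exists$-GAS equipped with $\mathbf{d}$.

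I do not expect a serious obstacle here; the proof is essentially bookkeeping once the metric $\widehat{\mathbf d}$ and the diagonal $\Delta$ are set up. The one point requiring a little care is verifying that the continuity hypothesis on $\mathbf{d}$ transfers to $\widehat{\mathbf d}$ on $\R^{2n}$ (so that Theorem~\ref{theorem10} is applicable to $\widehat\Sigma$), and checking that forward completeness passes between $\Sigma$ and $\widehat\Sigma$ in both directions --- both are straightforward. A secondary subtlety is that Theorem~\ref{theorem10} as stated gives a \emph{smooth} Lyapunov function, which is precisely what Definition~\ref{delta_GAS_Lya} demands, so no smoothing argument is needed; one only has to note that $\widehat V(z)=V(x_1,x_2)$ and $V(x,x')=\widehat V([x^T,x'^T]^T)$ preserve smoothness, which is immediate.
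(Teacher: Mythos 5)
Your proof is correct, and for the harder direction it coincides with the paper's: to show (2)$\Rightarrow$(1) the paper also passes to the doubled system of Lemma \ref{lemma10}, applies Theorem \ref{theorem10} to get a U$_\exists$GAS Lyapunov function for $\widehat\Sigma$ with respect to $\Delta$, and unfolds it via the identity $\widehat{\mathbf{d}}(z,\Delta)=\mathbf{d}(x,x')$ exactly as you do. Where you diverge is the direction (1)$\Rightarrow$(2): the paper does not reprove it but cites Theorem 2.6 of \cite{majid4}, which establishes it by a direct comparison-principle argument on $t\mapsto V\left(\xi_{x\upsilon}(t),\xi_{x'\upsilon}(t)\right)$ and, notably, needs neither compactness of $\mathsf{U}$ nor continuity of $\mathbf{d}(\cdot,y)$. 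You instead fold the given $\delta_\exists$-GAS Lyapunov function into a U$_\exists$GAS Lyapunov function $\widehat V(z)=V(x_1,x_2)$ for $\widehat\Sigma$ (using that $\widehat{\mathbf{d}}(z,\Delta)=\mathbf{d}(x_1,x_2)$ holds purely from the metric axioms, independently of any stability assumption) and invoke the converse direction of Theorem \ref{theorem10}. Your route has the virtue of being self-contained within the paper's own results and of treating both implications through one reduction, but it buys this at the cost of importing the compactness and continuity hypotheses (ultimately Teel's converse-Lyapunov machinery behind Theorem \ref{theorem10}) into a direction where they are not needed; the paper's citation keeps that direction valid in full generality. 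Under the theorem's stated hypotheses both arguments are sound, and your bookkeeping (continuity of $\widehat{\mathbf{d}}(\cdot,w)$, forward completeness passing between $\Sigma$ and $\widehat\Sigma$, smoothness of $\widehat V$ and of $V(x,x')=\widehat V([x^T,x'^T]^T)$, and the chain-rule identity matching condition (ii) of Definition \ref{delta_GAS_Lya} with condition (ii) of Definition \ref{UGAS_Lya}) is accurate.
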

\begin{proof}
The proof from (1) to (2) has been provided in Theorem 2.6 in \cite{majid4}, even in the absence of the compactness and continuity assumptions on $\mathsf{U}$ and $\mathbf{d}$, respectively. We now prove that (2) implies (1). Since $\Sigma$ is $\delta_\exists$-GAS, using Lemma \ref{lemma10}, we conclude that the control system $\widehat\Sigma$, defined in Lemma \ref{lemma10}, is U$_\exists$GAS with respect to the diagonal set $\Delta$. Since $\psi(\cdot)=\mathbf{d}(\cdot,y)$ is continuous\footnotemark[\getrefnumber{continuity}]  for any $y\in\R^n$, it can be easily verified that the function $\widehat\psi(\cdot)=\widehat{\mathbf{d}}(\cdot,z')$ is also continuous\footnotemark[\getrefnumber{continuity}]  for any $z'\in\R^{2n}$, where the metric $\widehat{\mathbf{d}}$ was defined in Lemma \ref{lemma10}. Using Theorem \ref{theorem10}, we conclude that there exists a U$_\exists$GAS Lyapunov function $V:\R^{2n}\rightarrow\R_0^+$, with respect to $\Delta$, for $\widehat\Sigma$.
Thanks to the special form of $\widehat\Sigma$, using the equality (\ref{equality}), and slightly abusing notation, the function $V$ satisfies: 
\begin{itemize}
\item[(i)] $\underline\alpha\left(\widehat{\mathbf{d}}\left(\left[{\begin{array}{c}x\\x'\\\end{array}}\right],\Delta\right)\right)\leq V\left(\left[{\begin{array}{c}x\\x'\\\end{array}}\right]\right)\leq\overline\alpha\left(\widehat{\mathbf{d}}\left(\left[{\begin{array}{c}x\\x'\\\end{array}}\right],\Delta\right)\right)\Rightarrow\underline{\alpha}(\mathbf{d}(x,x'))\leq{V}(x,x')\leq\overline{\alpha}(\mathbf{d}(x,x'))$;
\item[(ii)] $\left[\frac{\partial{V}}{\partial{x}}~\frac{\partial{V}}{\partial{x'}}\right]\left[{\begin{array}{c}f(x,u)\\f(x',u)\\\end{array}}\right]\leq-\kappa{V}\left(\left[{\begin{array}{c}x\\x'\\\end{array}}\right]\right)\Rightarrow\frac{\partial{V}}{\partial{x}}f(x,u)+\frac{\partial{V}}{\partial{x'}}f(x',u)\leq -\kappa V(x,x')$,
\end{itemize}
for any $x,x'\in\R^n$, any $u\in\mathsf{U}$, some $\mathcal{K}_\infty$ functions $\underline\alpha,\overline\alpha$ and some $\kappa\in\R^+$. Hence, V is a $\delta_\exists$-GAS Lyapunov function for $\Sigma$. This completes the proof.
\end{proof}

Before providing characterization of $\delta_\exists$-ISS in terms of existence of a $\delta_\exists$-ISS Lyapunov function, we need the following technical lemma, inspired by Proposition 5.3 in \cite{angeli}. By following similar steps as in \cite{angeli}, we need to define the proximal point function $\mathsf{sat}_\mathsf{U}:\R^m\rightarrow\mathsf{U}$, defined by:
\begin{equation}\label{function}
\mathsf{sat}_\mathsf{U}(u)=\arg\min_{u'\in\mathsf{U}}\left\Vert u'-u\right\Vert.
\end{equation} 
As explained in \cite{angeli}, by assuming $\mathsf{U}$ is closed and convex and since $\Vert\cdot\Vert:\R^m\rightarrow\R^+_0$ is a proper and convex function, the definition (\ref{function}) is well-defined and the minimizer of $\left\Vert u'-u\right\Vert$ with $u'\in\mathsf{U}$ is unique. Moreover, by convexity of $\mathsf{U}$ we have:
\begin{equation}\label{convex}
\Vert\mathsf{sat}_\mathsf{U}(u_1)-\mathsf{sat}_\mathsf{U}(u_2)\Vert\leq\Vert u_1-u_2\Vert,~~~~\forall u_1,u_2\in\R^m.
\end{equation}

\begin{lemma}\label{lemma200}
Consider a control system \mbox{$\Sigma=(\mathbb{R}^n,\mathsf{U},\mathcal{U},f)$}, where $\mathsf{U}$ is closed and convex. If $\Sigma$ is $\delta_\exists$-ISS, equipped with a metric $\mathbf{d}$ such that $\psi(\cdot)=\mathbf{d}(\cdot,y)$ is continuous\footnotemark[\getrefnumber{continuity}] for any $y\in\R^n$, then there exists a $\mathcal{K}_\infty$ function $\rho$ such that the control system $\widehat\Sigma=(\R^{2n},\mathsf{D},\mathcal{D},\widehat{f})$\footnote{$\mathcal{D}$ is the set of all measurable and locally essentially bounded functions of time from intervals of the form \mbox{$]a,b[\subseteq\mathbb{R}$} to $\mathsf{D}$ with $a<0$ and $b>0$.} is U$_\exists$GAS with respect to the diagonal set $\Delta$, where: 
\begin{equation}\nonumber
\widehat{f}(\zeta,\omega)=\left[
                \begin{array}{c}
                 f(\xi_1,\mathsf{sat}_\mathsf{U}(\omega_1+\rho(\mathbf{d}(\xi_1,\xi_2))\omega_2))\\ 
                 f(\xi_2,\mathsf{sat}_\mathsf{U}(\omega_1-\rho(\mathbf{d}(\xi_1,\xi_2))\omega_2))
                \end{array}
                \right],
\end{equation}
$\zeta=\left[\xi_1^T,\xi_2^T\right]^T$, $\mathsf{D}=\mathsf{U}\times\mathcal{B}_1(0_m)$, and $\omega=\left[\omega_1^T,\omega_2^T\right]^T$.
\end{lemma}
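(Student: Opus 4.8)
The plan is to mimic the reasoning used in Lemma \ref{lemma10}, but now carrying the input perturbation explicitly through the construction of $\widehat{f}$. The basic idea behind the form of $\widehat{f}$ is the classical ``separation of inputs'' trick of Angeli (Proposition 5.3 in \cite{angeli}): given a common base input $\omega_1$ and a perturbation direction $\omega_2$ with $\Vert\omega_2\Vert\le 1$, the two copies of $\Sigma$ see inputs that differ by $2\rho(\mathbf{d}(\xi_1,\xi_2))\Vert\omega_2\Vert$ in the pre-saturation coordinates, hence by at most the same amount after applying $\mathsf{sat}_\mathsf{U}$ by the nonexpansiveness estimate (\ref{convex}). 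The point of the state-dependent scaling $\rho(\mathbf{d}(\xi_1,\xi_2))$ is that when the two trajectories are far apart the admissible perturbation is correspondingly large, so that the $\mathcal{KL}$ decay of the $\delta_\exists$-ISS estimate dominates the $\mathcal{K}_\infty$ contribution of the input difference; this is exactly where $\rho$ must be chosen, and choosing $\rho$ is the heart of the argument.

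First I would reuse the metric $\widehat{\mathbf{d}}$ on $\R^{2n}$ defined in (\ref{metric1}) and the identity (\ref{equality}), so that $\widehat{\mathbf{d}}(z,\Delta)=\mathbf{d}(x_1,x_2)$ for $z=[x_1^T,x_2^T]^T$; the continuity of $\widehat\psi(\cdot)=\widehat{\mathbf{d}}(\cdot,z')$ follows from continuity of $\psi$ just as in the proof of the preceding theorem. Next, fix $z=[x_1^T,x_2^T]^T\in\R^{2n}$ and $\omega=[\omega_1^T,\omega_2^T]^T\in\mathcal{D}$, and let $\zeta_{z\omega}=[\xi_{1}^T,\xi_{2}^T]^T$ be the corresponding trajectory of $\widehat\Sigma$. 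By construction, $\xi_1=\xi_{x_1\upsilon_1}$ and $\xi_2=\xi_{x_2\upsilon_2}$ are trajectories of $\Sigma$ under the inputs $\upsilon_i(t)=\mathsf{sat}_\mathsf{U}(\omega_1(t)\pm\rho(\mathbf{d}(\xi_1(t),\xi_2(t)))\omega_2(t))$, which lie in $\mathcal{U}$ because $\mathsf{sat}_\mathsf{U}$ takes values in $\mathsf{U}$. Using (\ref{convex}) and $\Vert\omega_2(t)\Vert\le 1$ we get the pointwise bound $\Vert\upsilon_1(t)-\upsilon_2(t)\Vert\le 2\rho(\mathbf{d}(\xi_1(t),\xi_2(t)))$, hence $\Vert\upsilon_1-\upsilon_2\Vert_\infty\le 2\rho(\mathbf{d}(\xi_1,\xi_2)_\infty)=2\rho(\sup_{t\ge0}\widehat{\mathbf{d}}(\zeta_{z\omega}(t),\Delta))$. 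Feeding this into the $\delta_\exists$-ISS estimate (\ref{delta_ISS}) gives, for all $t\ge0$,
\begin{equation}\nonumber
\widehat{\mathbf{d}}(\zeta_{z\omega}(t),\Delta)=\mathbf{d}(\xi_1(t),\xi_2(t))\le\beta(\widehat{\mathbf{d}}(z,\Delta),t)+\gamma\big(2\rho(\textstyle\sup_{s\ge0}\widehat{\mathbf{d}}(\zeta_{z\omega}(s),\Delta))\big).
\end{equation}

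The main obstacle — and the step I expect to be technically most delicate — is to turn this implicit inequality into a genuine $\mathcal{KL}$ bound $\widehat{\mathbf{d}}(\zeta_{z\omega}(t),\Delta)\le\widehat\beta(\widehat{\mathbf{d}}(z,\Delta),t)$ for all $\omega\in\mathcal{D}$. The standard device (again from \cite{angeli}) is to choose $\rho\in\mathcal{K}_\infty$ small enough that $\gamma(2\rho(r))\le\tfrac12 r$ for all $r\ge0$, so that writing $R=\sup_{s\ge0}\widehat{\mathbf{d}}(\zeta_{z\omega}(s),\Delta)$ one obtains $R\le\beta(\widehat{\mathbf{d}}(z,\Delta),0)+\tfrac12 R$, hence $R\le 2\beta(\widehat{\mathbf{d}}(z,\Delta),0)<\infty$; a priori boundedness of the trajectory then justifies all the suprema and, in particular, shows $\widehat\Sigma$ is forward complete. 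Plugging $R\le 2\beta(\widehat{\mathbf{d}}(z,\Delta),0)$ back into the displayed inequality and using $\gamma(2\rho(R))\le\tfrac12 R\le\beta(\widehat{\mathbf{d}}(z,\Delta),0)$ yields $\widehat{\mathbf{d}}(\zeta_{z\omega}(t),\Delta)\le\beta(\widehat{\mathbf{d}}(z,\Delta),t)+\beta(\widehat{\mathbf{d}}(z,\Delta),0)$, which is not yet a $\mathcal{KL}$ estimate because the second term does not decay in $t$. The fix is the classical causality/semigroup argument: apply the bound on the time interval $[t/2,t]$ with initial condition $\zeta_{z\omega}(t/2)$, use $\widehat{\mathbf{d}}(\zeta_{z\omega}(t/2),\Delta)\le\beta(\widehat{\mathbf{d}}(z,\Delta),t/2)$, and combine to get decay in both arguments, producing the desired $\widehat\beta\in\mathcal{KL}$; this is exactly the packaging carried out in Lemma 2.3 / Proposition 5.3 of \cite{angeli}, and I would cite it rather than redo it in full. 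One subtlety to check along the way is that $\mathbf{d}(\xi_1(t),\xi_2(t))$ is a measurable (indeed continuous) function of $t$ so that $\upsilon_1,\upsilon_2$ are admissible inputs of $\Sigma$ and the supremum defining $\Vert\upsilon_1-\upsilon_2\Vert_\infty$ makes sense — this uses the assumed continuity of $\psi(\cdot)=\mathbf{d}(\cdot,y)$ together with continuity of the trajectories $\xi_i$.
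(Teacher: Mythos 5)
Your overall strategy is the right one (it is the same Angeli-style separation-of-inputs argument, carried out with the metric $\widehat{\mathbf{d}}$ of Lemma \ref{lemma10}), but two steps in your write-up are genuinely gapped. First, the small-gain rearrangement is circular as stated: from $R\leq\beta(\widehat{\mathbf{d}}(z,\Delta),0)+\tfrac12R$ with $R=\sup_{s\geq0}\widehat{\mathbf{d}}(\zeta_{z\omega}(s),\Delta)$ you may conclude $R\leq2\beta(\widehat{\mathbf{d}}(z,\Delta),0)$ only if you already know $R<\infty$, and that finiteness is precisely what has to be proved (your remark that ``a priori boundedness of the trajectory justifies all the suprema'' asserts the conclusion rather than establishing it). The paper closes exactly this hole with a first-crossing-time argument: it chooses $\rho(r)\leq\tfrac12\gamma^{-1}\circ\left(\alpha^{-1}(r)/4\right)$ with $\alpha(r)=\beta(r,0)$, defines $t_1=\inf\{t>0\,:\,\gamma(2\rho(\widehat{\mathbf{d}}(\zeta_{z\omega}(t),\Delta)))>\widehat{\mathbf{d}}(z,\Delta)/2\}$, uses the continuity of $\widehat{\mathbf{d}}(\cdot,\Delta)$ (which is where the continuity hypothesis on $\mathbf{d}$ enters) to get $t_1>0$, and derives a contradiction to show $t_1=\infty$; only then is the gain term controlled for all time. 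A finite-horizon version of your argument could also work, but it must be written out, not assumed.

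Second, your passage from the non-decaying bound $\widehat{\mathbf{d}}(\zeta_{z\omega}(t),\Delta)\leq\beta(\widehat{\mathbf{d}}(z,\Delta),t)+\beta(\widehat{\mathbf{d}}(z,\Delta),0)$ to a $\mathcal{KL}$ estimate does not go through as described: the restart inequality you invoke, $\widehat{\mathbf{d}}(\zeta_{z\omega}(t/2),\Delta)\leq\beta(\widehat{\mathbf{d}}(z,\Delta),t/2)$, is not available (the bound you actually have at time $t/2$ carries the extra non-decaying term), and a single semigroup step does not produce decay. The paper avoids this trap by working with the max-form of the $\delta_\exists$-ISS estimate (Remark 2.5 of \cite{sontag3}) and tying $\rho$ to $\beta(\cdot,0)$, which yields $\widehat{\mathbf{d}}(\zeta_{z\omega}(t),\Delta)\leq\max\{\beta(\widehat{\mathbf{d}}(z,\Delta),t),\widehat{\mathbf{d}}(z,\Delta)/2\}$; it then runs the iterative halving argument (for each $r$ a uniform time $T_r$ after which the distance is at most $r/2$, summed over $r,r/2,r/4,\dots$) to get uniform global attractivity, notes uniform global stability, and invokes \cite{teel} to conclude U$_\exists$GAS. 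Finally, deferring the ``packaging'' to Proposition 5.3 of \cite{angeli} is not quite legitimate here, since that result is formulated for the Euclidean metric; the content of this lemma is that the argument survives with an arbitrary metric $\mathbf{d}$ under the stated continuity assumption, which is why the paper redoes it in full rather than citing it.
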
 

\begin{proof}
The proof was inspired by the proof of Proposition 5.3 in \cite{angeli}. We include the complete details of the proof to ensure that the interested reader can assess the essential differences caused by using the arbitrary metric $\mathbf{d}$ rather than the Euclidean metric. Since $\Sigma$ is $\delta_\exists$-ISS, equipped with the metric $\mathbf{d}$, there exist some $\mathcal{KL}$ function $\beta$ and $\mathcal{K}_\infty$ function $\gamma$ such that:
\begin{equation}\label{delta-ISS}
\mathbf{d}(\xi_{x\upsilon}(t),\xi_{x'\upsilon'}(t))\leq\max\{\beta(\mathbf{d}(x,x'),t),\gamma(\Vert\upsilon-\upsilon'\Vert_\infty)\}.
\end{equation} 
Note that inequality (\ref{delta-ISS}) is a straightforward consequence of inequality (\ref{delta_ISS}) in Definition \ref{dISS} (see Remark 2.5 in \cite{sontag3}).
Using the results in Lemma \ref{lemma10} and the proposed metric $\widehat{\mathbf{d}}$ in (\ref{metric1}), we have that $\mathbf{d}(x,x')=\widehat{\mathbf{d}}(z,\Delta)$, where $z=\left[x^T,x'^T\right]^T$, and $\Delta$ was defined in (\ref{delta}). Without loss of generality we can assume $\alpha(r)=\beta(r,0)>r$ for any $r\in\R^+$. Let $\rho$ be a $\mathcal{K}_\infty$ function satisfying $\rho(r)\leq\frac{1}{2}\gamma^{-1}\circ\left(\alpha^{-1}(r)/4\right)$ (note that $\gamma,\alpha\in\mathcal{K}_\infty$). Now we show that 
\begin{equation}\label{inequality0}
\gamma\left(\left\Vert2\omega_2(t)\rho\left(\widehat{\mathbf{d}}(\zeta_{z\omega}(t),\Delta)\right)\right\Vert\right)\leq\widehat{\mathbf{d}}(z,\Delta)/2,
\end{equation}
for any $t\in\R_0^+$, any $z\in\R^{2n}$, and any $\omega=\left[\omega_1^T,\omega_2^T\right]^T\in\mathcal{D}$. Since $\gamma$ is a $\mathcal{K}_\infty$ function and $\omega_2(t)\in\mathcal{B}_1(0_m)$ for any $t\in\R_0^+$, it is enough to show 
\begin{equation}\label{inequality1}
\gamma\left(2\rho\left(\widehat{\mathbf{d}}(\zeta_{z\omega}(t),\Delta)\right)\right)\leq\widehat{\mathbf{d}}(z,\Delta)/2.
\end{equation}
Since 
\begin{align}\nonumber
\gamma\left(2\rho\left(\widehat{\mathbf{d}}(\zeta_{z\omega}(0),\Delta)\right)\right)=\gamma\left(2\rho\left(\widehat{\mathbf{d}}(z,\Delta)\right)\right)\leq\alpha^{-1}\left(\widehat{\mathbf{d}}(z,\Delta)\right)/4<\widehat{\mathbf{d}}(z,\Delta)/4,
\end{align}
and $\varphi(\cdot)=\widehat{\mathbf{d}}(\cdot,\Delta)$ is a continuous\footnotemark[\getrefnumber{continuity}] function (see proof of Theorem \ref{theorem10}), then for all $t\in\R_0^+$ small enough, we have $\gamma\left(2\rho\left(\widehat{\mathbf{d}}(\zeta_{z\omega}(t),\Delta)\right)\right)\leq\widehat{\mathbf{d}}(z,\Delta)/4$. Now, let 
\begin{equation}\nonumber
t_1=\inf\left\{t>0\,\,|\,\,\gamma\left(2\rho\left(\widehat{\mathbf{d}}(\zeta_{z\omega}(t),\Delta)\right)\right)>\widehat{\mathbf{d}}(z,\Delta)/2\right\}.
\end{equation}
Clearly $t_1>0$. We will show that $t_1=\infty$. Now, assume by contradiction that $t_1<\infty$. Therefore, the inequality (\ref{inequality1}) holds for all $t\in[0,t_1)$. Hence, for all $t\in[0,t_1)$, one obtains:
\begin{eqnarray}\label{inequality2}
\gamma\left(\left\Vert2\omega_2(t)\rho\left(\widehat{\mathbf{d}}(\zeta_{z\omega}(t),\Delta)\right)\right\Vert\right)\leq\gamma\left(2\rho\left(\widehat{\mathbf{d}}(\zeta_{z\omega}(t),\Delta)\right)\right)\leq\widehat{\mathbf{d}}(z,\Delta)/2<\alpha\left(\widehat{\mathbf{d}}(z,\Delta)\right)/2.
\end{eqnarray} 
Let $\upsilon$ and $\upsilon'$ be defined as:
\begin{align}
\nonumber
\upsilon(t)&=\mathsf{sat}_\mathsf{U}\left(\omega_1(t)+\rho\left(\widehat{\mathbf{d}}(\zeta_{z\omega}(t),\Delta)\right)\omega_2(t)\right),\\\notag
\upsilon'(t)&=\mathsf{sat}_\mathsf{U}\left(\omega_1(t)-\rho\left(\widehat{\mathbf{d}}(\zeta_{z\omega}(t),\Delta)\right)\omega_2(t)\right).
\end{align}
By using (\ref{convex}), we obtain: $$\Vert \upsilon(t)-\upsilon'(t)\Vert\leq\left\Vert2\omega_2(t)\rho\left(\widehat{\mathbf{d}}(\zeta_{z\omega}(t),\Delta)\right)\right\Vert.$$ Using (\ref{delta-ISS}) and (\ref{inequality2}), we have: 
\begin{eqnarray}\label{ineq6}
\widehat{\mathbf{d}}(\zeta_{z\omega}(t),\Delta)=\mathbf{d}\left(\xi_{x\upsilon}(t),\xi_{x'\upsilon'}(t)\right)\leq\beta\left(\mathbf{d}(x,x'),0\right)=\beta\left(\widehat{\mathbf{d}}(z,\Delta),0\right)=\alpha\left(\widehat{\mathbf{d}}(z,\Delta)\right), 
\end{eqnarray}
for any $t\in[0,t_1)$, any $\omega\in\mathcal{D}$, and any $z=\left[x^T,x'^T\right]^T\in\R^{2n}$. Using $\rho(r)\leq\frac{1}{2}\gamma^{-1}\circ\left(\alpha^{-1}(r)/4\right)$, the inequality (\ref{ineq6}) implies that 
\begin{equation}\label{contradict}
\gamma\left(2\rho\left(\widehat{\mathbf{d}}(\zeta_{z\omega}(t),\Delta)\right)\right)\leq\widehat{\mathbf{d}}(z,\Delta)/4,
\end{equation}
for any $t\in[0,t_1)$. Since the function $\psi(\cdot)=\widehat{\mathbf{d}}(\cdot,\Delta)$ is continuous\footnotemark[\getrefnumber{continuity}], the inequality (\ref{contradict}) contradicts the definition of $t_1$. Therefore, $t_1=\infty$ and inequality (\ref{inequality0}) is proved for all $t\in\R_0^+$. Therefore, using (\ref{delta-ISS}) and (\ref{inequality0}), we obtain:
\begin{align}\nonumber
\widehat{\mathbf{d}}(\zeta_{z\omega}(t),\Delta)=\mathbf{d}\left(\xi_{x\upsilon}(t),\xi_{x'\upsilon'}(t)\right)\leq\max\left\{\beta\left(\mathbf{d}(x,x'),t\right),\gamma\left(\Vert\upsilon-\upsilon'\Vert_\infty\right)\right\}\leq\\\notag\max\left\{\beta\left(\mathbf{d}(x,x'),t\right),\gamma\left(\left\Vert2\omega_2\rho\left(\widehat{\mathbf{d}}(\zeta_{z\omega},\Delta)\right)\right\Vert_\infty\right)\right\}\leq\max\left\{\beta\left(\widehat{\mathbf{d}}(z,\Delta),t\right),\widehat{\mathbf{d}}(z,\Delta)/2\right\},
\end{align}
for any $z=\left[x^T,x'^T\right]^T\in\R^{2n}$, any $\omega\in\mathcal{D}$, and any $t\in\R_0^+$. Since $\beta$ is a $\mathcal{KL}$ function, it can be readily seen that for each $r>0$ if $\widehat{\mathbf{d}}(z,\Delta)\leq r$, then there exists some $T_r\geq0$ such that for any $t\geq T_r$, $\beta\left(\widehat{\mathbf{d}}(z,\Delta),t\right)\leq r/2$ and, hence, $\widehat{\mathbf{d}}(\zeta_{z\omega}(t),\Delta)\leq r/2$. Now we show that the set $\Delta$ is a global attractor for the control system $\widehat\Sigma$. For any $\varepsilon\in\R^+$, let $k$ be a positive integer such that $2^{-k}r<\varepsilon$. Let $r_1=r$ and $r_i=r_{i-1}/2$ for $i\geq2$, and let $\tau=T_{r_1}+T_{r_2}+\cdots+T_{r_k}$. Then, for $t\geq\tau$, we have $\widehat{\mathbf{d}}(\zeta_{z\omega}(t),\Delta)\leq2^{-k}r<\varepsilon$ for all $\widehat{\mathbf{d}}(z,\Delta)\leq r$, and all $\omega\in\mathcal{D}$. Therefore, it can be concluded that the set $\Delta$ is a uniform global attractor for the control system $\widehat\Sigma$. Furthermore, since $\widehat{\mathbf{d}}(\zeta_{z\omega}(t),\Delta)\leq\beta\left(\widehat{\mathbf{d}}(z,\Delta),0\right)$ for all $t\in\R_0^+$, all $z\in\R^{2n}$, and all $\omega\in\mathcal{D}$, the control system $\widehat\Sigma$ is uniformly globally stable and as shown in \cite{teel}, it is U$_\exists$GAS.
\end{proof}

Finally, the next theorem provide a characterization of $\delta_\exists$-ISS in terms of the existence of a $\delta_\exists$-ISS Lyapunov function.
\begin{theorem}
Consider a control system \mbox{$\Sigma=(\mathbb{R}^n,\mathsf{U},\mathcal{U},f)$}. If $\mathsf{U}$ is compact and convex and $\mathbf{d}$ is a metric such that the function $\psi(\cdot)=\mathbf{d}(\cdot,y)$ is continuous\footnotemark[\getrefnumber{continuity}] for any $y\in\R^n$ then the following statements are equivalent: 
\begin{itemize}
\item[(1)] $\Sigma$ is forward complete and there exists a $\delta_\exists$-ISS Lyapunov function, equipped with metric ${\mathbf{d}}$.
\item[(2)] $\Sigma$ is $\delta_\exists$-ISS, equipped with metric $\mathbf{d}$.
\end{itemize}
\end{theorem}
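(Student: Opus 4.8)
The plan is to mirror, for the ISS setting, the two-implication structure already carried out for $\delta_\exists$-GAS, with Lemma~\ref{lemma10} replaced by Lemma~\ref{lemma200}. The implication $(1)\Rightarrow(2)$ is the sufficiency direction and, as for $\delta_\exists$-GAS, it is established in \cite{majid4} without any compactness or convexity assumption on $\mathsf{U}$ and without continuity of $\mathbf{d}$: along any pair of trajectories $\xi_{x\upsilon},\xi_{x'\upsilon'}$ one differentiates $t\mapsto V(\xi_{x\upsilon}(t),\xi_{x'\upsilon'}(t))$, uses condition~(iii) of Definition~\ref{delta_GAS_Lya} together with $\sigma(\Vert\upsilon(t)-\upsilon'(t)\Vert)\le\sigma(\Vert\upsilon-\upsilon'\Vert_\infty)$, integrates the resulting scalar differential inequality by a comparison argument, and inverts the sandwich bound in condition~(i) to obtain the $\mathcal{KL}$ function $\beta$ and the $\mathcal{K}_\infty$ gain $\gamma$ of Definition~\ref{dISS}; forward completeness is part of hypothesis~(1).

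For $(2)\Rightarrow(1)$ I would proceed as follows. Since $\mathsf{U}$ is compact it is in particular closed and convex, so Lemma~\ref{lemma200} applies and produces a $\mathcal{K}_\infty$ function $\rho$ for which the relaxed system $\widehat\Sigma=(\R^{2n},\mathsf{D},\mathcal{D},\widehat{f})$, with $\mathsf{D}=\mathsf{U}\times\mathcal{B}_1(0_m)$, is U$_\exists$GAS with respect to the diagonal set $\Delta$, equipped with the metric $\widehat{\mathbf{d}}$ of~(\ref{metric1}). The set $\mathsf{D}$ is compact, being a product of compact sets, and, exactly as in the proof of the $\delta_\exists$-GAS characterization, the hypothesis that $\psi(\cdot)=\mathbf{d}(\cdot,y)$ is continuous for every $y\in\R^n$ carries over to $\widehat\psi(\cdot)=\widehat{\mathbf{d}}(\cdot,z')$ for every $z'\in\R^{2n}$. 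Theorem~\ref{theorem10} then delivers a U$_\exists$GAS Lyapunov function $V:\R^{2n}\to\R_0^+$ with respect to $\Delta$ for $\widehat\Sigma$, with $\mathcal{K}_\infty$ bounds $\underline\alpha,\overline\alpha$ and rate $\kappa\in\R^+$. Forward completeness of $\Sigma$ is again part of the definition of $\delta_\exists$-ISS. Writing $V(x,x')$ for the value of $V$ at $[x^{T},{x'}^{T}]^{T}$, condition~(i) of a $\delta_\exists$-ISS Lyapunov function is immediate from the bounds on $V$ together with the identity $\widehat{\mathbf{d}}([x^{T},{x'}^{T}]^{T},\Delta)=\mathbf{d}(x,x')$ of~(\ref{equality}).

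It remains to verify condition~(iii) of Definition~\ref{delta_GAS_Lya}, which is the heart of the argument. The Lyapunov decrease of $V$ along $\widehat{f}$ reads, for all $x,x'\in\R^n$, $\omega_1\in\mathsf{U}$ and $\omega_2\in\mathcal{B}_1(0_m)$,
\[
\frac{\partial V}{\partial x}f(x,\mathsf{sat}_\mathsf{U}(\omega_1+\rho(\mathbf{d}(x,x'))\omega_2))+\frac{\partial V}{\partial x'}f(x',\mathsf{sat}_\mathsf{U}(\omega_1-\rho(\mathbf{d}(x,x'))\omega_2))\le-\kappa V(x,x').
\]
Fix $u,u'\in\mathsf{U}$ and $x,x'\in\R^n$. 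If $x=x'$, then $\nabla V$ vanishes there (the diagonal is the zero set of $V$) and~(iii) holds trivially; so assume $\mathbf{d}(x,x')>0$. If in addition $\Vert u-u'\Vert\le 2\rho(\mathbf{d}(x,x'))$, choose $\omega_1=\tfrac{1}{2}(u+u')\in\mathsf{U}$ by convexity and $\omega_2=(u-u')/(2\rho(\mathbf{d}(x,x')))\in\mathcal{B}_1(0_m)$; since $u,u'\in\mathsf{U}$ the proximal projections fix them, so the displayed inequality becomes $\frac{\partial V}{\partial x}f(x,u)+\frac{\partial V}{\partial x'}f(x',u')\le-\kappa V(x,x')$ and~(iii) holds for any nonnegative $\sigma$. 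In the complementary regime $\Vert u-u'\Vert>2\rho(\mathbf{d}(x,x'))$ --- where, because $\rho\in\mathcal{K}_\infty$ and $\Vert u-u'\Vert$ is bounded by the diameter of $\mathsf{U}$, the pair $(x,x')$ is confined to a $\widehat{\mathbf{d}}$-neighbourhood of $\Delta$ of radius controlled by $\Vert u-u'\Vert$ --- one reproduces instead the admissible pair $(v_1,v_2)$ obtained by displacing $\tfrac{1}{2}(u+u')$ a distance $\rho(\mathbf{d}(x,x'))$ towards $u$, respectively towards $u'$, along the segment $[u,u']\subseteq\mathsf{U}$, so that $\frac{\partial V}{\partial x}f(x,v_1)+\frac{\partial V}{\partial x'}f(x',v_2)\le-\kappa V(x,x')$, and then absorbs the mismatch $\frac{\partial V}{\partial x}[f(x,u)-f(x,v_1)]+\frac{\partial V}{\partial x'}[f(x',u')-f(x',v_2)]$ into a $\mathcal{K}_\infty$ function $\sigma$ of $\Vert u-u'\Vert$. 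Enlarging $\sigma$ to dominate both cases makes $V$ a $\delta_\exists$-ISS Lyapunov function for $\Sigma$ with the metric $\mathbf{d}$, which completes $(2)\Rightarrow(1)$.

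The step I expect to be the genuine obstacle is this last estimate: controlling the mismatch term by a function of $\Vert u-u'\Vert$ alone, uniformly as $(x,x')$ ranges over a $\widehat{\mathbf{d}}$-tube around $\Delta$, a set which is not compact. This is precisely where compactness and convexity of $\mathsf{U}$ are indispensable --- they keep $u,u',v_1,v_2$ in a fixed compact set and make the diameter of $\mathsf{U}$ finite --- and where one must exploit the structure of the converse Lyapunov function provided by Theorem~\ref{theorem10}, in particular that its gradient vanishes on $\Delta$, together with the joint continuity of $f$ and its Lipschitz continuity in $x$ uniform over $\mathsf{U}$ on compact sets, rather than treating $V$ as an arbitrary smooth function. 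Everything else is a transcription of the $\delta_\exists$-GAS argument and of the construction in \cite{angeli}, with the Euclidean metric replaced throughout by the abstract metric $\mathbf{d}$.
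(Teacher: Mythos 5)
Your proposal is correct and follows essentially the same route as the paper: $(1)\Rightarrow(2)$ by citation to \cite{majid4}, and $(2)\Rightarrow(1)$ via Lemma \ref{lemma200}, Theorem \ref{theorem10}, the identity (\ref{equality}) for condition (i), and the substitution $d_1=(u+u')/2$, $d_2=(u-u')/(2\rho(\mathbf{d}(x,x')))$ in the regime $\Vert u-u'\Vert\leq 2\rho(\mathbf{d}(x,x'))$ to obtain the implication-form inequality (\ref{cond2}). The only divergence is at the step you flag as the genuine obstacle: the paper does not carry out the absorption of the mismatch term in the complementary regime by hand, but instead passes from the implication form to the dissipation form by invoking Remark 2.4 in \cite{sontag3}.
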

\begin{proof}
The proof from (1) to (2) has been showen in Theorem 2.6 in \cite{majid4}, even in the absence of the compactness and convexity assumptions on $\mathsf{U}$ and the continuity assumption on $\mathbf{d}$. We now prove that (2) implies (1). As we proved in Lemma \ref{lemma200}, since $\Sigma$ is $\delta_\exists$-ISS, it implies that the control system $\widehat\Sigma$, defined in Lemma \ref{lemma200}, is U$_\exists$GAS with respect to $\Delta$. Since $\psi(\cdot)=\mathbf{d}(\cdot,y)$ is continuous\footnotemark[\getrefnumber{continuity}] for any $y\in\R^n$, it can be easily verified that $\widehat\psi(\cdot)=\widehat{\mathbf{d}}(\cdot,z')$ is continuous\footnotemark[\getrefnumber{continuity}] for any $z'\in\R^{2n}$, where the metric $\widehat{\mathbf{d}}$ was defined in the proof of Lemma \ref{lemma10}. Using Theorem \ref{theorem10}, we conclude that there exists a U$_\exists$GAS Lyapunov function V, with respect to $\Delta$, for $\widehat\Sigma$. By using the special form of $\widehat\Sigma$, defined in Lemma \ref{lemma200}, the equality (\ref{equality}), and slightly abusing notation the function $V$ satisfies: 
\begin{itemize}
\item[(i)] $\underline\alpha\left(\widehat{\mathbf{d}}\left(\left[{\begin{array}{c}x\\x'\\\end{array}}\right],\Delta\right)\right)\leq V\left(\left[{\begin{array}{c}x\\x'\\\end{array}}\right]\right)\leq\overline\alpha\left(\widehat{\mathbf{d}}\left(\left[{\begin{array}{c}x\\x'\\\end{array}}\right],\Delta\right)\right)\Rightarrow\underline{\alpha}(\mathbf{d}(x,x'))\leq{V}(x,x')\leq\overline{\alpha}(\mathbf{d}(x,x'))$;
\end{itemize}
for any $x,x'\in\R^n$, some $\mathcal{K}_\infty$ functions $\underline\alpha,\overline\alpha$ and
\begin{itemize}
\item[(ii)] \begin{align}\label{deriv}
&\left[\frac{\partial{V}}{\partial{x}}~\frac{\partial{V}}{\partial{x'}}\right]\left[{\begin{array}{c}f(x,\mathsf{sat}_\mathsf{U}(d_1+\rho(\mathbf{d}(x,x'))d_2))\\f(x',\mathsf{sat}_\mathsf{U}(d_1-\rho(\mathbf{d}(x,x'))d_2))\\\end{array}}\right]\leq-\kappa{V}\left(\left[{\begin{array}{c}x\\x'\\\end{array}}\right]\right)\\\notag
&\Rightarrow\frac{\partial{V}}{\partial{x}}f(x,\mathsf{sat}_\mathsf{U}(d_1+\rho(\mathbf{d}(x,x'))d_2))+\frac{\partial{V}}{\partial{x'}}f(x',\mathsf{sat}_\mathsf{U}(d_1-\rho(\mathbf{d}(x,x'))d_2))\leq-\kappa V(x,x'),
\end{align}
\end{itemize}
for some $\kappa\in\R^+$, any $x,x'\in\R^n$, and any $\left[d_1^T,d_2^T\right]^T\in\mathsf{D}$. By choosing \mbox{$d_1=(u+u')/2$} and $d_2=(u-u')/(2\rho(\mathbf{d}(x,x')))$ for any $u,u'\in\mathsf{U}$, it can be readily checked that \mbox{$\left[d_1^T,d_2^T\right]^T\in\mathsf{U}\times\mathcal{B}_1(0_m)$}, whenever $2\rho(\mathbf{d}(x,x'))\geq\Vert u-u'\Vert$. Hence, using (\ref{deriv}), we have that the following implication holds:
\begin{align}\label{cond2}
\text{if}~\varphi(\mathbf{d}(x,x'))\geq\Vert u-u'\Vert,~\text{then}~\frac{\partial{V}}{\partial{x}}f(x,u)+\frac{\partial{V}}{\partial{x'}}f(x',u')\leq-\kappa V(x,x'),
\end{align}
where $\varphi(r)=2\rho(r)$. As shown in Remark 2.4 in \cite{sontag3}, there is no loss of generality in modifying inequalities (\ref{cond2}) to
\begin{equation}\nonumber
\frac{\partial{V}}{\partial{x}}f(x,u)+\frac{\partial{V}}{\partial{x'}}f(x',u')\leq-\widehat\kappa V(x,x')+\sigma(\Vert u-u'\Vert),
\end{equation}
for some $\mathcal{K}_\infty$ function $\sigma$ and some $\widehat\kappa\in\R^+$, which completes the proof.
\end{proof}

\subsection{Contraction metrics description}
In addition to incremental Lyapunov functions, the $\delta_\exists$-GAS and $\delta_\exists$-ISS conditions can be checked by resorting to contraction metrics. The interested reader may consult \cite{lohmiller} for more detailed information about the notion of contraction metrics. Note that for all definitions and results in this subsection we require function $f$ to be continuously differentiable which was not the case in characterizations of incremental stability using incremental Lyapunov functions. 

Now we recall the notions of contraction metrics, presented in \cite{lohmiller,majid1}.
\begin{definition}[\cite{lohmiller}]
\label{contraction}
Let $\Sigma=(\mathbb{R}^n,\mathsf{U},\mathcal{U},f)$ be a smooth control system on $\R^n$ equipped with a Riemannian metric $G$. The metric $G$ is said to be a contraction metric, with respect to states, for system $\Sigma$ if there exists some $\widehat\lambda\in\mathbb{R}^+$ such that:
\begin{equation}
\langle X,X \rangle_{F}\leq-\widehat\lambda\langle X, X\rangle_{G}, \label{contractionGAS1}
\end{equation}
for $F(x,u)=\left(\frac{\partial{f}}{\partial{x}}\right)^TG(x)+G(x)\frac{\partial{f}}{\partial{x}}+\frac{\partial{G}}{\partial{x}}f(x,u)$, any \mbox{$u\in\mathsf{U}$}, $X\in\mathbb{R}^n$, and $x\in \R^n$. The constant $\widehat\lambda$ is called the contraction rate. 
\end{definition}

Note that when the metric $G$ is constant, the condition (\ref{contractionGAS1}) is known as the Demidovich's condition \cite{pavlov,pavlov1,demidovich}. It is shown in \cite{pavlov} that such condition implies $\delta$-GAS and the convergent system property for $\Sigma$.

\begin{definition}[\cite{majid1}]
\label{contraction1}
Let $\Sigma=(\mathbb{R}^n,\mathsf{U},\mathcal{U},f)$ be a smooth control system on $\R^n$ equipped with a Riemannian metric $G$. The metric $G$ is said to be a contraction metric, with respect to states and inputs, for system $\Sigma$ if there exist some $\widehat\lambda\in\mathbb{R}^+$ and $\alpha\in\mathbb{R}_0^+$ such that:
\begin{equation}
\label{contraction3ISS}
\langle{X},X\rangle_{F}+2\bigg\langle\frac{\partial{f}}{\partial{u}}Y,X\bigg\rangle_{G}\leq-\widehat\lambda\langle{X},X\rangle_{G}+\alpha{\langle{X},X\rangle^{\frac{1}{2}}_G}\langle{Y,Y}\rangle^{\frac{1}{2}}_{I_m},
\end{equation}
for $F(x,u)=\left(\frac{\partial{f}}{\partial{x}}\right)^TG(x)+G(x)\frac{\partial{f}}{\partial{x}}+\frac{\partial{G}}{\partial{x}}f(x,u)$, any \mbox{$X\in\mathbb{R}^n$}, $x\in\R^n$, $u\in\mathsf{U}$, and $Y\in\R^m$. The constant $\widehat\lambda$ is called the contraction rate. 
\end{definition}

The following theorem shows that existence of a contraction metric, with respect to states and inputs, (resp. with respect to states) implies $\delta_\exists$-ISS (resp. $\delta_\exists$-GAS).

\begin{theorem}
\label{theoremISS}
Let $\Sigma=(\mathbb{R}^n,\mathsf{U},\mathcal{U},f)$ be a smooth control system on $\R^n$ equipped with a Riemannian metric $G$ and let $\mathsf{U}$ be a convex set. If the metric $G$ is a contraction metric, with respect to states and inputs, (resp. with respect to states) for system $\Sigma$ and $\left(\R^n,\mathbf{d}_G\right)$ is a complete metric space, then $\Sigma$ is $\delta_\exists$-ISS (resp. $\delta_\exists$-GAS).
\end{theorem}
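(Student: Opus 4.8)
The plan is to show that a contraction metric gives rise to a $\delta_\exists$-GAS (resp. $\delta_\exists$-ISS) Lyapunov function with respect to the Riemannian distance $\mathbf{d}_G$, and then invoke the already-established Lyapunov characterizations. The natural candidate is $V(x,x') = \mathbf{d}_G(x,x')^2$ (or $\mathbf{d}_G(x,x')$ itself), for which condition (i) of Definition \ref{delta_GAS_Lya} holds trivially with $\underline\alpha=\overline\alpha=(\cdot)^2$. However, $\mathbf{d}_G$ need not be smooth, so a cleaner route — and the one I would actually carry out — works directly at the level of trajectories via the standard contraction-analysis argument, estimating the evolution of infinitesimal displacements along geodesics.

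The key steps, in order: First, recall the definition of $\mathbf{d}_G$: for $x,x'\in\R^n$, $\mathbf{d}_G(x,x')=\inf_\gamma \int_0^1 \sqrt{\langle \gamma'(s),\gamma'(s)\rangle_{G(\gamma(s))}}\,ds$ over piecewise-$C^1$ paths $\gamma$ joining $x$ to $x'$; completeness of $(\R^n,\mathbf{d}_G)$ (a hypothesis) ensures minimizing geodesics exist (Hopf–Rinow). Second, fix an input $\upsilon\in\mathcal{U}$ and initial states $x,x'$; let $\gamma_0$ be a minimizing geodesic from $x$ to $x'$ and flow the whole path under $\upsilon$, obtaining $\gamma_t(s)=\xi_{\gamma_0(s)\upsilon}(t)$, a path from $\xi_{x\upsilon}(t)$ to $\xi_{x'\upsilon}(t)$. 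Writing $X(t,s)=\frac{\partial}{\partial s}\gamma_t(s)$ for the displacement field, differentiate the Riemannian energy $E(t)=\int_0^1\langle X(t,s),X(t,s)\rangle_{G(\gamma_t(s))}\,ds$ in $t$: the integrand's derivative is exactly $\langle X,X\rangle_{F(\gamma_t(s),\upsilon(t))}$, which by (\ref{contractionGAS1}) is bounded above by $-\widehat\lambda\langle X,X\rangle_G$, giving $\dot E(t)\le -\widehat\lambda E(t)$ and hence $E(t)\le e^{-\widehat\lambda t}E(0)$. Third, since the length of $\gamma_t$ is at most $\sqrt{E(t)}$ (Cauchy–Schwarz over $s\in[0,1]$) while the length of $\gamma_0$ equals $\mathbf{d}_G(x,x')$ (with $E(0)=\mathbf{d}_G(x,x')^2$ as $\gamma_0$ is a constant-speed geodesic), we conclude
\begin{equation}\nonumber
\mathbf{d}_G\!\left(\xi_{x\upsilon}(t),\xi_{x'\upsilon}(t)\right)\le \mathrm{length}(\gamma_t)\le \sqrt{E(t)}\le e^{-\widehat\lambda t/2}\,\mathbf{d}_G(x,x'),
\end{equation}
which is precisely (\ref{delta_GAS}) with $\beta(r,t)=e^{-\widehat\lambda t/2}r$ and metric $\mathbf{d}=\mathbf{d}_G$. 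Forward completeness follows since $\mathbf{d}_G$-balls are bounded (completeness plus the exponential bound keeps trajectories from escaping in finite time). This settles the $\delta_\exists$-GAS case.

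For the $\delta_\exists$-ISS case I would run the same computation with two different inputs $\upsilon,\upsilon'$ and interpolate them: take $\upsilon_s(t)$ a path in $\mathsf{U}$ (using convexity of $\mathsf{U}$, e.g. the straight line) from $\upsilon(t)$ to $\upsilon'(t)$, so the displacement field now picks up an input-variation term. Differentiating $E(t)$ produces the extra contribution $2\langle \frac{\partial f}{\partial u}Y, X\rangle_G$ where $Y=\partial_s \upsilon_s$, and (\ref{contraction3ISS}) bounds the whole thing by $-\widehat\lambda E + \alpha \sqrt{E}\cdot(\text{input-variation length})$. A Gronwall/comparison argument on $\sqrt E$ then yields $\mathbf{d}_G(\xi_{x\upsilon}(t),\xi_{x'\upsilon'}(t)) \le e^{-\widehat\lambda t/2}\mathbf{d}_G(x,x') + \frac{\alpha}{\widehat\lambda/2}\,\|\upsilon-\upsilon'\|_\infty$, giving (\ref{delta_ISS}) with linear $\beta$ and $\gamma$. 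The main obstacle I anticipate is the regularity bookkeeping in Step 2: justifying differentiation under the integral sign and the formula $\frac{d}{dt}\langle X,X\rangle_{G(\gamma_t)} = \langle X,X\rangle_F$ requires knowing $\gamma_t(s)$ is jointly smooth enough in $(t,s)$ — which uses smoothness of $f$ and smooth dependence of solutions on initial conditions — and handling the fact that the minimizing geodesic $\gamma_0$ is only piecewise $C^1$ a priori; one sidesteps this either by a density/approximation argument over smooth paths joining $x$ to $x'$ (taking the infimum at the end) or by noting minimizing geodesics of a smooth metric are automatically smooth. The input-interpolation in the ISS case adds only measurable-in-$t$ dependence, which is handled by Carathéodory-type solution theory exactly as for the base trajectories.
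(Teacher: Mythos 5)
Your proposal is correct and follows essentially the same route as the paper's proof: flow a minimizing geodesic (obtained via Hopf--Rinow from completeness) under straight-line-interpolated inputs, differentiate a length/energy functional in time, apply the contraction inequality, and close with a comparison/Gronwall argument, bounding $\mathbf{d}_G$ of the flowed endpoints by the length of the flowed curve. The only differences are cosmetic bookkeeping (you use the energy $E(t)$ and pass to $\sqrt{E}$, the paper works with the length $l(t)$ directly, yielding the same differential inequality and the gain $\tfrac{\alpha}{\widehat\lambda}$ versus your $\tfrac{2\alpha}{\widehat\lambda}$, which is immaterial).
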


\begin{proof}
Since $\left(\R^n,\mathbf{d}_G\right)$ is a complete metric space, using the Hopf-Rinow theorem \cite{petersen}, we conclude that $\R^n$ with respect to the metric $G$ is geodesically complete. The rest of the proof is inspired by the proof of Theorem 2 in \cite{aghannan}. Consider two points $x$ and $x'$ in $\mathbb{R}^n$ and a geodesic $\chi:[0,1]\rightarrow{\mathbb{R}^n}$ joining $x=\chi(0)$ and $x'=\chi(1)$. The geodesic distance between the points $x$ and $x'$ is given by:
\begin{equation}
\mathbf{d}_G(x,x')=\int_0^1{\sqrt{\left(\frac{d\chi(s)}{ds}\right)^TG(\chi(s))\frac{d\chi(s)}{ds}}ds}. \label{geodesic}
\end{equation}
Consider the straight line $\widehat\chi_t(s)=(1-s)\upsilon(t)+s\upsilon'(t)$, for fixed $t\in\mathbb{R}_0^+$, fixed $\upsilon,\upsilon'\in \mathcal{U}$, and for any $s\in[0,1]$. The curve $\widehat{\chi}_t$ is a geodesic, with respect to the Euclidean metric, on the subset $\mathsf{U}\subseteq\mathbb{R}^m$ joining $\upsilon(t)=\widehat\chi_t(0)$ and $\upsilon'(t)=\widehat\chi_t(1)$. Consider also the input curve $\upsilon_s$ defined by $\upsilon_s(t)=\widehat{\chi}_t(s)$. Let ${l}(t)$ be the length of the curve $\xi_{\chi(s){\upsilon_s}}(t)$ parametrized by $s$ and with respect to the metric $G$, i.e.:
\begin{equation}
l(t)=\int_0^1\sqrt{\delta{\xi}^TG(\xi_{\chi(s){\upsilon_s}}(t))\delta{\xi}}ds,~~\text{with}~~\delta{\xi}=\frac{\partial}{\partial{s}}\xi_{\chi(s){\upsilon_s}}(t). \label{length}
\end{equation}
In the rest of the proof, we drop the argument of the metric $G$ for the sake of simplicity. By taking the derivative of (\ref{length}) with respect to time, we obtain:
\begin{align}\nonumber
\frac{d}{dt}{l}(t)=&\int_0^1\frac{\frac{d}{dt}\left(\delta{\xi}^TG\delta{\xi}\right)}{2\sqrt{\delta{\xi}^TG\delta{\xi}}}ds \notag \\
=&\int_0^1\frac{\delta{\xi}^T\left(\left(\frac{\partial{f}}{\partial{x}}\right)^TG+\frac{\partial{G}}{\partial{x}}f+G\frac{\partial{f}}{\partial{x}}\right)\delta{\xi}+2\delta{\upsilon}^T\left(\frac{\partial{f}}{\partial{u}}\right)^TG\delta{\xi}}{2\sqrt{\delta{\xi}^TG\delta{\xi}}}ds,~~\text{with}~~\delta{\upsilon}=\frac{\partial}{\partial{s}}\upsilon_s(t). \notag
\end{align}
Since $G$ is a contraction metric, with respect to states and inputs, with $\widehat\lambda$ and $\alpha$ the constants introduced in Definition \ref{contraction1}, the following inequality holds:
\begin{eqnarray}
\label{inequality3}
\frac{d}{dt}{l}(t)\leq-\frac{\widehat\lambda}{2}{l}(t)+\frac{\alpha}{2}\int_0^1\sqrt{\delta{\upsilon}^T\delta{\upsilon}}ds=-\frac{\widehat\lambda}{2}{l}(t)+\frac{\alpha}{2}\Vert\upsilon(t)-\upsilon'(t)\Vert.
\end{eqnarray}
Using (\ref{inequality3}) and the comparison principle \cite{khalil}, we obtain:
\begin{align}
{l}(t)\leq& \mathsf{e}^{-\frac{\widehat\lambda}{2}{t}}{l}(0)+\frac{\alpha}{2}\mathsf{e}^{-\frac{\widehat\lambda}{2}{t}}*\Vert\upsilon(t)-\upsilon'(t)\Vert \notag \\
\leq& \mathsf{e}^{-\frac{\widehat\lambda}{2}{t}}{l}(0)+\frac{\alpha}{{\widehat\lambda}}\left(1-\mathsf{e}^{-\frac{\widehat\lambda}{2}{t}}\right)\Vert\upsilon-\upsilon'\Vert_\infty\leq \mathsf{e}^{-\frac{\widehat\lambda}{2}{t}}{l}(0)+\frac{\alpha}{{\widehat\lambda}}\Vert\upsilon-\upsilon'\Vert_\infty, \notag
\end{align} 
where $*$ denotes the convolution integral\footnote{$\mathsf{e}^{-\frac{\widehat\lambda}{2}{t}}*\Vert\upsilon(t)-\upsilon'(t)\Vert=\int_0^t\mathsf{e}^{-\frac{\widehat\lambda}{2}{(t-\tau)}}\Vert\upsilon(\tau)-\upsilon'(\tau)\Vert d\tau$.}. From (\ref{geodesic}) and (\ref{length}), it can be seen that \mbox{$l(0)=\mathbf{d}_G(x,x')$}. However, for $t\in\mathbb{R}^+$, $l(t)$ is not necessarily the Riemannian distance function, determined by $G$, because $\xi_{\chi(s){\upsilon_s}}(t)$ is not necessarily a geodesic, implying that it is always bigger than or equal to the Riemannian distance function\footnote{Note that given a Riemannian metric $G$, the Riemannian distance function is the smallest distance, determined by $G$.}: $\mathbf{d}_G(\xi_{x\upsilon}(t),\xi_{x'{\upsilon'}}(t))\leq l(t)$, and, hence, the following inequality holds:
\begin{equation}\nonumber
\mathbf{d}_G\left(\xi_{x\upsilon}(t),\xi_{x'{\upsilon'}}(t)\right)\leq \mathsf{e}^{-\frac{\widehat\lambda}{2}{t}}\mathbf{d}_G(x,x')+\frac{\alpha}{{\widehat\lambda}}\Vert\upsilon-\upsilon'\Vert_{\infty},
\end{equation}
which, in turn, implies that $\Sigma$ is $\delta_\exists$-ISS. The proof for the case that $G$ is a contraction metric, with respect to states, can be readily verified by just enforcing $\delta\upsilon(t)=0$ and $\upsilon(t)=\upsilon'(t)$ for any $t\in\R_0^+$.
\end{proof}

Since completeness of the metric space $\left(\R^n,\mathbf{d}_G\right)$ is crucial to the previous proof, the following lemma provides a sufficient condition on the metric $G$ guaranteeing that $\left(\R^n,\mathbf{d}_G\right)$ is a complete metric space.

\begin{lemma}
\label{lemma20}
The Riemannian manifold $\R^n$ equipped with a Riemannian metric $G$, satisfying\footnote{This condition is nothing more than uniform positive definitness of $G$.} $\underline{\omega}\Vert{y}\Vert^2\leq y^TG(x)y$ for any $x,y\in\R^n$ and for some positive constant $\underline{\omega}$, is complete as a metric space, with respect to $\mathbf{d}_G$.
\end{lemma}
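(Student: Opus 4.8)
The plan is to sandwich $\mathbf{d}_G$ between multiples of the Euclidean metric — a \emph{global} lower bound coming from the uniform positive definiteness hypothesis, and a \emph{local} upper bound coming from smoothness of $G$ — and then to deduce completeness from completeness of the Euclidean metric on $\R^n$. Recall that for $x,x'\in\R^n$ one sets $\mathbf{d}_G(x,x')=\inf\int_0^1\sqrt{\dot\gamma(s)^TG(\gamma(s))\dot\gamma(s)}\,ds$, the infimum taken over piecewise $C^1$ curves $\gamma:[0,1]\to\R^n$ with $\gamma(0)=x$, $\gamma(1)=x'$; since $\R^n$ is path-connected and $G$ is continuous (hence bounded on the compact segment joining $x$ and $x'$), this infimum is finite, and together with the lower bound below it follows that $\mathbf{d}_G$ is indeed a metric.

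First I would establish the global lower bound. For any admissible curve $\gamma$, the hypothesis $\underline{\omega}\Vert y\Vert^2\le y^TG(x)y$ gives $\sqrt{\dot\gamma(s)^TG(\gamma(s))\dot\gamma(s)}\ge\sqrt{\underline{\omega}}\,\Vert\dot\gamma(s)\Vert$, so the $G$-length of $\gamma$ is at least $\sqrt{\underline{\omega}}\int_0^1\Vert\dot\gamma(s)\Vert\,ds\ge\sqrt{\underline{\omega}}\,\Vert\int_0^1\dot\gamma(s)\,ds\Vert=\sqrt{\underline{\omega}}\,\Vert x-x'\Vert$. Taking the infimum over $\gamma$ yields $\mathbf{d}_G(x,x')\ge\sqrt{\underline{\omega}}\,\Vert x-x'\Vert$ for all $x,x'\in\R^n$.

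Next I would take a sequence $\{x_n\}_{n=1}^\infty$ that is Cauchy with respect to $\mathbf{d}_G$. By the lower bound, $\Vert x_n-x_m\Vert\le\frac{1}{\sqrt{\underline{\omega}}}\,\mathbf{d}_G(x_n,x_m)$, so $\{x_n\}$ is Cauchy for the Euclidean metric and hence converges to some $x^*\in\R^n$ in the Euclidean sense. It remains to upgrade this to convergence in $\mathbf{d}_G$. Here I would use that $G$ is smooth, hence continuous, hence bounded on the compact ball $\mathcal{B}_1(x^*)$: there is $\overline{\omega}>0$ with $y^TG(x)y\le\overline{\omega}\Vert y\Vert^2$ for all $x\in\mathcal{B}_1(x^*)$ and $y\in\R^n$. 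For all $n$ large enough that $\Vert x_n-x^*\Vert<1$, the straight segment $s\mapsto(1-s)x_n+sx^*$ stays inside $\mathcal{B}_1(x^*)$, and bounding $\mathbf{d}_G(x_n,x^*)$ by the $G$-length of this particular curve gives $\mathbf{d}_G(x_n,x^*)\le\sqrt{\overline{\omega}}\,\Vert x_n-x^*\Vert\to0$. Thus $x_n\to x^*$ in $\mathbf{d}_G$, and $\left(\R^n,\mathbf{d}_G\right)$ is complete.

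The only genuinely delicate point is this last step: Euclidean convergence does not imply $\mathbf{d}_G$-convergence in general, and it is precisely here that smoothness (or even just local boundedness) of $G$ is indispensable, supplying the local upper comparison $\mathbf{d}_G\lesssim\Vert\cdot\Vert$ near $x^*$. The rest — the global length estimate and the passage to a Euclidean-Cauchy sequence — is routine.
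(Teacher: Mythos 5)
Your proposal is correct and follows essentially the same route as the paper's proof: the uniform lower bound $\mathbf{d}_G\geq\sqrt{\underline\omega}\,\Vert\cdot\Vert$ turns a $\mathbf{d}_G$-Cauchy sequence into a Euclidean-Cauchy one with limit $x^*$, and a local upper bound on $G$ over a compact convex neighborhood of $x^*$ (the paper invokes Lemma 8.18 of \cite{lee}, you get it directly from continuity of $G$ on $\mathcal{B}_1(x^*)$ together with the straight-segment length estimate) upgrades Euclidean convergence to $\mathbf{d}_G$-convergence.
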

\begin{proof}
The proof was suggested to us by C. Manolescu. First, for each pair of points $x,y\in\R^n$ we define the path space:
\begin{equation}
\nonumber
\Omega(x,y)=\{\chi:[0,1]\rightarrow\R^n\,\,\vert\,\,\chi~\text{is piecewise smooth},~\chi(0)=x,~\text{and}~\chi(1)=y\}.
\end{equation}
Recall that a function $\chi:[a,b]\rightarrow\R^n$ is piecewise smooth if $\chi$ is continuous and if there exists a partitioning \mbox{$a=a_1<a_2<\cdots<a_k=b$} of $[a,b]$ such that $\chi|_{(a_i,a_{i+1})}$ is smooth for $i=1,\cdots,k-1$. We can then define the Riemannian distance function $\mathbf{d}_G(x,y)$ between points $x,y\in\R^n$ as
\begin{equation}
\nonumber
\mathbf{d}_G(x,y)=\inf_{\chi\in\Omega(x,y)}\int_0^1{\sqrt{\left(\frac{d\chi(s)}{ds}\right)^TG(\chi(s))\frac{d\chi(s)}{ds}}ds}.
\end{equation}
It follows immediately that $\mathbf{d}_G$ is a metric on $\R^n$. The Riemannian manifold $\R^n$ is a complete metric space, equipped with the metric $\mathbf{d}_G$, if every Cauchy sequence\footnote{A sequence $\{x_n\}_{n=1}^\infty$ in a metric space $X$, equipped with a metric $\mathbf{d}$, is a Cauchy sequence if $\lim_{n,m\rightarrow\infty} \mathbf{d}(x_n,x_m)=0$.} of points in $\R^n$ has a limit in $\R^n$. Assume $\{x_n\}_{n=1}^\infty$ is a Cauchy sequence in $\R^n$, equipped with the metric $\mathbf{d}_G$. By using the assumption on $G$, we have
\begin{align}
\label{cauchy}
\mathbf{d}_G(x_n,x_m)=&\inf_{\chi\in\Omega(x_n,x_m)}\int_0^1{\sqrt{\left(\frac{d\chi(s)}{ds}\right)^TG(\chi(s))\frac{d\chi(s)}{ds}}ds}\\ \notag&\geq\sqrt{\underline\omega}\inf_{\chi\in\Omega(x_n,x_m)}\int_0^1{\sqrt{\left(\frac{d\chi(s)}{ds}\right)^T\frac{d\chi(s)}{ds}}ds}=\sqrt{\underline\omega}\Vert{x_n}-x_m\Vert.
\end{align} 
It is readily seen from the inequality (\ref{cauchy}) that the sequence $\{x_n\}_{n=1}^\infty$ is also a Cauchy sequence in $\R^n$ with respect to the Euclidean metric. Since the Riemannian manifold $\R^n$ with respect to the Euclidean metric is a complete metric space, the sequence $\{x_n\}_{n=1}^\infty$ converges to a point, named $x^*$, in $\R^n$. By picking a convex compact subset $D\subset\R^n$, containing $x^*$, and using Lemma 8.18 in \cite{lee}, we have: \mbox{$\overline{\omega}\Vert{y}\Vert^2\geq y^TG(x)y$} for any $y\in\R^n$, $x\in{D}$, and some positive constant $\overline{\omega}$. Since the sequence $\{x_n\}_{n=1}^\infty$ converges to $x^*\in D$, there exists some integer $N$ such that the sequence $\{x_n\}_{n=N}^\infty$ remains forever inside $D$. Hence, we have:
\begin{equation}\nonumber
\sqrt{\underline\omega}\Vert{x_n}-x^*\Vert\leq \mathbf{d}_G(x_n,x^*)\leq\sqrt{\overline\omega}\Vert{x_n}-x^*\Vert,
\end{equation}
for any $n>N$. Therefore, the sequence $\{x_n\}_{n=1}^\infty$ converges to $x^*\in\R^n$, equipped with the metric $\mathbf{d}_G$. Therefore, $\R^n$ with respect to the metric $\mathbf{d}_G$ is a complete metric space.

\end{proof}

Resuming, in this section we have provided a characterization of $\delta_{\exists}$-GAS and $\delta_{\exists}$-ISS in terms of the existence of $\delta_{\exists}$-GAS and $\delta_{\exists}$-ISS Lyapunov functions and we have provided sufficient conditions for $\delta_{\exists}$-GAS and $\delta_{\exists}$-ISS in terms of the existence of a contraction metric. Based on these results, in the next section, we propose a backstepping controller design procedure, providing controllers rendering control systems incrementally stable. Additionally, we will provide incremental Lyapunov functions and contraction metrics (the latter for smooth control systems).

\section{Backstepping Design Procedure}\label{back}
The backsteping method proposed here is inspired by the backstepping method, described in \cite{pavlov}. Here, we will extend this approach to render the closed-loop system $\delta_\exists$-ISS and to construct $\delta_\exists$-ISS Lyapunov functions. Consider the following subclass of control systems:
\begin{equation}
    \Sigma:\left\{
                \begin{array}{ll}
                 \dot{\eta}=f(\eta,\zeta),\\
                 \dot{\zeta}=\upsilon,
                \end{array}
                \right.
\label{system1}
\end{equation}
where $x=\left[y^T,z^T\right]\in\R^{n_\eta+n_\zeta}$ is the state of $\Sigma$, $y$ and $z$ are initial conditions for $\eta$, $\zeta$-subsystems, respectively, and $\upsilon$ is the control input. 

In support of the main result of this section (Theorem \ref{theorem1}), we need the following technical result.

\begin{lemma}\label{lemma1}
Consider the following interconnected control system
\begin{equation}
    \Sigma:\left\{
                \begin{array}{ll}
                 \dot{\eta}=f(\eta,\zeta,\upsilon),\\
                 \dot{\zeta}=g(\zeta,\upsilon).
                \end{array}
                \right.
\label{system2}
\end{equation}
Let the $\eta$-subsystem be $\delta_\exists$-ISS with respect to $\zeta$, $\upsilon$ and let the $\zeta$-subsystem be $\delta_\exists$-ISS with respect to $\upsilon$ for some metrics $\mathbf{d}_\eta$ and $\mathbf{d}_\zeta$, respectively such that the solutions $\eta_{y\zeta\upsilon}$\footnote{Notation $\eta_{y\zeta\upsilon}$ denotes a trajectory of $\eta$-subsystem under the inputs $\zeta$ and $\upsilon$ from initial condition $y\in\R^{n_\eta}$.} and $\zeta_{z\upsilon}$ satisfy the following inequalities:
\begin{align}
\mathbf{d}_\eta\left(\eta_{y\zeta\upsilon}(t),\eta_{y'\zeta'\upsilon'}(t)\right)&\leq\beta_\eta\left(\mathbf{d}_\eta\left(y,y'\right),t\right)+\gamma_\zeta\left(\mathbf{d}_\zeta(\zeta,\zeta')_\infty\right)+\gamma_\upsilon\left(\left\Vert{\upsilon}-{\upsilon}'\right\Vert_{\infty}\right),\label{y-sub}\\
\mathbf{d}_\zeta\left(\zeta_{z\upsilon}(t),\zeta_{z'\upsilon'}(t)\right)&\leq\beta_\zeta\left(\mathbf{d}_\zeta\left(z,z'\right),t\right)+\widetilde\gamma_\upsilon\left(\left\Vert{\upsilon}-{\upsilon}'\right\Vert_{\infty}\right),\label{z-sub}
\end{align}
where $y,y'$ and $z,z'$ are the initial conditions for the $\eta$, $\zeta$-subsystems, respectively. Then, the interconnected control system $\Sigma$ in (\ref{system2}) is $\delta_\exists$-ISS with respect to $\upsilon$. 
\end{lemma}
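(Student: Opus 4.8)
The plan is to establish the combined $\mathcal{KL}$-plus-$\mathcal{K}_\infty$ estimate of Definition~\ref{dISS} for the metric $\mathbf{d}=\mathbf{d}_\eta+\mathbf{d}_\zeta$ on $\R^{n_\eta+n_\zeta}$ by a standard cascade argument. First I would check that $\mathbf{d}$ is indeed a metric (immediate, as a sum of metrics on the factors), and fix initial conditions $x=[y^T,z^T]^T$, $x'=[y'^T,z'^T]^T$ and inputs $\upsilon,\upsilon'\in\mathcal{U}$. The $\zeta$-component is already handled by~(\ref{z-sub}): we directly have $\mathbf{d}_\zeta(\zeta_{z\upsilon}(t),\zeta_{z'\upsilon'}(t))\leq\beta_\zeta(\mathbf{d}_\zeta(z,z'),t)+\widetilde\gamma_\upsilon(\Vert\upsilon-\upsilon'\Vert_\infty)$, so it only remains to control the $\eta$-component and then add.

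The substantive step is to feed the $\zeta$-bound into~(\ref{y-sub}). The obstacle is that~(\ref{y-sub}) requires $\mathbf{d}_\zeta(\zeta,\zeta')_\infty$, i.e. the \emph{supremum over all of $[0,\infty)$} of the distance between the two $\zeta$-trajectories, whereas~(\ref{z-sub}) bounds this distance pointwise by something that does not decay to zero (the $\widetilde\gamma_\upsilon$ term persists). Taking the supremum of~(\ref{z-sub}) over $t\geq0$ and using that $\beta_\zeta(r,\cdot)$ is decreasing gives $\mathbf{d}_\zeta(\zeta_{z\upsilon},\zeta_{z'\upsilon'})_\infty\leq\beta_\zeta(\mathbf{d}_\zeta(z,z'),0)+\widetilde\gamma_\upsilon(\Vert\upsilon-\upsilon'\Vert_\infty)$. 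Plugging this into~(\ref{y-sub}) and using a standard $\mathcal{K}_\infty$ subadditivity estimate of the form $\gamma(a+b)\leq\gamma(2a)+\gamma(2b)$ yields
\begin{align}\nonumber
\mathbf{d}_\eta\left(\eta_{y\zeta\upsilon}(t),\eta_{y'\zeta'\upsilon'}(t)\right)&\leq\beta_\eta\left(\mathbf{d}_\eta(y,y'),t\right)+\gamma_\zeta\!\left(2\beta_\zeta\left(\mathbf{d}_\zeta(z,z'),0\right)\right)\\\notag&\quad+\gamma_\zeta\!\left(2\widetilde\gamma_\upsilon\left(\Vert\upsilon-\upsilon'\Vert_\infty\right)\right)+\gamma_\upsilon\left(\Vert\upsilon-\upsilon'\Vert_\infty\right).
\end{align}
The term $\gamma_\zeta(2\beta_\zeta(\mathbf{d}_\zeta(z,z'),0))$ is not yet in $\mathcal{KL}$ form in $t$, so the remaining maneuver is the classical cascade trick: split the time axis at $t/2$. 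For $t\leq T$ (where $T$ is chosen from the decay rate of $\beta_\zeta$) one keeps the crude bound; for $t>t/2$ one instead uses $\mathbf{d}_\zeta(\zeta_{z\upsilon}(s),\zeta_{z'\upsilon'}(s))_{[t/2,\infty)}\leq\beta_\zeta(\mathbf{d}_\zeta(z,z'),t/2)+\widetilde\gamma_\upsilon(\Vert\upsilon-\upsilon'\Vert_\infty)$, together with the causality/semigroup property (the $\eta$-trajectory from time $t/2$ onward depends only on inputs on $[t/2,t]$). This produces a genuine $\mathcal{KL}$ function $\widetilde\beta_\eta(\mathbf{d}_\eta(y,y')+\mathbf{d}_\zeta(z,z'),t)$ bounding the transient part, plus a $\mathcal{K}_\infty$ gain in $\Vert\upsilon-\upsilon'\Vert_\infty$; this is exactly the standard lemma that a cascade of $\delta_\exists$-ISS systems is $\delta_\exists$-ISS, and I expect the paper to invoke it (e.g. via the corresponding result on ISS cascades in the references, adapted to the incremental setting).

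Finally I would assemble the pieces: adding the $\eta$-bound and the $\zeta$-bound~(\ref{z-sub}), and using subadditivity of $\mathcal{K}_\infty$ functions once more to combine $\widetilde\beta_\eta(\mathbf{d}_\eta(y,y')+\mathbf{d}_\zeta(z,z'),t)+\beta_\zeta(\mathbf{d}_\zeta(z,z'),t)$ into a single $\mathcal{KL}$ function $\beta$ of $\mathbf{d}(x,x')=\mathbf{d}_\eta(y,y')+\mathbf{d}_\zeta(z,z')$, and to merge all the input gains into one $\mathcal{K}_\infty$ function $\gamma$, gives
\[
\mathbf{d}\left(\xi_{x\upsilon}(t),\xi_{x'\upsilon'}(t)\right)\leq\beta\left(\mathbf{d}(x,x'),t\right)+\gamma\left(\Vert\upsilon-\upsilon'\Vert_\infty\right),
\]
which is~(\ref{delta_ISS}). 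Forward completeness of $\Sigma$ follows from forward completeness of the two subsystems (the $\zeta$-trajectory is globally defined by its own $\delta_\exists$-ISS property, hence bounded on compacts, and then the $\eta$-trajectory is globally defined by its $\delta_\exists$-ISS property with $\zeta$ as a well-defined input). The only delicate point, as noted, is the $t/2$-splitting needed to convert the composed gain-of-$\mathcal{KL}$ term into an honest $\mathcal{KL}$ bound; everything else is bookkeeping with $\mathcal{K}_\infty$ subadditivity.
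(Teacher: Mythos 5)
Your proposal is correct and follows essentially the same route as the paper's own proof (which adapts Proposition 4.7 of \cite{angeli}): the sum metric $\mathbf{d}(x,x')=\mathbf{d}_\eta(y,y')+\mathbf{d}_\zeta(z,z')$, the restart of the $\eta$-subsystem at time $t/2$ with a crude bound on the state at $t/2$ and the decayed tail bound $\mathbf{d}_\zeta(\zeta,\zeta')_{[t/2,\infty)}\leq\beta_\zeta(\mathbf{d}_\zeta(z,z'),t/2)+\widetilde\gamma_\upsilon(\Vert\upsilon-\upsilon'\Vert_\infty)$, together with the weak triangle/subadditivity estimates for $\mathcal{K}_\infty$ and $\mathcal{KL}$ functions. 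The paper simply carries out the nested chain of inequalities explicitly, yielding explicit $\widehat\beta,\widetilde\beta\in\mathcal{KL}$ and $\widehat\gamma\in\mathcal{K}_\infty$, which your sketch correctly identifies but leaves as bookkeeping.
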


\begin{proof}
The proof was inspired by the proof of Proposition 4.7 in \cite{angeli}. The essential differences lie in the choice of the metric for the overall system $\Sigma$ using the metrics for $\eta$, $\zeta$-subsystems. We provide the proof so that it can be easily compared with the proof in \cite{angeli}. Using (\ref{y-sub}), (\ref{z-sub}) and triangular inequality, the following chain of inequalities hold:
\begin{align}
\nonumber
\mathbf{d}_\eta\left(\eta_{y\zeta\upsilon}(t),\eta_{y'\zeta'\upsilon'}(t)\right)&\leq\beta_\eta\left(\mathbf{d}_\eta\left(\eta_{y\zeta\upsilon}(t/2),\eta_{y'\zeta'\upsilon'}(t/2)\right),t/2\right)
+\gamma_\zeta\left(\mathbf{d}_\zeta\left(\zeta,\zeta'\right)_{[t/2,\infty)}\right)+\gamma_\upsilon\left(\left\Vert{\upsilon}-{\upsilon}'\right\Vert_{[t/2,\infty)}\right)\\\notag
&\leq\beta_\eta\left(\beta_\eta\left(\mathbf{d}_\eta(y,y'),t/2\right)+\gamma_\zeta\left(\mathbf{d}_\zeta\left(\zeta,\zeta'\right)_\infty\right)+\gamma_\upsilon\left(\left\Vert{\upsilon}-{\upsilon}'\right\Vert_{\infty}\right),t/2\right)\\\notag
&~+\gamma_\zeta\left(\mathbf{d}_\zeta\left(\zeta,\zeta'\right)_{[t/2,\infty)}\right)+\gamma_\upsilon\left(\left\Vert{\upsilon}-{\upsilon}'\right\Vert_{[t/2,\infty)}\right)\\\notag
&\leq\beta_\eta\left(3\beta_\eta(\mathbf{d}_\eta(y,y'),t/2),t/2\right)+\beta_\eta\left(3\gamma_\zeta\left(\mathbf{d}_\zeta\left(\zeta,\zeta'\right)_\infty\right),t/2\right)+\beta_\eta\left(3\gamma_\upsilon\left(\Vert\upsilon-\upsilon'\Vert_\infty\right),0\right)\\\notag
&~+\gamma_\zeta\left(\mathbf{d}_\zeta\left(\zeta,\zeta'\right)_{[t/2,\infty)}\right)+\gamma_\upsilon\left(\left\Vert{\upsilon}-{\upsilon}'\right\Vert_{[t/2,\infty)}\right)\\\notag
&\leq\beta_\eta\left(3\beta_\eta(\mathbf{d}_\eta(y,y'),t/2),t/2\right)+\beta_\eta\left(3\gamma_\zeta\left(\beta_\zeta\left(\mathbf{d}_\zeta\left(z,z'\right),0\right)+\widetilde\gamma_\upsilon\left(\left\Vert{\upsilon}-{\upsilon}'\right\Vert_{\infty}\right)\right),t/2\right)\\\notag
&~+\beta_\eta\left(3\gamma_\upsilon\left(\Vert\upsilon-\upsilon'\Vert_\infty\right),0\right)+\gamma_\zeta\left(\mathbf{d}_\zeta\left(\zeta,\zeta'\right)_{[t/2,\infty)}\right)+\gamma_\upsilon\left(\left\Vert{\upsilon}-{\upsilon}'\right\Vert_{[t/2,\infty)}\right)\\\notag
&\leq\beta_\eta\left(3\beta_\eta(\mathbf{d}_\eta(y,y'),t/2),t/2\right)+\beta_\eta\left(3\gamma_\zeta\left(2\beta_\zeta\left(\mathbf{d}_\zeta\left(z,z'\right),0\right)\right),t/2\right)\\\notag
&~+\beta_\eta\left(3\gamma_\zeta\left(2\widetilde\gamma_\upsilon\left(\left\Vert{\upsilon}-{\upsilon}'\right\Vert_{\infty}\right)\right),t/2\right)+\beta_\eta\left(3\gamma_\upsilon\left(\Vert\upsilon-\upsilon'\Vert_\infty\right),0\right)\\\notag
&~+\gamma_\zeta\left(\mathbf{d}_\zeta\left(\zeta,\zeta'\right)_{[t/2,\infty)}\right)+\gamma_\upsilon\left(\left\Vert{\upsilon}-{\upsilon}'\right\Vert_{[t/2,\infty)}\right)\\\notag
&\leq\beta_\eta\left(3\beta_\eta(\mathbf{d}_\eta(y,y'),t/2),t/2\right)+\beta_\eta\left(3\gamma_\zeta\left(2\beta_\zeta\left(\mathbf{d}_\zeta\left(z,z'\right),0\right)\right),t/2\right)\\\notag
&~+\beta_\eta\left(3\gamma_\zeta\left(2\widetilde\gamma_\upsilon\left(\left\Vert{\upsilon}-{\upsilon}'\right\Vert_{\infty}\right)\right),0\right)+\beta_\eta\left(3\gamma_\upsilon\left(\Vert\upsilon-\upsilon'\Vert_\infty\right),0\right)\\\notag
&~+\gamma_\zeta\left(\beta_\zeta\left(\mathbf{d}_\zeta\left(z,z'\right),t/2\right)+\widetilde\gamma_\upsilon\left(\Vert\upsilon-\upsilon'\Vert_\infty\right)\right)+\gamma_\upsilon\left(\left\Vert{\upsilon}-{\upsilon}'\right\Vert_{[t/2,\infty)}\right)\\\notag
&\leq\beta_\eta\left(3\beta_\eta(\mathbf{d}_\eta(y,y'),t/2),t/2\right)+\beta_\eta\left(3\gamma_\zeta\left(2\beta_\zeta\left(\mathbf{d}_\zeta\left(z,z'\right),0\right)\right),t/2\right)\\\notag
&~+\beta_\eta\left(3\gamma_\zeta\left(2\widetilde\gamma_\upsilon\left(\left\Vert{\upsilon}-{\upsilon}'\right\Vert_{\infty}\right)\right),0\right)+\beta_\eta\left(3\gamma_\upsilon\left(\Vert\upsilon-\upsilon'\Vert_\infty\right),0\right)\\\notag
&~+\gamma_\zeta\left(2\beta_\zeta\left(\mathbf{d}_\zeta\left(z,z'\right),t/2\right)\right)+\gamma_\zeta\left(2\widetilde\gamma_\upsilon\left(\Vert\upsilon-\upsilon'\Vert_\infty\right)\right)+\gamma_\upsilon\left(\left\Vert{\upsilon}-{\upsilon}'\right\Vert_{\infty}\right)\\\label{y-sub-grow}
&\leq\widehat\beta\left(\mathbf{d}_\eta(y,y'),t\right)+\widetilde\beta\left(\mathbf{d}_\zeta(z,z'),t\right)+\widehat\gamma\left(\Vert\upsilon-\upsilon'\Vert_\infty\right),
\end{align}
where $\widehat\gamma\in\mathcal{K}_\infty$ and $\widehat\beta,\widetilde\beta\in\mathcal{KL}$ are defined as following:
\begin{align}
\nonumber
\widehat\gamma(r)&=\beta_\eta\left(3\gamma_\zeta\left(2\widetilde\gamma_\upsilon(r)\right),0\right)+\beta_\eta\left(3\gamma_\upsilon(r),0\right)+\gamma_\zeta\left(2\widetilde\gamma_\upsilon(r)\right)+\gamma_\upsilon(r),\\\notag
\widehat\beta(r,t)&=\beta_\eta\left(3\beta_\eta\left(r,t/2\right),t/2\right),\\\notag
\widetilde\beta(r,t)&=\beta_\eta\left(3\gamma_\zeta\left(2\beta_\zeta(r,0)\right),t/2\right)+\gamma_\zeta\left(2\beta_\zeta(r,t/2)\right).
\end{align} 
Now we define a new metric $\mathbf{d}:\R^{n_\eta+n_\zeta}\times\R^{n_\eta+n_\zeta}\rightarrow\R^+_0$ by: 
\begin{equation}\nonumber
\mathbf{d}(x,x')=\mathbf{d}_\eta(y,y')+\mathbf{d}_\zeta(z,z'),
\end{equation}
for any $x=\left[{y}^T,{z}^T\right]^T\in\R^{n_\eta+n_\zeta}$ and $x'=\left[{y'}^T,{z'}^T\right]^T\in\R^{n_\eta+n_\zeta}$. It can be readily checked that $\mathbf{d}$ satisfies all three conditions of a metric. By defining $\xi_{x\upsilon}=\left[\eta_{y\zeta\upsilon}^T,\zeta_{z\upsilon}^T\right]^T$, using inequalities (\ref{z-sub}) and (\ref{y-sub-grow}), and for any $t\in\R_0^+$, any $x,x'\in\R^{n_\eta+n_\zeta}$, and any $\upsilon,\upsilon'\in\mathcal{U}$, we obtain:
\begin{eqnarray}\notag
\mathbf{d}\left(\xi_{x\upsilon}(t),\xi_{x'\upsilon'}(t)\right)&=&\mathbf{d}_\eta\left(\eta_{y\zeta\upsilon}(t),\eta_{y'\zeta'\upsilon'}(t)\right)+\mathbf{d}_\zeta\left(\zeta_{z\upsilon}(t),\zeta_{z'\upsilon'}(t)\right)\\\notag
&\leq&\widehat\beta\left(\mathbf{d}_\eta(y,y'),t\right)+\widetilde\beta\left(\mathbf{d}_\zeta(z,z'),t\right)+\widehat\gamma\left(\Vert\upsilon-\upsilon'\Vert_\infty\right)\\\notag
&&+\beta_\zeta\left(\mathbf{d}_\zeta\left(z,z'\right),t\right)+\widetilde\gamma_\upsilon\left(\left\Vert{\upsilon}-{\upsilon}'\right\Vert_{\infty}\right)\\\notag
&\leq&\widehat\beta\left(\mathbf{d}_\eta(y,y')+\mathbf{d}_\zeta(z,z'),t\right)+\widetilde\beta\left(\mathbf{d}_\eta(y,y')+\mathbf{d}_\zeta(z,z'),t\right)\\\notag
&&+\beta_\zeta\left(\mathbf{d}_\eta(y,y')+\mathbf{d}_\zeta\left(z,z'\right),t\right)+\widehat\gamma\left(\Vert\upsilon-\upsilon'\Vert_\infty\right)+\widetilde\gamma_\upsilon\left(\left\Vert{\upsilon}-{\upsilon}'\right\Vert_{\infty}\right)\\\notag
&\leq&\beta\left(\mathbf{d}(x,x'),t\right)+\gamma\left(\Vert\upsilon-\upsilon'\Vert_\infty\right),
\end{eqnarray}
where $\beta\in\mathcal{KL}$ and $\gamma\in\mathcal{K}_\infty$ are defined as following:
\begin{eqnarray}
\nonumber
\beta(r,t)&=&\widehat\beta(r,t)+\widetilde\beta(r,t)+\beta_\zeta(r,t),\\\notag
\gamma(r)&=&\widehat\gamma(r)+\widetilde\gamma_\upsilon(r).
\end{eqnarray}
Hence, the overall system $\Sigma$ of the form (\ref{system2}) is $\delta_\exists$-ISS with respect to $\upsilon$. 

\end{proof}

We can now state the main result, on a backstepping controller design approach for the control system $\Sigma$ in (\ref{system1}), rendering the resulting closed-loop system $\delta_\exists$-ISS.
\begin{theorem}\label{theorem1}
Consider the control system $\Sigma$ of the form (\ref{system1}). Suppose there exists a continuously differentiable function $\psi:\R^{n_\eta}\rightarrow\R^{n_\zeta}$ such that the control system 
\begin{equation}\label{eta_sub}
\Sigma_\eta: \dot\eta=f(\eta,\psi(\eta)+\widetilde\upsilon)
\end{equation}
is $\delta_\exists$-ISS with respect to the input $\widetilde\upsilon$. Then for any $\lambda\in\R^+$, the state feedback control law: 
\begin{equation}\label{law}
\upsilon=k(\eta,\zeta,\widehat\upsilon)=-\lambda(\zeta-\psi(\eta))+\frac{\partial{\psi}}{\partial{y}}(\eta)f(\eta,\zeta)+\widehat\upsilon
\end{equation}
renders the control system $\Sigma$ $\delta_\exists$-ISS with respect to the input $\widehat\upsilon$.
\end{theorem}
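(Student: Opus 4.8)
The plan is to carry out the standard backstepping change of coordinates, thereby turning the closed-loop system into a cascade to which Lemma \ref{lemma1} applies, and then to transport the conclusion back through the coordinate change using the coordinate-invariance of $\delta_\exists$-ISS. First I would introduce the error variable $e=\zeta-\psi(\eta)$. Along closed-loop solutions of $\Sigma$ under the feedback (\ref{law}) one has $\dot e=\dot\zeta-\frac{\partial\psi}{\partial y}(\eta)\dot\eta=\upsilon-\frac{\partial\psi}{\partial y}(\eta)f(\eta,\zeta)$, and the choice (\ref{law}) is designed precisely to cancel the term $\frac{\partial\psi}{\partial y}(\eta)f(\eta,\zeta)$, so that in the $(\eta,e)$-coordinates the closed-loop system reads
\[
\widehat\Sigma:\quad\begin{cases}\dot\eta=f(\eta,\psi(\eta)+e),\\\dot e=-\lambda e+\widehat\upsilon.\end{cases}
\]
The map $T:(\eta,\zeta)\mapsto(\eta,\zeta-\psi(\eta))$ is a $C^1$ diffeomorphism of $\R^{n_\eta+n_\zeta}$, with inverse $(\eta,e)\mapsto(\eta,e+\psi(\eta))$, and it carries trajectories of the closed-loop $\Sigma$ bijectively onto trajectories of $\widehat\Sigma$ driven by the same input $\widehat\upsilon$.

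Next I would check that the two subsystems of $\widehat\Sigma$ meet the hypotheses of Lemma \ref{lemma1}. The $e$-subsystem $\dot e=-\lambda e+\widehat\upsilon$ is linear; by the variation-of-constants formula, for all initial conditions $z,z'$ and all inputs $\widehat\upsilon,\widehat\upsilon'$ it satisfies $\Vert e(t)-e'(t)\Vert\le\mathsf{e}^{-\lambda t}\Vert z-z'\Vert+\frac{1}{\lambda}\Vert\widehat\upsilon-\widehat\upsilon'\Vert_\infty$, so it is $\delta_\exists$-ISS with respect to $\widehat\upsilon$ for the Euclidean metric, yielding an estimate of the form (\ref{z-sub}) with $\beta_\zeta(r,t)=\mathsf{e}^{-\lambda t}r\in\mathcal{KL}$ and $\widetilde\gamma_\upsilon(r)=r/\lambda\in\mathcal{K}_\infty$. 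For the $\eta$-subsystem, the hypothesis says $\Sigma_\eta$ in (\ref{eta_sub}) is $\delta_\exists$-ISS with respect to $\widetilde\upsilon$; identifying $\widetilde\upsilon$ with $e$ (which, being a solution of a linear ODE with locally essentially bounded forcing, is an admissible $\R^{n_\zeta}$-valued input) and observing that the $\eta$-dynamics do not involve $\widehat\upsilon$ at all, this gives an estimate of the form (\ref{y-sub}) with $\gamma_\zeta$ equal to the ISS gain of $\Sigma_\eta$ and $\gamma_\upsilon$ taken to be any $\mathcal{K}_\infty$ function, the extra nonnegative term merely weakening the inequality.

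With these estimates in hand, and noting that $\widehat\Sigma$ is precisely an interconnection of the form (\ref{system2}), Lemma \ref{lemma1} applies and produces a metric $\mathbf{d}$ on $\R^{n_\eta+n_\zeta}$, a $\mathcal{KL}$ function $\beta$ and a $\mathcal{K}_\infty$ function $\gamma$ such that $\widehat\Sigma$ is $\delta_\exists$-ISS with respect to $\widehat\upsilon$; in particular $\widehat\Sigma$ is forward complete. To finish, define $\mathbf{d}'(x,x'):=\mathbf{d}(T(x),T(x'))$; since $T$ is a bijection, $\mathbf{d}'$ is a metric on $\R^{n_\eta+n_\zeta}$. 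Because $T$ sends closed-loop trajectories of $\Sigma$ to trajectories of $\widehat\Sigma$ under the same input, the $\delta_\exists$-ISS estimate for $\widehat\Sigma$ pulls back to
\[
\mathbf{d}'\!\left(\xi_{x\widehat\upsilon}(t),\xi_{x'\widehat\upsilon'}(t)\right)\le\beta\!\left(\mathbf{d}'(x,x'),t\right)+\gamma\!\left(\Vert\widehat\upsilon-\widehat\upsilon'\Vert_\infty\right),
\]
for all $t\in\R_0^+$, all $x,x'\in\R^{n_\eta+n_\zeta}$ and all $\widehat\upsilon,\widehat\upsilon'$, while forward completeness is inherited because $T$ is a global diffeomorphism. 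Hence the closed-loop $\Sigma$ is $\delta_\exists$-ISS with respect to $\widehat\upsilon$.

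I do not expect a deep obstruction here: the real work is the bookkeeping of the previous two paragraphs, namely checking that the feedback (\ref{law}) produces exactly the clean cascade $\widehat\Sigma$ so that the hypotheses of Lemma \ref{lemma1} are met verbatim, and recognizing that the linear $e$-subsystem supplies a $\delta_\exists$-ISS estimate in the Euclidean metric of precisely the required shape. A secondary point needing a line of justification is that the vector fields appearing in $\widehat\Sigma$ (and in the closed-loop $\Sigma$) satisfy the regularity demanded of a control system in Definition \ref{Def_control_sys}; this follows from $\psi\in C^1$ together with the standing hypothesis that $\Sigma_\eta$ is itself a well-defined control system.
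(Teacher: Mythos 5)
Your proposal is correct and follows essentially the same route as the paper: the change of coordinates $(\eta,\zeta)\mapsto(\eta,\zeta-\psi(\eta))$ turning the closed loop into the cascade with $\dot\chi_2=-\lambda\chi_2+\widehat\upsilon$, an application of Lemma \ref{lemma1}, and then coordinate invariance of $\delta_\exists$-ISS (which you make explicit by pulling the metric back through the diffeomorphism, where the paper simply cites \cite{majid1}). The only cosmetic difference is that you derive the $\delta$-ISS estimate for the linear $\chi_2$-subsystem directly by variation of constants instead of citing \cite{angeli}.
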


\begin{proof}
Consider the following coordinate transformation:
\begin{eqnarray}
\chi=\left[{\begin{array}{c}\chi_1\\\chi_2\\\end{array}}\right]=\phi(\xi)=\left[{\begin{array}{c}\eta\\\zeta-\psi(\eta)\\\end{array}}\right], \label{trancor}
\end{eqnarray}
where $\xi=\left[\eta^T,\zeta^T\right]^T$. In the new coordinate $\chi$, we obtain the following dynamics:
\begin{equation}
    \widehat\Sigma:\left\{
                \begin{array}{ll}
                 \dot{\chi_1}=f\left(\chi_1,\psi(\chi_1)+\chi_2\right),\\
                 \dot{\chi_2}=\upsilon-\frac{\partial{\psi}}{\partial{y}}(\chi_1)f\left(\chi_1,\psi(\chi_1)+\chi_2\right).
                \end{array}
                \right.
\label{system3}
\end{equation}
The proposed control law (\ref{law}), given in the new coordinate $\chi$ by
\begin{equation}\label{law1}
\upsilon=k(\chi_1,\chi_2+\psi(\chi_1),\widehat\upsilon)=-\lambda\chi_2+\frac{\partial{\psi}}{\partial{y}}(\chi_1)f\left(\chi_1,\psi(\chi_1)+\chi_2\right)+\widehat\upsilon,
\end{equation}
transforms the control system $\widehat\Sigma$ into:
\begin{equation}
    \widetilde\Sigma:\left\{
                \begin{array}{ll}
                 \dot{\chi_1}=f(\chi_1,\psi(\chi_1)+\chi_2),\\
                 \dot{\chi_2}=-\lambda\chi_2+\widehat\upsilon.
                \end{array}
                \right.
\label{system4}
\end{equation}
Due to the choice of $\psi$, the $\chi_1$-subsystem of $\widetilde\Sigma$ is $\delta_\exists$-ISS with respect to $\chi_2$. It can be easily verified that the $\chi_2$-subsystem is input-to-state stable with respect to the input $\widehat\upsilon$. Since any ISS linear control system is also $\delta$-ISS \cite{angeli}, $\chi_2$-subsystem is also $\delta$-ISS\footnote{We recall that $\delta$-ISS property is equivalentt to $\delta_\exists$-ISS property with respect to the Euclidean metric.} with respect to $\widehat\upsilon$. Therefore, using Lemma \ref{lemma1}, we conclude that the control system $\widetilde\Sigma$ is $\delta_\exists$-ISS with respect to the input $\widehat\upsilon$. Since, $\delta_\exists$-ISS property is coordinate invariant \cite{majid1}, we conclude that the original control system $\Sigma$ in (\ref{system1}) equipped with the state feedback control law in (\ref{law}) is $\delta_\exists$-ISS with respect to the input $\widehat\upsilon$ which completes the proof. 
\end{proof}

\begin{remark}
The $\delta_\exists$-ISS property of system $\Sigma_\eta$ in (\ref{eta_sub}) can be stablished, for example, using the approaches provided in \cite{pavlov,wouw} for some relevant classes of control systems (such as piece-wise affine systems and Lur'e-type systems). 
\end{remark}

\begin{remark}
The result of Theorem \ref{theorem1} can be extended to the case that we have arbitrary number of integrators:
\begin{equation}\notag
    \Sigma:\left\{
                \begin{array}{l}
                 \dot{\eta}=f(\eta,\zeta_1),\\
                 \dot{\zeta_1}=\zeta_2,\\
                 \hspace{5mm}\vdots\\
                 \dot{\zeta_k}=\upsilon.
                \end{array}
                \right.
\end{equation}
Note that in this case, the functions $f$ and $\psi$ must be sufficiently differentiable. 
\end{remark}


Although the proposed approach in Theorem \ref{theorem1} provides a controller rendering the control system $\Sigma$ of the form (\ref{system1}) $\delta_\exists$-ISS, it does not provide a way of constructing $\delta_\exists$-ISS Lyapunov functions or contraction metrics. In the next lemmas, we show how to construct incremental Lyapunov functions and contraction metrics for the resulting closed-loop system, recursively. Note that the constructed incremental Lyapunov functions can be used as a necessary tool in the analysis in \cite{girard2,girard3,julius,kapinski}. We will show in the example section how explicit availability of an incremental Lyapunov function helps us to use the results in \cite{girard2} to construct a finite bisimilar abstraction for an incrementally input-to-state stable (non-smooth) control system.

\begin{lemma}\label{lemma2}
Consider the control system $\Sigma$ of the form (\ref{system1}). Suppose there exists a continuously differentiable function $\psi:\R^{n_\eta}\rightarrow\R^{n_\zeta}$ such that the smooth function $\widehat V:\R^{n_\eta}\times\R^{n_\eta}\rightarrow\R_0^+$ is a $\delta_\exists$-ISS Lyapunov function for the control system 
\begin{equation}\label{eta_sub1}
\Sigma_\eta: \dot\eta=f(\eta,\psi(\eta)+\widetilde\upsilon),
\end{equation}
and with respect to the control input $\widetilde\upsilon$. Assume that $\widehat{V}$ satisfies condition (iii) in Definition \ref{delta_GAS_Lya} for some $\kappa\in\R^+$ and some $\sigma\in\mathcal{K}_\infty$, satisfying $\sigma(r)\leq\widehat\kappa{r^2}$ for some $\widehat\kappa\in\R^+$ and any $r\in\R_0^+$. Then the function $\widetilde{V}:\R^{n_\eta+n_\zeta}\times\R^{n_\eta+n_\zeta}\rightarrow\R^+_0$, defined as $$\widetilde V(x,x')=\widehat{V}(y,y')+\Vert(z-\psi(y))-(z'-\psi(y'))\Vert^2,$$ where $x=\left[y^T,z^T\right]^T$ and $x'=\left[y'^T,z'^T\right]^T$, is a $\delta_\exists$-ISS Lyapunov function for $\Sigma$ as in (\ref{system1}) equipped with the state feedback control law (\ref{law}) for all $\lambda\geq\frac{\kappa+\widehat\kappa+1}{2}$.
\end{lemma}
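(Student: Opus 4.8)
The plan is to check, directly from Definition \ref{delta_GAS_Lya}, that $\widetilde V$ satisfies the sandwich bounds (i) and the decrease inequality (iii) for the closed-loop system obtained by substituting the feedback (\ref{law}) into $\Sigma$ of (\ref{system1}). I would carry out the computation in the backstepping coordinates $\chi=\phi(\xi)$ of (\ref{trancor}), $\chi_1=\eta$, $\chi_2=\zeta-\psi(\eta)$, in which the closed loop is exactly $\widetilde\Sigma$ of (\ref{system4}): $\dot\chi_1=f(\chi_1,\psi(\chi_1)+\chi_2)$, $\dot\chi_2=-\lambda\chi_2+\widehat\upsilon$. Since $\widetilde V(x,x')=\widehat V(\chi_1,\chi_1')+\Vert\chi_2-\chi_2'\Vert^2$ with $(\chi_1,\chi_2)=\phi(\xi)$, $(\chi_1',\chi_2')=\phi(\xi')$, and $\phi$ is a diffeomorphism, the chain rule makes condition (iii) for $\widetilde V$ along the closed loop in the original coordinates equivalent to the same condition for this function along $\widetilde\Sigma$.

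For (i), let $\mathbf{d}_\eta,\underline\alpha_\eta,\overline\alpha_\eta$ witness that $\widehat V$ is a $\delta_\exists$-ISS Lyapunov function for $\Sigma_\eta$, and take $\mathbf{d}(x,x')=\mathbf{d}_\eta(y,y')+\Vert(z-\psi(y))-(z'-\psi(y'))\Vert$. Because $x\mapsto(y,z-\psi(y))$ is a bijection, $\mathbf{d}$ inherits symmetry and the triangle inequality termwise, and $\mathbf{d}(x,x')=0$ forces $y=y'$ and then $z=z'$, so $\mathbf{d}$ is a metric. With $a=\mathbf{d}_\eta(y,y')$, $b=\Vert\chi_2-\chi_2'\Vert$ (so $\mathbf{d}(x,x')=a+b$ and $\underline\alpha_\eta(a)+b^2\le\widetilde V\le\overline\alpha_\eta(a)+b^2$), condition (i) holds with $\overline\alpha(r)=\overline\alpha_\eta(r)+r^2$ (since $a,b\le a+b$) and $\underline\alpha(r)=\theta(r/2)$, $\theta(r)=\min\{\underline\alpha_\eta(r),r^2\}\in\mathcal{K}_\infty$ (since $a+b\le 2\max\{a,b\}$ and $\underline\alpha_\eta(a)+b^2\ge\theta(\max\{a,b\})$).

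For (iii), the left-hand side splits into a $\chi_1$-block and a $\chi_2$-block. The $\chi_1$-block is $\frac{\partial\widehat V}{\partial\chi_1}f(\chi_1,\psi(\chi_1)+\chi_2)+\frac{\partial\widehat V}{\partial\chi_1'}f(\chi_1',\psi(\chi_1')+\chi_2')$, which is precisely condition (iii) for $\widehat V$ on $\Sigma_\eta$ evaluated at inputs $\widetilde\upsilon=\chi_2$, $\widetilde\upsilon'=\chi_2'$, hence $\le -\kappa\widehat V(\chi_1,\chi_1')+\sigma(\Vert\chi_2-\chi_2'\Vert)\le -\kappa\widehat V(\chi_1,\chi_1')+\widehat\kappa\Vert\chi_2-\chi_2'\Vert^2$. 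The $\chi_2$-block is $2(\chi_2-\chi_2')^T(-\lambda(\chi_2-\chi_2')+(\widehat\upsilon-\widehat\upsilon'))$, which by Young's inequality is $\le -(2\lambda-1)\Vert\chi_2-\chi_2'\Vert^2+\Vert\widehat\upsilon-\widehat\upsilon'\Vert^2$. Adding, and using $\lambda\ge\frac{\kappa+\widehat\kappa+1}{2}$ (equivalently $2\lambda-1-\widehat\kappa\ge\kappa$), the sum is $\le -\kappa\widehat V(\chi_1,\chi_1')-\kappa\Vert\chi_2-\chi_2'\Vert^2+\Vert\widehat\upsilon-\widehat\upsilon'\Vert^2=-\kappa\widetilde V(x,x')+\Vert\widehat\upsilon-\widehat\upsilon'\Vert^2$, i.e.\ (iii) holds with rate $\widetilde\kappa=\kappa$ and $\widetilde\sigma(r)=r^2\in\mathcal{K}_\infty$. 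Together with (i), $\widetilde V$ is a $\delta_\exists$-ISS Lyapunov function for the closed loop.

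The two displayed estimates are routine; what needs care is, first, that the feedback (\ref{law}) is engineered so that the $\chi_2$-dynamics reduces to the linear system $\dot\chi_2=-\lambda\chi_2+\widehat\upsilon$, so that a single application of Young's inequality controls the coupling; and, second, the constant bookkeeping whereby one gain threshold $\lambda\ge\frac{\kappa+\widehat\kappa+1}{2}$ simultaneously absorbs the $\widehat\kappa$ coming from $\sigma(r)\le\widehat\kappa r^2$, the $1$ coming from Young's inequality, and still leaves a margin $\kappa$ for the overall decay rate. I would also note that $\widetilde V$ is as regular as $\widehat V$ and $\psi$ allow, so the smoothness convention for incremental Lyapunov functions is met whenever $\psi$ is chosen smooth.
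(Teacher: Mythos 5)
Your proof is correct and follows essentially the same route as the paper: pass to the backstepping coordinates (\ref{trancor}), show that $\widehat V(\chi_1,\chi_1')+\Vert\chi_2-\chi_2'\Vert^2$ is a $\delta_\exists$-ISS Lyapunov function for $\widetilde\Sigma$ in (\ref{system4}) with the metric $\mathbf{d}_\eta(y,y')+\Vert\chi_2-\chi_2'\Vert$ and the same Cauchy--Schwarz/Young bookkeeping yielding the threshold $\lambda\geq\frac{\kappa+\widehat\kappa+1}{2}$ and rate $\kappa$ with gain $\Vert\widehat\upsilon-\widehat\upsilon'\Vert^2$, then pull back through $\phi$. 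The only differences are cosmetic refinements: you justify the transfer by the chain rule rather than citing coordinate invariance of $\delta_\exists$-ISS Lyapunov functions, and your explicit class-$\mathcal{K}_\infty$ envelopes $\overline\alpha(r)=\overline\alpha_\eta(r)+r^2$ and $\underline\alpha(r)=\min\{\underline\alpha_\eta(r/2),(r/2)^2\}$ make condition (i) precise where the paper's $\underline\mu,\overline\mu$ are stated somewhat loosely.
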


\begin{proof}
As explained in the proof of Theorem \ref{theorem1}, using the proposed state feedback control law (\ref{law}) and the coordinate transformation $\phi$ in (\ref{trancor}), the control system $\Sigma$ of the form (\ref{system1}) is transformed to the control system $\widetilde\Sigma$ in (\ref{system4}). Now we show that $$V(\widehat{x},\widehat{x}')=\widehat{V}(\widehat{x}_1,\widehat{x}'_1)+(\widehat{x}_2-\widehat{x}'_2)^T(\widehat{x}_2-\widehat{x}'_2),$$ is a $\delta_\exists$-ISS Lyapunov function for $\widetilde\Sigma$, where $\widehat{x}=\left[\widehat{x}_1^T,\widehat{x}_2^T\right]^T$ and $\widehat{x}'=\left[\widehat{x}'^T_1,\widehat{x}'^T_2\right]^T$ are the states of $\widetilde\Sigma$ and $\widehat{x}_1,\widehat{x}'_1$, and $\widehat{x}_2,\widehat{x}'_2$ are the states of $\chi_1$, $\chi_2$-subsystems, respectively. Since $\widehat{V}$ is a $\delta_\exists$-ISS Lyapunov function for $\chi_1$-subsystem when $\chi_2$ is the input, it satisfies condition (i) in Definition \ref{delta_GAS_Lya} using a metric $\mathbf{d}$ as follows:$$\underline\alpha(\mathbf{d}(\widehat{x}_1,\widehat{x}'_1))\leq \widehat V(\widehat{x}_1,\widehat{x}'_1)\leq\overline\alpha(\mathbf{d}(\widehat{x}_1,\widehat{x}'_1)),$$ for some $\underline\alpha,\overline\alpha\in\mathcal{K}_\infty$. Now we define a new metric $\widehat{\mathbf{d}}:\R^{n_\eta+n_\zeta}\times\R^{n_\eta+n_\zeta}\rightarrow\R^+_0$ by 
\begin{equation}\label{new_metric}
\widehat{\mathbf{d}}(\widehat{x},\widehat{x}')=\mathbf{d}(\widehat{x}_1,\widehat{x}'_1)+\Vert{\widehat{x}_2}-\widehat{x}'_2\Vert. 
\end{equation}
It can be readily checked that $\widehat{\mathbf{d}}$ satisfies all three conditions of a metric. Using metric $\widehat{\mathbf{d}}$, function $V$ satisfies condition (i) in Definition \ref{delta_GAS_Lya} as follows:$$\underline\mu\left(\widehat{\mathbf{d}}(\widehat{x},\widehat{x}')\right)\leq V(\widehat{x},\widehat{x}')\leq\overline\mu\left(\widehat{\mathbf{d}}(\widehat{x},\widehat{x}')\right),$$ where $\underline\mu,\overline\mu\in\mathcal{K}_\infty$, $\underline{\mu}\left(\widehat{\mathbf{d}}(\widehat{x},\widehat{x}')\right)=\underline\alpha(\mathbf{d}(\widehat{x}_1,\widehat{x}'_1))+\Vert \widehat{x}_2-\widehat{x}'_2\Vert^2$, and $\overline{\mu}\left(\widehat{\mathbf{d}}(\widehat{x},\widehat{x}')\right)=\overline\alpha(\mathbf{d}(\widehat{x}_1,\widehat{x}'_1))+\Vert \widehat{x}_2-\widehat{x}'_2\Vert^2$. Now we show that $V$ satisfies condition (iii) in Definition \ref{delta_GAS_Lya} for $\widetilde\Sigma$. Since $\widehat{V}$ is a $\delta_\exists$-ISS Lyapunov function for $\chi_1$-subsystem when $\chi_2$ is the input, $\lambda\geq\frac{\kappa+\widehat\kappa+1}{2}$, $\sigma(r)\leq\widehat\kappa{r^2}$, and using the Cauchy Schwarz inequality, we have:
\begin{align}\notag
\frac{\partial{V}}{\partial{\widehat{x}}}\left[f(\widehat{x}_1,\psi(\widehat{x}_1)+\widehat{x}_2)^T,-\lambda\widehat{x}_2^T+\widehat{u}^T\right]^T+\frac{\partial{V}}{\partial{\widehat{x}'}}\left[f(\widehat{x}'_1,\psi(\widehat{x}'_1)+\widehat{x}'_2)^T,-\lambda\widehat{x}'^T_2+\widehat{u}'^T\right]^T&\leq\\\notag
\frac{\partial{\widehat{V}}}{\partial{\widehat{x}_1}}f(\widehat{x}_1,\psi(\widehat{x}_1)+\widehat{x}_2)+\frac{\partial{\widehat{V}}}{\partial{\widehat{x}'_1}}f(\widehat{x}'_1,\psi(\widehat{x}'_1)+\widehat{x}'_2)+2(\widehat{x}_2-\widehat{x}'_2)^T\left(-\lambda\widehat{x}_2+\widehat{u}\right)-2(\widehat{x}_2-\widehat{x}'_2)^T\left(-\lambda\widehat{x}'_2+\widehat{u}'\right)&\leq\\\notag
-\kappa\widehat{V}(\widehat{x}_1,\widehat{x}'_1)+\sigma(\Vert\widehat{x}_2-\widehat{x}'_2\Vert)-2\lambda\Vert\widehat{x}_2-\widehat{x}'_2\Vert^2+2(\widehat{x}_2-\widehat{x}'_2)^T(\widehat{u}-\widehat{u}')&\leq\\\notag
-\kappa\widehat{V}(\widehat{x}_1,\widehat{x}'_1)+\widehat\kappa\Vert\widehat{x}_2-\widehat{x}'_2\Vert^2-2\lambda\Vert\widehat{x}_2-\widehat{x}'_2\Vert^2+2\Vert \widehat{x}_2-\widehat{x}'_2\Vert\Vert\widehat{u}-\widehat{u}'\Vert&\leq\\\notag
-\kappa\widehat{V}(\widehat{x}_1,\widehat{x}'_1)+\widehat\kappa\Vert\widehat{x}_2-\widehat{x}'_2\Vert^2-2\lambda\Vert\widehat{x}_2-\widehat{x}'_2\Vert^2+\Vert \widehat{x}_2-\widehat{x}'_2\Vert^2+\Vert\widehat{u}-\widehat{u}'\Vert^2&\leq\\\notag
-\kappa V(\widehat{x},\widehat{x}')+\Vert\widehat{u}-\widehat{u}'\Vert^2&.
\end{align}  
The latter inequality implies that $V$ is a $\delta_\exists$-ISS Lyapunov function for $\widetilde\Sigma$. Since $\delta_\exists$-ISS Lyapunov functions are coordinate-invariant \cite{majid4}, we conclude that the function $\widetilde{V}:\R^{n_\eta+n_\zeta}\times\R^{n_\eta+n_\zeta}\rightarrow\R^+_0$, defined by $$\widetilde V(x,x')=V(\phi(x),\phi(x'))=\widehat{V}(y,y')+\Vert(z-\psi(y))-(z'-\psi(y'))\Vert^2,$$ is a $\delta_\exists$-ISS Lyapunov function for $\Sigma$, as in (\ref{system1}) equipped with the state feedback control law in (\ref{law}).
\end{proof}

\begin{remark}
One can use the LMI based results in \cite{pavlov,pavlov2,wouw} to find a quadratic $\delta_\exists$-ISS Lyapunov function for system $\Sigma_\eta$ in (\ref{eta_sub1}). 
\end{remark}

\begin{remark}
It can be verified that the backstepping design approach for strict-feedback form control systems, proposed in \cite{majid4}, is a special case of the results in Lemma \ref{lemma2}. The results in \cite{majid4} can be easily obtained by recursively applying the results proposed in Lemma \ref{lemma2}. Moreover, one can construct a metric $\mathbf{d}$ for a strict-feedback form control system, satisfying (\ref{dISS}), by recursively applying the construction in (\ref{new_metric}) and applying the change of coordinate in (\ref{trancor}). 
\end{remark}

The next lemma shows how to construct contraction metrics for the closed-loop system resulting from the backstepping controller synthesis technique in Theorem \ref{theorem1}.

\begin{lemma}\label{lemma3}
Consider the control system $\Sigma$ of the form (\ref{system1}) and assume function $f$ is smooth. Suppose there exists a continuously differentiable function $\psi:\R^{n_\eta}\rightarrow\R^{n_\zeta}$ such that the metric $\widehat G:\R^{n_\eta}\rightarrow\R^{n_\eta\times n_\eta}$ is a contraction metric, with respect to states and inputs, for the control system 
\begin{equation}\notag
\Sigma_\eta: \dot\eta=f(\eta,\psi(\eta)+\widetilde\upsilon),
\end{equation}
satisfying the condition (\ref{contraction3ISS}) for some $\widehat\lambda\in\mathbb{R}^+$ and $\alpha\in\mathbb{R}_0^+$. Then $$\widetilde{G}(x)=\left[{\begin{array}{cc}\widehat G(y)+\left(\frac{\partial\psi}{\partial{y}}\right)^T\frac{\partial\psi}{\partial{y}}&-\left(\frac{\partial\psi}{\partial{y}}\right)^T\\-\frac{\partial\psi}{\partial{y}}&I_{n_\zeta}\\\end{array}}\right],$$ where $x=\left[y^T,z^T\right]^T$, is a contraction metric, with respect to states and inputs, for $\Sigma$ as in (\ref{system1}) equipped with the state feedback control law in (\ref{law}) for all $\lambda>\frac{\alpha^2}{8\widehat\lambda}$.
\end{lemma}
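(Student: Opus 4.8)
The plan is to work entirely in the transformed coordinates $\chi=\phi(\xi)$ of (\ref{trancor}), in which the closed loop becomes the triangular system $\widetilde\Sigma$ of (\ref{system4}). First I would note that the Jacobian $\frac{\partial\phi}{\partial x}$ is block lower-triangular, with diagonal blocks $I_{n_\eta}$ and $I_{n_\zeta}$ and lower-left block $-\frac{\partial\psi}{\partial y}$, so a direct multiplication shows $\widetilde G=\phi^{*}G$, where $G(\chi)=\left[\begin{array}{cc}\widehat G(\chi_1)&0\\0&I_{n_\zeta}\end{array}\right]$; in particular $\langle Z,Z\rangle_{G}=\langle X,X\rangle_{\widehat G}+\Vert W\Vert^{2}$ for $Z=\left[X^{T},W^{T}\right]^{T}$, so $G$ (hence $\widetilde G$) is a genuine Riemannian metric. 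Since the contraction property and its rate are invariant under a global change of coordinates (equivalently, the matrix $F$ of Definition \ref{contraction1} transforms covariantly under $\phi$), it suffices to prove that $G$ is a contraction metric, with respect to states and inputs, for $\widetilde\Sigma$.

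Next I would compute the matrix $\widetilde F$ attached to $\widetilde\Sigma$ and $G$. With $\widetilde f(\chi,\widehat\upsilon)=\left[f(\chi_1,\psi(\chi_1)+\chi_2)^{T},\,(-\lambda\chi_2+\widehat\upsilon)^{T}\right]^{T}$, the Jacobian $\frac{\partial\widetilde f}{\partial\chi}$ is block upper-triangular with blocks $A:=\frac{\partial}{\partial\chi_1}f(\chi_1,\psi(\chi_1)+\chi_2)$, $B:=\frac{\partial f}{\partial\zeta}$, $0$, and $-\lambda I_{n_\zeta}$, while $\frac{\partial\widetilde f}{\partial\widehat\upsilon}=\left[0,\,I_{n_\zeta}\right]^{T}$. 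Because $G$ is block-diagonal and depends on $\chi_1$ only, the matrix $\widetilde F=\left(\frac{\partial\widetilde f}{\partial\chi}\right)^{T}G+G\frac{\partial\widetilde f}{\partial\chi}+\frac{\partial G}{\partial\chi}\widetilde f$ has top-left block equal to $\widehat F$, the matrix of Definition \ref{contraction1} for $\Sigma_\eta$ with $\chi_2$ in the role of its input, bottom-right block $-2\lambda I_{n_\zeta}$, and off-diagonal blocks $\widehat G B$ and $B^{T}\widehat G$. Hence, for a tangent vector $Z=\left[X^{T},W^{T}\right]^{T}$ and an input perturbation $Y\in\R^{n_\zeta}$,
\[
\langle Z,Z\rangle_{\widetilde F}+2\big\langle\tfrac{\partial\widetilde f}{\partial\widehat\upsilon}Y,Z\big\rangle_{G}=\langle X,X\rangle_{\widehat F}+2\big\langle\tfrac{\partial f}{\partial\zeta}W,X\big\rangle_{\widehat G}-2\lambda\Vert W\Vert^{2}+2Y^{T}W ,
\]
and invoking (\ref{contraction3ISS}) for $\Sigma_\eta$ with $W$ in the role of its input perturbation bounds the first two terms on the right by $-\widehat\lambda\langle X,X\rangle_{\widehat G}+\alpha\langle X,X\rangle_{\widehat G}^{1/2}\Vert W\Vert$.

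It then remains to absorb the cross terms. By Young's inequality $\alpha\langle X,X\rangle_{\widehat G}^{1/2}\Vert W\Vert\le\epsilon\langle X,X\rangle_{\widehat G}+\tfrac{\alpha^{2}}{4\epsilon}\Vert W\Vert^{2}$ for any $\epsilon>0$, and $2Y^{T}W\le2\Vert W\Vert\,\Vert Y\Vert\le2\langle Z,Z\rangle_{G}^{1/2}\langle Y,Y\rangle_{I_{n_\zeta}}^{1/2}$ since $\Vert W\Vert\le\langle Z,Z\rangle_{G}^{1/2}$. The key observation is that the interval $\left(\tfrac{\alpha^{2}}{8\lambda},\widehat\lambda\right)$ is non-empty precisely when $\lambda>\tfrac{\alpha^{2}}{8\widehat\lambda}$; choosing $\epsilon$ in this interval makes both $\widehat\lambda-\epsilon$ and $2\lambda-\tfrac{\alpha^{2}}{4\epsilon}$ strictly positive, so the right-hand side above is at most $-\widetilde\lambda\langle Z,Z\rangle_{G}+2\langle Z,Z\rangle_{G}^{1/2}\langle Y,Y\rangle_{I_{n_\zeta}}^{1/2}$ with $\widetilde\lambda:=\min\{\widehat\lambda-\epsilon,\,2\lambda-\tfrac{\alpha^{2}}{4\epsilon}\}>0$. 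This is exactly (\ref{contraction3ISS}) for $\widetilde\Sigma$ with contraction rate $\widetilde\lambda$ and constant $\widetilde\alpha=2$; pulling back along $\phi$ then gives the claim for $\Sigma$ equipped with the feedback (\ref{law}).

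The main obstacle I anticipate is the bookkeeping in the middle step: verifying that the top-left block of $\widetilde F$ is literally $\widehat F$ (so the hypothesis on $\Sigma_\eta$ applies verbatim with $\chi_2$ as its input) and that the $\frac{\partial G}{\partial\chi}\widetilde f$ contribution combines with the off-diagonal blocks exactly as claimed, together with pinning down $\epsilon$ so that $\widetilde\lambda>0$ holds precisely under $\lambda>\tfrac{\alpha^{2}}{8\widehat\lambda}$. One should also either justify the covariance of the contraction condition under $\phi$ or, alternatively, redo the computation directly in the $x$-coordinates — which is more laborious because of the term $\frac{\partial\widetilde G}{\partial x}\,\widetilde f$ for the full closed-loop vector field.
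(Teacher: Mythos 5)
Your proposal is correct and follows essentially the same route as the paper's proof: pass to the transformed system $\widetilde\Sigma$ of (\ref{system4}), verify that the block-diagonal metric $\mathrm{diag}(\widehat G,I_{n_\zeta})$ satisfies (\ref{contraction3ISS}) for $\widetilde\Sigma$ by combining the contraction hypothesis on $\Sigma_\eta$ (with $\chi_2$ as input) with Cauchy--Schwarz/Young estimates under $\lambda>\frac{\alpha^2}{8\widehat\lambda}$, and then pull back along $\phi$ using coordinate invariance of contraction metrics (which the paper justifies by citing \cite{majid1}, and which you correctly flag as the step needing justification). Your explicit choice of $\epsilon\in\left(\frac{\alpha^2}{8\lambda},\widehat\lambda\right)$ and the resulting $\widetilde\lambda$ just make precise the absorption step that the paper carries out implicitly, and your identification of $\widetilde G=\phi^{*}G$ matches the paper's construction.
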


\begin{proof}
As explained in the proof of Theorem \ref{theorem1}, using the proposed control law (\ref{law}) and the coordinate transformation $\phi$ in (\ref{trancor}), the control system $\Sigma$ of the form (\ref{system1}) is transformed to the control system $\widetilde\Sigma$ in (\ref{system4}). Now we show that the metric $$G(\widehat{x})=\left[{\begin{array}{cc}\widehat G(\widehat{x}_1)&0_{n_\eta\times n_\zeta}\\0_{n_\zeta\times n_\eta}&I_{n_\zeta}\\\end{array}}\right],$$ is a contraction metric, with respect to states and inputs, for $\widetilde\Sigma$, where $\widehat{x}=\left[\widehat{x}_1^T,\widehat{x}_2^T\right]^T$ is the state of $\widetilde\Sigma$, and $\widehat{x}_1$, and $\widehat{x}_2$ are states of $\chi_1,\chi_2$-subsystems, respectively. It can be easily seen that $G$ is positive definite because $\widehat{G}$ is positive definite since it is a contraction metric for $\Sigma_\eta$. Now we show that $G$ satisfies the condition (\ref{contraction3ISS}) for the control system $\widetilde\Sigma$. Since $\widehat{G}$ is a contraction metric, with respect to states and inputs, for $\chi_1$-subsystem when $\chi_2$ is the input, we have:
\begin{eqnarray}
\nonumber
&\widehat{X}_1^T\left(\left(\frac{\partial{f}}{\partial{\widehat{x}_1}}\right)^T\widehat G(\widehat{x}_1)+\widehat{G}(\widehat{x}_1)\frac{\partial{f}}{\partial{\widehat{x}_1}}+\frac{\partial{\widehat{G}}}{\partial{\widehat{x}_1}}f(\widehat{x}_1,\psi(\widehat{x}_1)+\widehat{x}_2)\right)\widehat{X}_1+2\widehat{X}_2^T\left(\frac{\partial{f}}{\partial{\widehat{x}_2}}\right)^T\widehat{G}(\widehat{x}_1)\widehat{X}_1\\
&\leq-\widehat\lambda\widehat{X}_1^T\widehat{G}(\widehat{x}_1)\widehat{X}_1+\alpha\left(\widehat{X}_1^T\widehat{G}(\widehat{x}_1)\widehat{X}_1\right)^\frac{1}{2}\left(\widehat{X}_2^T\widehat{X}_2\right)^\frac{1}{2}, \label{delta_ISS_cont}
\end{eqnarray}
for any $\widehat{X}_1,\widehat{x}_1\in\R^{n_\eta}$, $\widehat{X}_2,\widehat{x}_2\in\R^{n_\zeta}$, some $\widehat\lambda\in\R^+$, and some $\alpha\in\R_0^+$. By choosing $\lambda>\frac{\alpha^2}{8\widehat\lambda}$, using (\ref{delta_ISS_cont}), and the Cauchy Schwarz inequality we obtain:
\begin{align}
\nonumber
\left[\widehat{X}_1^T~\widehat{X}_2^T\right]\left(\left(\frac{\partial\left[f,-\lambda\widehat{x}_2+\widehat{u}\right]^T}{\partial{\widehat{x}}}\right)^TG(\widehat{x})+G(\widehat{x})\frac{\partial\left[f,-\lambda\widehat{x}_2+\widehat{u}\right]^T}{\partial{\widehat{x}}}+\frac{\partial{G}}{\partial{\widehat{x}}}\left[{\begin{array}{c}f(\widehat{x}_1,\psi(\widehat{x}_1)+\widehat{x}_2)\\-\lambda\widehat{x}_2+\widehat{u}\\\end{array}}\right]\right)\left[{\begin{array}{c}\widehat{X}_1\\\widehat{X}_2\\\end{array}}\right]+&\\\notag2Y^T\left[{\begin{array}{c}0_{n_\eta\times n_\zeta}\\I_{n_\zeta}\\\end{array}}\right]^TG(\widehat{x})\left[{\begin{array}{c}\widehat{X}_1\\\widehat{X}_2\\\end{array}}\right]&=\\\notag\left[\widehat{X}_1^T~\widehat{X}_2^T\right]\left[{\begin{array}{cc}\left(\frac{\partial{f}}{\partial{\widehat{x}_1}}\right)^T\widehat{G}(\widehat{x}_1)+\widehat{G}(\widehat{x}_1)\frac{\partial{f}}{\partial{\widehat{x}_1}}+\frac{\partial{\widehat{G}}}{\partial{\widehat{x}_1}}f&\widehat{G}(\widehat{x}_1)\frac{\partial{f}}{\partial{\widehat{x}_2}}\\\left(\frac{\partial{f}}{\partial{\widehat{x}_2}}\right)^T\widehat{G}(\widehat{x}_1)&-2\lambda I_{n_\zeta}\\\end{array}}\right]\left[{\begin{array}{c}\widehat{X}_1\\\widehat{X}_2\\\end{array}}\right]+2Y^T\widehat{X}_2&\leq\\\notag-\widehat\lambda\left\langle \widehat{X}_1,\widehat{X}_1\right\rangle_{\widehat{G}}+\alpha\left\langle\widehat{X}_1,\widehat{X}_1\right\rangle_{\widehat{G}}^{\frac{1}{2}}\left\langle\widehat{X}_2,\widehat{X}_2\right\rangle_{I_{n_\zeta}}^{\frac{1}{2}}-2\lambda\widehat{X}_2^T\widehat{X}_2+2Y^T\widehat{X}_2&\leq\\\notag
-\widetilde\lambda\left\langle\widehat{X}_1,\widehat{X}_1\right\rangle_{\widehat{G}}-\widetilde\lambda\widehat{X}_2^T\widehat{X}_2+2\sqrt{Y^TY}\sqrt{\widehat{X}_2^T\widehat{X}_2+\left\langle\widehat{X}_1,\widehat{X}_1\right\rangle_{\widehat{G}}}&\leq\\\notag
-\widetilde\lambda\left\langle\widehat{X},\widehat{X}\right\rangle_G+2\left\langle\widehat{X},\widehat{X}\right\rangle_{G}^{\frac{1}{2}}\left\langle Y,Y\right\rangle_{I_{n_\zeta}}^{\frac{1}{2}},
\end{align} 
for any $\widehat{X}=\left[\widehat{X}_1^T~\widehat{X}_2^T\right]^T\in\R^{n_\eta+n_\zeta}$, any $\widehat{x}=\left[\widehat{x}_1^T~\widehat{x}_2^T\right]^T\in\R^{n_\eta+n_\zeta}$, any $Y\in\R^{n_\zeta}$, and some $\widetilde\lambda\in\R^+$. Hence, $G$ is a contraction metric, with respect to states and inputs, for $\widetilde\Sigma$. Since a contraction metric, with respect to states and inputs, is coordinate invariant \cite{majid1}, we conclude that $\widetilde{G}=\phi^*G$ is a contraction metric, with respect to states and inputs, for $\Sigma$ as in (\ref{system1}) equipped with the state feedback control law in (\ref{law}). This completes the proof.
\end{proof}

\begin{remark}
It can be verified that the backstepping design approach for strict-feedback form control systems, proposed in \cite{majid1}, is a special case of the results in Lemma \ref{lemma3}. The results in \cite{majid1} can be easily obtained by recursively applying the results proposed in Lemma \ref{lemma3}. 
\end{remark}

\section{Example}
We refer the interested readers to the provided example in \cite{majid5}, illustrating the results in this paper on a four-dimensional non-smooth control system. Here, we study another non-smooth control system and use the results in this paper to explicitly construct a $\delta_\exists$-ISS Lyapunov function, which, in turn, is employed to construct a finite equivalent abstraction using the results in \cite{girard2}. Consider the following non-smooth control system:
\begin{equation}
    \Sigma:\left\{
                \begin{array}{l}
                 \dot{\eta_1}=\textsf{sat}(\eta_1)+\eta_1+5\zeta_1,\\
                 \dot{\zeta_1}=\zeta_1^2+\eta_1^2+\upsilon,
                \end{array}
                \right.
\label{example}
\end{equation}
where $\textsf{sat}:\R\rightarrow\R$ is the saturation function, defined by: 
\begin{equation}\notag
    \textsf{sat}(x)=\left\{
                \begin{array}{cl}
                 -1&\text{if}~x<-1,\\
                 x&\text{if}~\vert{x}\vert\leq1,\\
                 1&\text{if}~x>1.
                \end{array}
                \right.
\end{equation}

It can be readily verified that $\Sigma$ is unstable at $(0,0)$, implying that $\Sigma$ is not $\delta_\exists$-ISS. The results in \cite{jouffroy1,sharma,sharma1,majid1,majid4} can not be applied to design controllers that render the system $\Sigma$ $\delta_\exists$-ISS because the right hand side of $\Sigma$ in (\ref{example}) is not continuously differentiable. The results in \cite{pavlov} also can not be applied because these results would result in a closed-loop system which is input-to-state convergent rather than $\delta_\exists$-ISS. Note that here we require $\delta_\exists$-ISS and a $\delta_\exists$-ISS Lyapunov function in order to construct a finite equivalent abstraction using the results in \cite{girard2}. By introducing the feedback transformation $\widehat\upsilon=\zeta_1^2+\eta_1^2+\upsilon$, the control system $\Sigma$ is transformed into the following form:
\begin{equation}\notag
    \widehat\Sigma:\left\{
                \begin{array}{l}
                 \dot{\eta_1}=\textsf{sat}(\eta_1)+\eta_1+5\zeta_1,\\
                 \dot{\zeta_1}=\widehat\upsilon.
                \end{array}
                \right.
\end{equation}
Now by choosing $\psi(\eta_1)=-\eta_1$ and substituting $\psi(\eta_1)+\widetilde\upsilon$ instead of $\zeta_1$, we obtain the following $\eta$-subsystem:
\begin{equation}\notag
    \widehat\Sigma_\eta:\left\{
                \begin{array}{l}
                 \dot{\eta_1}=\textsf{sat}(\eta_1)+\eta_1+5(\psi(\eta_1)+\widetilde\upsilon)=\textsf{sat}(\eta_1)-4\eta_1+5\widetilde\upsilon.
                \end{array}
                \right.
\end{equation}
It remains to show that $\widehat\Sigma_\eta$ is $\delta_\exists$-ISS with respect to $\widetilde\upsilon$. By choosing the function $V_1(y_1,y_1')=(y_1-y_1')^2$, where $y_1$ and $y_1'$ are states of $\widehat\Sigma_\eta$, and using the Cauchy Schwarz inequality, we have that:
\begin{align}\notag
\frac{\partial{V_1}}{\partial{y_1}}\left(\textsf{sat}(y_1)-4y_1+5\widetilde{u}\right)+\frac{\partial{V_1}}{\partial{y'_1}}\left(\textsf{sat}\left(y'_1\right)-4y'_1+5\widetilde{u}'\right)&\leq\\\notag
-8(y_1-y_1')^2+2\vert{y_1}-y_1'\vert\vert\textsf{sat}(y_1)-\textsf{sat}(y'_1)\vert+10(y_1-y_1')(\widetilde{u}-\widetilde{u}')&\leq\\\notag-8(y_1-y_1')^2+2(y_1-y_1')^2+10(y_1-y_1')(\widetilde{u}-\widetilde{u}')&\leq\\\notag
-5(y_1-y_1')^2+25(\widetilde{u}-\widetilde{u}')^2,
\end{align}
showing that $V_1$ is a $\delta_\exists$-ISS Lyapunov function for $\widehat\Sigma_\eta$ and, hence, $\widehat\Sigma_\eta$ is $\delta_\exists$-ISS with respect to $\widetilde\upsilon$. By using the results in Theorem \ref{theorem1} for the control system $\widehat\Sigma$, we conclude that the state feedback control law:
\begin{align}\notag
\widehat\upsilon=k(\eta_1,\zeta_1,\bar\upsilon)=&-\lambda(\zeta_1-\psi(\eta_1))+\frac{\partial\psi}{\partial{y_1}}\dot\eta_1+\bar\upsilon\\\notag
=&-\lambda\left(\zeta_1+\eta_1\right)-\left(\textsf{sat}(\eta_1)+\eta_1+5\zeta_1\right)+\bar\upsilon,
\end{align}
makes the control system $\widehat\Sigma$ $\delta_\exists$-ISS with respect to input $\bar\upsilon$, for any $\lambda\in\R^+$. Therefore, the state feedback control law 
\begin{equation}\label{law5}
\upsilon=\widehat{k}(\eta_1,\zeta_1,\bar\upsilon)=k(\eta_1,\zeta_1,\bar\upsilon)-\eta_1^2-\zeta_1^2,
\end{equation}
makes the control system $\Sigma$ $\delta_\exists$-ISS with respect to input $\bar\upsilon$. 

Using Lemma \ref{lemma2}, we conclude that the function $V:\R^2\times\R^2\rightarrow\R_0^+$, defined by:
\begin{align}\notag
V(x,x')&=V_1(y_1,y_1')+\vert\left(z_1-\psi(y_1)\right)-\left(z_1'-\psi(y_1')\right)\vert^2\\\notag&=(y_1-y_1')^2+\left((z_1+y_1)-(z_1'+y_1')\right)^2\\\notag&=\left(x-x'\right)^TP\left(x-x'\right)=\left(x-x'\right)^T\left[ {\begin{array}{cc}
2&1\\
1&1\\
 \end{array} } \right]\left(x-x'\right),
\end{align}
where $x=\left[y_1,z_1\right]^T$ is the state of $\Sigma$, is a $\delta_\exists$-ISS Lyapunov function for the control system $\Sigma$ equipped with the state feedback control law $\widehat{k}$ in (\ref{law5}) with $\lambda>\frac{25+5+1}{2}$. Here, we choose $\lambda=16$.

It can be readily verified that the function $\widehat{V}(x,x')=\sqrt{V(x,x')}$ is also a $\delta_\exists$-ISS Lyapunov function for the control system $\Sigma$ equipped with the state feedback control law $\widehat{k}$ in (\ref{law5}) with $\lambda>\frac{25+5+1}{2}$, satisfying:
\begin{itemize}
\item[(i)] for any $x,x'\in\R^2$,\\
$\sqrt{\lambda_{\min}(P)}\left\Vert x-x'\right\Vert\leq \widehat{V}(x,x')\leq\sqrt{\lambda_{\max}(P)}\Vert x-x'\Vert;$
\item[(ii)] for any $x,x'\in\R^2$ and for any $\bar{u},\bar{u}'\in\overline{\mathsf{U}}\subseteq{\R}$,\\
$\frac{\partial{\widehat{V}}}{\partial{x}}f\left(x,\widehat{k}(x,\bar{u})\right)+\frac{\partial{\widehat{V}}}{\partial{x'}}f\left(x',\widehat{k}(x',\bar{u}')\right)\leq-2.5\widehat{V}(x,x')+\frac{\vert{\bar{u}}-\bar{u}'\vert}{\lambda_{\min}(P)};$
\item[(iii)] for any $x,y,z\in\R^2$\\
$\left\vert\widehat{V}(x,y)-\widehat{V}(x,z)\right\vert\leq\frac{\lambda_{\max}(P)}{\sqrt{\lambda_{\min}(P)}}\Vert y-z\Vert;$
\end{itemize}
where $\lambda_{\min}(P)$, and $\lambda_{\max}(P)$ stand for minimum and maximum eigenvalues of $P$. Note that the property (iii) is a consequence of Proposition 10.5 in \cite{paulo}.

Finite abstractions are simpler descriptions of control systems, typically with finitely many states, in which each state of the abstraction represents a collection or aggregate
of states in the control system. Similar finite abstractions are used in software and hardware modeling, which enables the composition of such abstractions with the finite abstraction of the control system. The result of this composition are finite abstractions capturing the behavior of the control system interacting with the digital computational devices. Once such abstractions are available, the methodologies and tools developed in computer science for verification and controller synthesis purposes can be easily employed to control systems, via these abstractions. However, for constructing a bisimilar finite abstraction, using the results in \cite{girard2} which does not impose any restriction on the sampling time, the control system is required to be incrementally stable and to exhibit an incremental Lyapunov function. The incremental stability property bounds the error propagations coming from discretization of the state space and input set in the process of constructing the finite bisimilar abstractions. We refer the interested readers to \cite{paulo} for more detailed information about the finite abstractions and their great advantages in controller synthesis problems.

Now, we construct a finite abstraction $S(\Sigma)$ for the control system $\Sigma$, equipped with the control input $\upsilon$ in (\ref{law5}), using the results in \cite{girard2}. We assume that $\bar\upsilon(t)\in\overline{\mathsf{U}}=[-10,~10]$, for any $t\in\R_0^+$, and $\bar\upsilon$ belongs to set $\overline{\mathcal{U}}$ that contains piecewise constant curves of duration $\tau=0.1$ second ($\tau$ is the sampling time) taking values in $\left[\overline{\mathsf{U}}\right]_{0.5}=\left\{\bar{u}\in\overline{\mathsf{U}}\,\,|\,\,\bar{u}=0.5k,~k\in\Ze\right\}$. We work on the subset $\mathsf{D}=[-1,~1]\times[-1,~1]$ of the state space $\Sigma$. For a given precision\footnote{The parameter $\varepsilon$ is the maximum error between a trajectory of the control system and its corresponding trajectory from the finite abstraction at times $k\tau$, $k\in\N_0$, with respect to the Euclidean metric.} $\varepsilon=0.1$ and using properties (i), (ii), and (iii) of $\widehat{V}$, we conclude that $\mathsf{D}$ should be quantized with resolution of $\eta=0.009$, using the results of Theorem 4.1 in \cite{girard2}. The state set of $S(\Sigma)$ is $[\mathsf{D}]_{\eta}=\left\{x\in\mathsf{D}\,\,|\,\,x_i=k_i\eta,~k_i\in\Ze,~i=1,2\right\}$. It can be readily seen that the set $[\mathsf{D}]_{\eta}$ is finite. The computation of the finite abstraction $S(\Sigma)$ was performed using the tool $\textsf{Pessoa}$ \cite{pessoa}. 
Using the computed finite abstraction, we can synthesize controllers, providing $\bar\upsilon$ in (\ref{law5}), satisfying specifications difficult to enforce with conventional controller design methods. Here, our objective is to design a controller navigating the trajectories of $\Sigma$, equipped with the control input $\upsilon$ in (\ref{law5}), to reach the target set $W=[-0.05,~0.05]\times[-0.05,~0.05]$, indicated with a red box in Figure \ref{fig1}, while avoiding the obstacles, indicated as blue boxes in Figure \ref{fig1}, and remain indefinitely inside $W$. Furthermore, we assume that the controller is implemented on a microprocessor, which is executing other tasks in addition to the control task. We consider a schedule with epochs of three time slots in which the first slot is allocated to the control task and the other two to other tasks. A time slot refers to a time interval of the form $[k\tau,(k+1)\tau[$ with $k\in\N$ and where $\tau$ is the sampling time. Therefore, the microprocessor schedules is given by (depending on the initial slot): 
$$
\mathsf{|auu|auu|auu|auu|auu|auu|auu|}\cdots,
$$
$$
\mathsf{|uua|uua|uua|uua|uua|uua|uua|}\cdots,
$$
$$
\mathsf{|uau|uau|uau|uau|uau|uau|uau|}\cdots,
$$
where $\mathsf{a}$ denotes a slot available for the control task and $\mathsf{u}$ denotes a slot allotted to other tasks. We assume that in unallocated time slots, the input $\bar\upsilon$ is identically zero. The schedulability constraint on the microprocessor can be represented by the finite system in Figure \ref{auto}.
\begin{figure}[t]
\begin{center}
\begin{tikzpicture}[shorten >=1pt,node distance=2cm,auto]
\tikzstyle{state}=[state with output]
\tikzstyle{every state}=[draw=brown!100,very thick,fill=brown!50,minimum size=0.8cm]
\node[state,initial,initial text=,initial where=above] (x1) {$q_{1}$ \nodepart{lower} {\textsf{a}}};
\node[state,initial,initial text=,initial where=above] (x2) [right of=x1] {$q_{2}$ \nodepart{lower} {\textsf{u}}};
\node[state,initial,initial text=,initial where=above] (x3) [right of=x2] {$q_{3}$ \nodepart{lower} {\textsf{u}}};
\path[->] (x1) edge node {} (x2)
(x2) edge node {} (x3)
(x3) edge [bend left] node {} (x1);
\end{tikzpicture}
\end{center}
\caption {Finite system describing the schedulability constraints. The lower part of the states are labeled with the outputs $\textsf{a}$ and $\textsf{u}$ denoting availability and unavailability of the microprocessor, respectively.}\label{auto}
\end{figure}
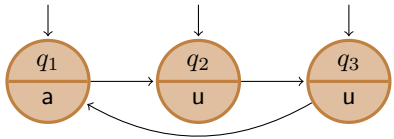

A controller, providing $\bar\upsilon$ in (\ref{law5}) and enforcing the specification has been designed by using standard algorithms from game theory, implemented in $\textsf{Pessoa}$, where the finite system is initialized from state $q_2$, see second sequence above. In Figure \ref{fig1}, we show the closed-loop trajectories of $\Sigma$, equipped with the control input $\upsilon$ in (\ref{law5}) (including the additional controller for $\bar\upsilon$) and stemming from the initial conditions $[0.8,~0.9]$ and $[-0.8,~-0.9]$. It is readily seen that the specifications are satisfied. In Figure \ref{fig2}, we show the evolution of the input signal $\bar\upsilon$ in (\ref{law5}) corresponding to the two initial conditions. It can be easily seen that the schedulability constraint is also satisfied, implying that the control input $\bar\upsilon$ is identically zero at unallocated time slots.

\begin{figure*}[t]
  \centering\includegraphics[width=15cm]{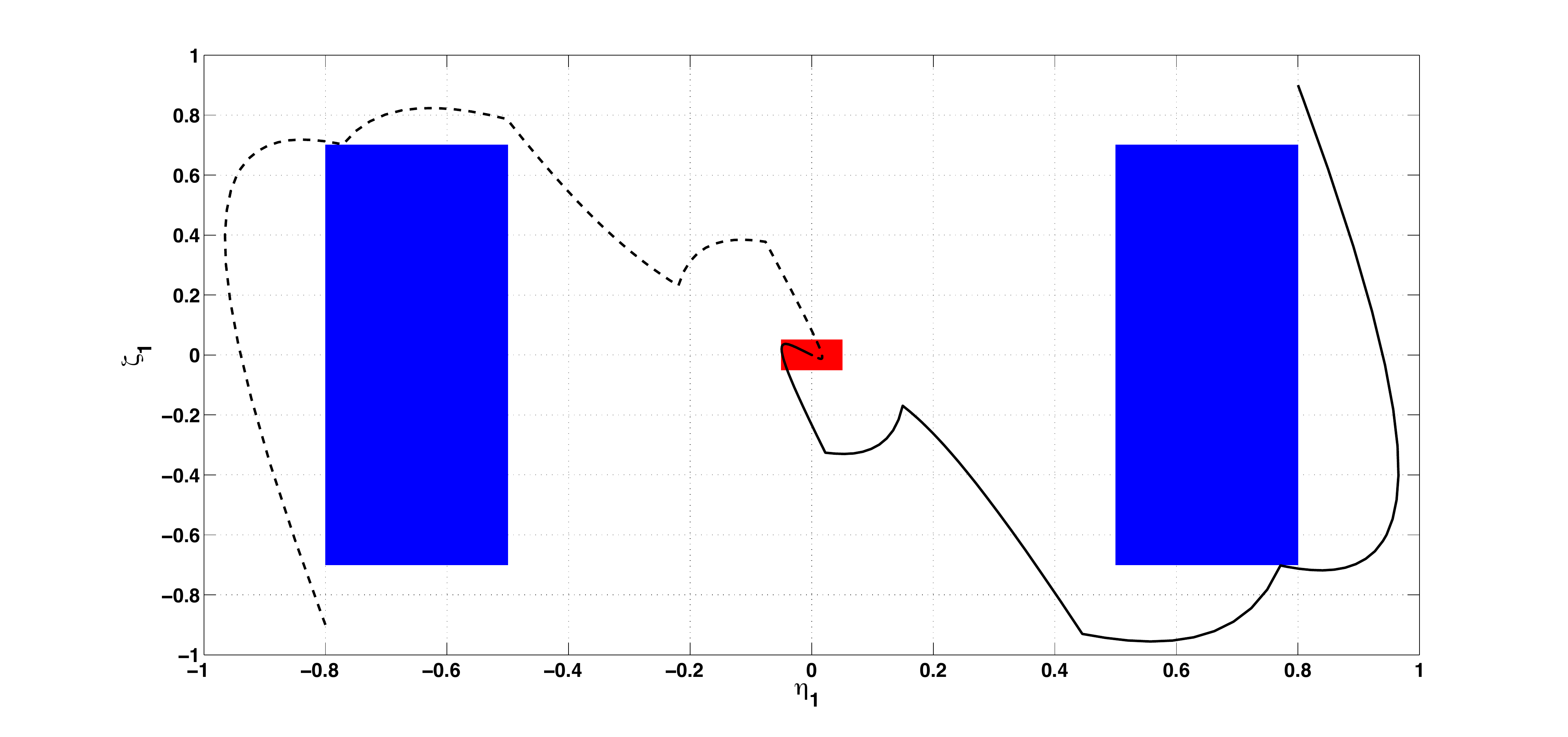}
  \caption{Evolutions of the closed-loop system with initial conditions $(0.8,~0.9)$, and $(-0.8,~-0.9)$.}
\label{fig1}
\end{figure*}

\begin{figure*}[t]
  \centering\includegraphics[width=15cm]{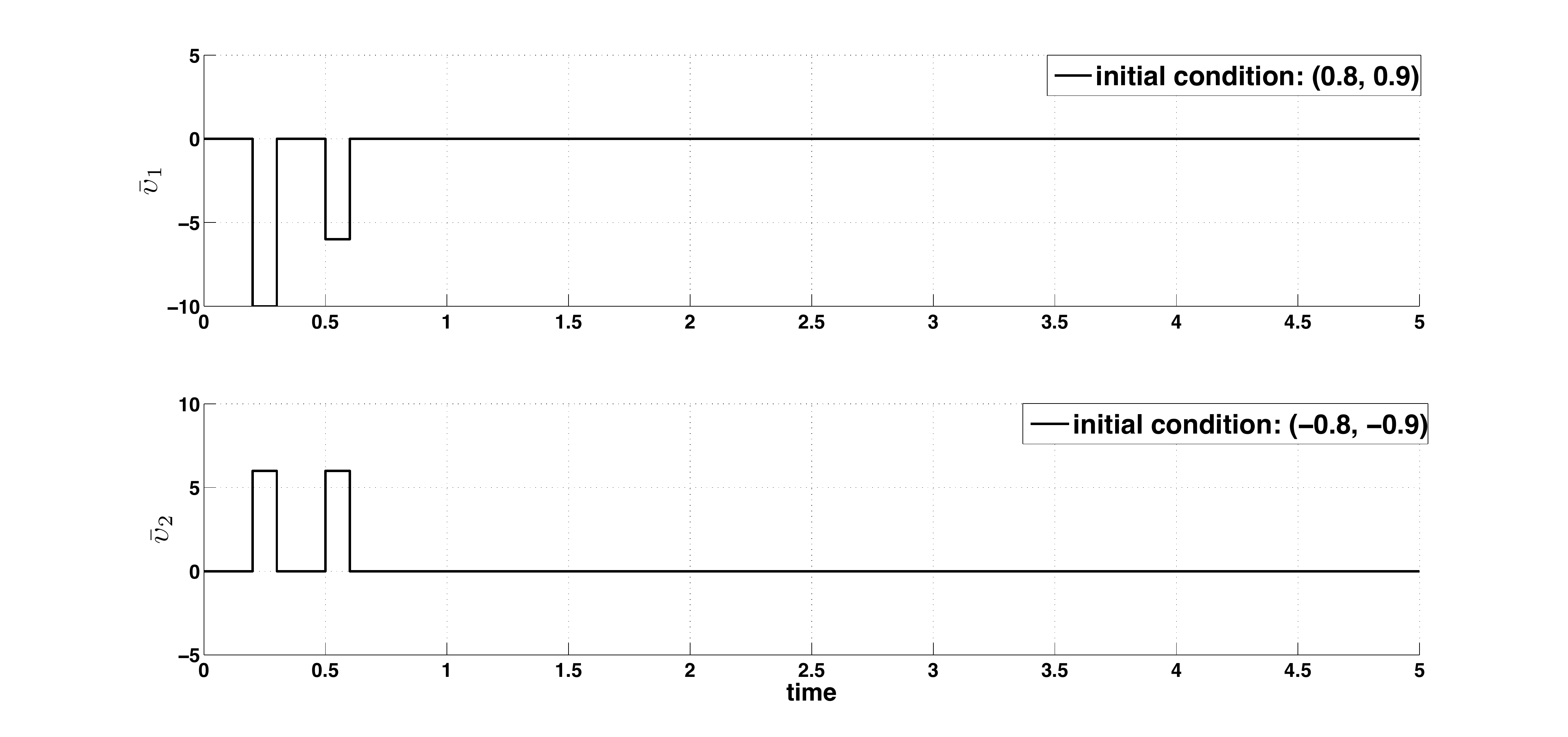}
  \caption{Evolutions of the input signals $\bar\upsilon_1$ and $\bar\upsilon_2$, corresponding to initial conditions $(0.8,~0.9)$, and $(-0.8,~-0.9)$, respectively.}
\label{fig2}
\end{figure*}

Resuming, the explicit availability of an incremental Lyapunov function let us to use the results in \cite{girard2} to construct a finite abstraction $S(\Sigma)$ for the control system $\Sigma$ in (\ref{example}), equipped with the control input in (\ref{law5}). This finite abstraction allowed us to use tools developed in computer since to synthesize a controller satisfying some logic specifications difficult to enforce using conventional controller synthesis methods. 
 
\section{Discussion}
In this paper we provided the characterizations of incremental stability, defined in \cite{majid1}, in terms of existence of incremental Lyapunov functions, defined in \cite{majid4}. We also provided sufficient conditions for incremental stability in terms of contraction metrics. Moreover, we developed a backstepping procedure to design controllers enforcing incremental input-to-state stability (or contraction properties) for the resulting closed-loop system. The proposed approach in this paper generalizes the work in \cite{jouffroy1,sharma,sharma1,majid1,majid4} by being applicable to larger classes of (not necessarily smooth) control systems and the work in \cite{pavlov} by enforcing incremental input-to-state stability rather than input-to-state convergence. Moreover, in contrast to the proposed backstepping design approach in \cite{pavlov}, here we provided a way of constructing incremental Lyapunov functions, which are known to be a key tool in the analysis provided in \cite{girard2,girard3,julius,kapinski}. As we showed in the example, the explicit existence of an incremental Lyapunov function helps us to use the results in \cite{girard2} to construct a finite bisimilar abstraction for a resulting incrementally stable closed-loop (non-smooth) control system.

\section{Acknowledgements}
The authors would like to thank Paulo Tabuada, Giordano Pola, and Manuel Mazo Jr. for their fruitful collaborations on the proof of Theorem \ref{theoremISS} and Lemma \ref{lemma20}. 

\bibliographystyle{alpha}
\bibliography{reference}
\end{document}